\documentclass[12pt]{article}
\usepackage{mlmodern}

\usepackage{pdfrender,xcolor}
\usepackage{graphicx,color}
\usepackage{amsmath,amsfonts,amssymb,amsthm,MnSymbol}
\usepackage{bbm}
\usepackage[hidelinks,colorlinks=true,linkcolor=blue,citecolor=blue]{hyperref}
\usepackage{enumerate}
\usepackage{algorithm}
\usepackage{algpseudocode}
\usepackage{soul}
\usepackage{rsfso}
\usepackage{refcheck}
\usepackage{multirow}

\usepackage{booktabs}
\usepackage{multirow}
\usepackage{graphicx}
\usepackage{adjustbox}
\usepackage{subcaption}
\usepackage{caption}

\pagestyle{plain}
\setlength{\topmargin}{-0.75in}
\setlength{\oddsidemargin}{0.1in}
\setlength{\evensidemargin}{0.0in}
\setlength{\textwidth}{160mm}
\setlength{\textheight}{240mm}

\setlength{\parskip}{1.0ex}
\usepackage{natbib}
\bibliographystyle{ims}

\newcommand{\E}{\mathbb{E}}
\newcommand{\R}{\mathbb{R}}
\newcommand{\Prob}{\mathbb{P}}
\newcommand{\Z}{\mathbb{Z}}

\providecommand{\abs}[1]{\lvert#1\rvert}
\providecommand{\Babs}[1]{\Big\lvert#1\Big\rvert}
\providecommand{\babs}[1]{\,\big\lvert#1\big\rvert}

\newcommand{\eq}[1]{(\ref{eq:#1})}

\newcommand{\thm}[1]{Theorem~\ref{thm:#1}}

\newcommand{\pro}[1]{Proposition~\ref{pro:#1}}

\newcommand{\ass}[1]{Assumption~\ref{ass:#1}}

\newcommand{\sectn}[1]{Section~\ref{sec:#1}}

\newcommand{\pend}{\hfill \thicklines \framebox(6.6,6.6)[l]{}}

\newenvironment{proof*}[1]{\noindent {\sc  #1} \rm}{\pend}

\newtheorem{theorem}{Theorem}[section]
\newtheorem{lemma}{Lemma}[section]
\newtheorem{assumption}{Assumption}[section]
\newtheorem{proposition}{Proposition}[section]

\newcommand{\setsection}[2] {
	\setcounter{section}{#1}
	\setcounter{subsection}{0}
	\setcounter{equation}{0}
	\setcounter{conjecture}{0}
	\setcounter{assumption}{0}
	\setcounter{question}{0}
	\setcounter{definition}{0}
	\setcounter{theorem}{0}
	\setcounter{corollary}{0}
	\setcounter{lemma}{0}
	\setcounter{proposition}{0}
	\setcounter{remark}{0}
	\setcounter{appen}{0}
	\setsection*{\large \bf \thesection. #2}}

\begin{document}
	%\pdfrender{StrokeColor=black,TextRenderingMode=2,LineWidth=0.2pt}
	\title{\bf \Large Asymptotic product-form steady-state for
		generalized Jackson networks in multi-scale heavy traffic}
	
	%%%%%%%%%%%%%%
	%   AUTHORS  %
	%%%%%%%%%%%%%%
	\author{J.G. Dai\\Cornell University\\ \and Peter Glynn\\ Stanford University \\  \and Yaosheng Xu \\ University of Chicago}
	\date{\today}
	%\date{}
	
	\maketitle
	\begin{abstract}
		We prove that under a multi-scale heavy traffic
		condition, the stationary distribution of the scaled queue
		length vector process in any generalized Jackson network has a product-form
		limit.  Each component in the product form follows an exponential
		distribution, corresponding to the Brownian approximation of a
		single station queue. The ``single station'' can be constructed
		precisely and its parameters have a good intuitive interpretation. 
	\end{abstract}
	
	\begin{quotation}
		\noindent {\bf Keywords}: single-class queueing networks, product-form stationary distributions,  reflected Brownian motions, interchange of limits,  heavy traffic approximation, performance analysis
		
		\medskip
		
		\noindent {\bf Mathematics Subject Classification}: 60K25, 60J27, 60K37
	\end{quotation}

	\section{Introduction}
	\label{sec:intro}

	\cite{Jack1957} introduced a class of first-come-first-serve (FCFS)
	queueing networks that are known today as open Jackson networks. The
	defining characteristics of a Jackson network are (a) all customers
	visiting a service station are homogeneous in terms of the service
	time distribution and the routing probabilities, and (b) all
	interarrival and service time distributions, referred to as the primitive distributions, are exponential. For a
	Jackson network with $J$ stations, the queue length vector process is a continuous
	time Markov chain (CTMC).  Jackson’s pioneering contribution is that
	when the traffic intensity at each station is less than one, the CTMC
	has a unique stationary distribution that is of product form, meaning
	that the steady-state queue lengths at the various stations in the
	network are independent.  This product-form result makes the
	computation of steady-state performance measures highly tractable.
	
	When the primitive distributions are general, the corresponding
	network is known as a generalized Jackson network (GJN). For a GJN,
	the product-form result no longer holds in
	general. %Starting from \cite{King1961a, King1962},
	Creating and justifying approximations for GJNs have been an active research area for more than 60 years.
	In this paper, we prove the following asymptotic
		product-form result for a GJN:
		%a first kind  in the last 60 years when $J\ge 2$:
		for a GJN with $J$ stations, satisfying
		the   multi-scale heavy traffic condition
		\begin{align} \label{eq:mscale}
		1\ll\frac{1}{1-\rho_1}\ll \frac{1}{1-\rho_2}\ll\cdots\ll\frac{1}{1-\rho_J},
		\end{align}
		the following approximation holds
		\begin{align}\label{eq:GJN}
		\Big( (1-\rho_1) Z_1, \ldots,
		(1-\rho_J) Z_J \Big)\approx \left(\rho_1d_1E_1 ,\ldots, \rho_Jd_JE_J\right),
		\end{align}
		where $E_1,\ldots, E_J$ are i.i.d. exponential random variables with mean $1$, and $(d_1, \ldots, d_J )>0$ is some deterministic vector. In (\ref{eq:GJN}), $\rho_j<1$ is the traffic intensity at station
		$j$ defined in (\ref{eq:rho}), $Z=(Z_1, \ldots, Z_J)$
		is the vector of steady-state queue length.
		%  that depends on interarrival and service time distributions, 
		% routing probabilities including 
		% $\rho=(\rho_1, \ldots, \rho_J)$ and 
		%   primitives,
		%   the symbol $\Rightarrow$ denotes convergence in distribution,
		The right side
		of (\ref{eq:GJN}) indicates a product-form distribution. Regarding the multi-scale heavy traffic condition (\ref{eq:mscale}), $1/(1-\rho_j)$ will be called
		the heavy traffic normalization factor for station $j$ and
		$a\ll b$ is interpreted as ``$b$ being much larger than $a$''.
		Condition (\ref{eq:mscale}) says that the
		heavy traffic normalization factors for different stations are
		all large, but \emph{of widely separated magnitudes}.  A precise
		definition of the multi-scale heavy traffic condition will be
		given in (\ref{htpar}) of Section~\ref{sec:main}, and a precise version
		of (\ref{eq:GJN}) in terms of convergence in distribution
		will be stated in Theorem~\ref{thm:main}.

	 Our result is the first of its kind. It has many potential
		ramifications.  An obvious benefit of our result is that the
		right side of (\ref{eq:GJN}) offers a scalable method to
		approximately evaluate the steady-state performance of a GJN. In
		Section~\ref{sec:num}, we demonstrate that approximations based
		on (\ref{eq:GJN}) can be accurate sometimes even when
		the load-separation condition in (\ref{eq:mscale}) is violated. Our asymptotic product-form result has been
		extended to multiclass queueing networks operating under static
		buffer priority service policies in \cite{DaiHuo2024}. An active research
		direction is to explore whether the asymptotic product-form phenomenon holds for a
		much broader class of stochastic systems including stochastic
		processing networks as defined in \cite{DaiHarr2020}. 
		These exciting developments have the potential to radically speed up the
		performance analysis and optimization of real-world complex
		systems.

	When a GJN is in conventional heavy traffic, namely all the
	stations have roughly equal heavy traffic normalization factors,
	\cite{Reim1984} and \cite{John1983} prove that the scaled queue
	length vector process converges to a multi-dimensional reflecting
	Brownian motion (RBM).  In their conventional heavy traffic limit
	theorem, one common scaling is applied (based on the heavy traffic
	normalization factor for an arbitrarily chosen station) across all
	the different stations.  Their RBMs were first introduced in
	\cite{HarrReim1981}, and the stationary distributions of these RBMs
	were shown to exist in \cite{HarrWill1987} when $\rho_j<1$ for each
	station $j$. These pioneering works have propelled
	the development of Brownian models for queueing network performance
	analysis and control in the last forty years; see, for example,
	\citet{Harr1988}, \citet{HarrNguy1993}, \cite{ChenYao2001},
	\cite{KangKellyLeeWilliams2009}, \cite{AtaHarrSi2024}, and a survey paper \cite{Will2016}.
	
	The stationary distribution of a multi-dimensional RBM is typically
	not of product-form except under a ``skew-symmetry'' condition
	characterized in \cite{HarrWill1987a}; see, e.g., \cite{Harr1978}.
	Numerical algorithms have been developed for computing the
	stationary distributions of RBMs in low dimensions; see
	\cite{DaiHarr1992} and \cite{ShenChenDaiDai2002}.  It is likely that these algorithms, even when convergent, do not scale well when the number $J$ of stations gets large. Moreover, a recent simulation-based algorithm proposed in \cite{BlanChenSiGlynn2021} has a complexity that scales linearly in dimension $J$ when the RBM data satisfies (a) a uniform (in $J$) contraction condition and (b) a uniform (in $J$) stability condition. The requirement of uniformity in $J$ is a strong condition when $J$ is large. In other words, the complexity guarantee of their work may not cover many real-world high-dimensional network examples. On the other hand, the approximation based on (\ref{eq:GJN}) can be easily computed even when $J$ is (very) large. The approximation (\ref{eq:GJN}) can be either used directly to approximate the original networks or it can be used at a ``pre-conditioner" for numerical algorithms, such as that of \cite{DaiHarr1992}. Specifically, their algorithm computes a density relative to a reference measure which must be chosen well in order to guarantee convergence. The approximation (\ref{eq:GJN}) is a possible choice for that reference measure; its effectiveness will be explored elsewhere.

	Our proof of Theorem~\ref{thm:main} employs the BAR-approach recently developed in
	\cite{BravDaiMiya2017,BravDaiMiya2024} for continuous-time queueing networks with
	general primitive distributions.  The BAR-approach was developed initially
	for providing an  alternative to the limit interchange procedure in
	\cite{GamaZeev2006}, \cite{BudhLee2009}, \cite{Gurv2014a}, and
	\cite{YeYao2018}, justifying the convergence of the prelimit
	stationary distribution to the RBM stationary distribution under
	conventional heavy traffic.
	% As a by-product of the proof of our
	% main theorem, we demonstrate that the BAR-approach can be a natural
	% approach to discover and justify results of the type discussed in
	% this paper.
	Our proof of Theorem~\ref{thm:main} relies heavily on a steady-state moment
	bound for the scaled queue length vector. This bound is proved in a
	companion paper \cite{GuanChenDai2023}.
	
	The parameter $d_j$ in (\ref{eq:GJN}) has an explicit formula
	(\ref{eq:dj}) in terms of the first two moments of the primitive
	distributions and the routing probabilities. The distribution
	of $d_j E_j$ corresponds to the stationary distribution of a
	one-dimensional RBM approximating a single station queue. The
	“single station” can be constructed precisely and has a good
	intuitive interpretation as described immediately below the
	statement of Theorem~\ref{thm:main}, consistent with the bottleneck
	analysis advanced in \cite{ChenMand1991}.  Our work contributes to
	a line of research pioneered by \cite{Whit1983} on decomposition
	approaches for the performance analysis of GJNs, and partly justifies
	the sequential bottleneck decomposition (SBD) heuristic proposed in
	\cite{DaiNguyReim1994}. For a recent survey of decomposition
	approaches including ``robust queueing approximations'',
	readers are referred  to \cite{WhittYou2022}. We leave it as
	 future work to compare the approximation (\ref{eq:GJN}) with the extensive numerical work in \cite{WhittYou2022}.

	We note that when the normalization factors $(1-\rho_j)^{-1}$
        are widely separated, this implies that in heavy traffic, the
        diffusion temporal scalings $(1-\rho_j)^{-2}$ that dictate the
        time scales governing the dynamics at each station are also
        widely separated. It is this ``time scale separation'' that
        leads to the independence across stations. See Theorem~1 and
        Proposition 2 of \cite{ChenDaiGuang2025} for precise statements
        of the process-level independence.

	%  By developing an elaborate limit-interchange
	% procedure,  \cite{GamaZeev2006} proved that the stationary
	% distribution of a generalized Jackson network is well approximated by
	% that of the corresponding RBM in the same “common scaling” heavy
	% traffic environment.

	\section{Generalized Jackson networks}\label{sec:gjn}
	We first define an open generalized Jackson network, following closely
	Section 2.1 of \cite{BravDaiMiya2017} both in terms of terminology
	and notation. There are $J$ single-server stations in the
	network. Each station has an infinite-size  buffer that holds jobs waiting for service. Each
	station potentially has an external job arrival process.  An arriving job is
	processed at various stations in the network, one station at a time,
	before eventually exiting the network. When a job arrives at a station
	and finds the server busy processing another job, the arriving job
	waits in the buffer until its turn for service. After being processed
	at one station, the job either is routed to another station for
	further processing or exits the network, independent of all previous
	history.  Jobs at each station, either waiting or in service, are
	homogeneous in terms of service times and routing probabilities.  Jobs
	at each station are processed following the first-come-first-serve
	(FCFS) discipline.

	Let ${\cal J}=\{1, \ldots, J\}$ be the set of stations. The following
	notational system follows \cite{BravDaiMiya2024} closely.  For each
	station $j\in\cal{J}$, we are given two nonnegative real numbers
	$\alpha_j\in \R_+$ and $\mu_j\in \R_+$, two sequences of i.i.d.\ random
	variables $T_{e,j} = \{T_{e,j}(i), i\in\mathbb{N}\}$ and
	$T_{s,j}=\{T_{s,j}(i),i\in\mathbb{N}\}$, and one sequence of i.i.d.\
	random vectors $\Phi_j=\{\Phi_j(i), i\in \mathbb{N}\}$, where $\mathbb{N}$ denotes the set of natural numbers. We allow the possibility that some stations may not have external arrivals by assuming 
		$\alpha_j=0$.  If such a station $j$ exists, all terms associated with external arrivals at station $j$ can be removed without ambiguity. To maintain clean notation, we assume each station $j\in\cal{J}$ has external arrival. We assume
	$T_{e,1}$, \ldots, $T_{e,J}$, $T_{s,1}$, \ldots, $T_{s,J}$, $\Phi_1$,
		$\ldots$, $\Phi_J$ are independent.  Following \cite{BravDaiMiya2024},
	we assume $\E[T_{e,j}(1)] = 1$, $\E[T_{s,j}(1)] = 1$ such that
	$T_{e,j}(1)$ and $T_{s,j}(1)$ are unitized. 
	%Let
	%\begin{displaymath}
	%\mathcal{E} =\{j\in \mathcal{J}: \alpha_j>0\},
	%\end{displaymath}
	 For $j\in \cal{J}$,
	we use
	$T_{e,j}(i)/\alpha_j$ to represent the interarrival time between the
	$(i-1)$th and $i$th external arriving jobs at station $j$, and
	$T_{s,j}(i)/\mu_j$ to denote the $i$th job service time at station
	$j$.  Therefore, $\alpha_j$ is interpreted as the external arrival
	rate and $\mu_j$ as the service rate at station $j$. The random vector
	$\Phi_j(1)$ takes vector $e^{(k)}$ with probability $P_{jk}$ for
	$k\in \mathcal{J}$ and takes vector $e^{(0)}$ with probability
	$P_{j0}\equiv 1-\sum_{k\in \mathcal{J}}P_{jk}$, where $e^{(k)}$ is the
	$J$-vector with component $k$ being 1 and all other components being
	$0$, and $e^{(0)}$ is the $J$-vector of zeros. When
	$\Phi_j(i)=e^{(k)}$,  the $i$th departing job from station
	$j$ goes next to station $k\in \mathcal{J}$. When
	$\Phi_j(i)=e^{(0)}$, the job exits the network.  We assume the
	$J\times J$ routing matrix $P=(P_{ij})$ is transient, which is
	equivalent to requiring that $(I-P)$ is invertible.  This assumption ensures
	that each arriving job will eventually exit the network, and thus the
	network is known as an open generalized Jackson network.
	
	Denote by $c^2_{e, j}=\operatorname{Var}\left(T_{e, j}(1)\right)$ and
	$c^2_{s, j}=\operatorname{Var}\left(T_{s, j}(1)\right)$.  It is known
	that $c^2_{e, j}$ and $c^2_{s, j}$ are the squared coefficients of
	variation for interarrival time and service time, respectively.

	\noindent\textbf{Traffic Equation.} Given the queueing network, let
	$\lambda$ be the unique solution to the traffic equation
	\begin{equation}\label{trafficeq}
	\lambda = \alpha + P' \lambda,
	\end{equation}
	where $P'$ is the transpose of $P$. Here $\lambda_j$ is referred to as
	the nominal total arrival rate to station $j$, including the external
	arrival and the arrivals from other stations. For each station
	$j\in \mathcal{J}$, define the traffic intensity at station $j$ by
	\begin{align}\label{eq:rho}
	\rho_j=\lambda_j/\mu_j.          
	\end{align}
	We assume that
	\begin{align*}
	%  \label{eq:rholessone}
	\rho_j< 1, \quad j\in \mathcal{J}.
	\end{align*}
	
	\noindent\textbf{Markov process.} For $t\geq0$ and $j\in\cal{J}$, let
	$Z_j(t)$ be the number of jobs at station $j$, including possibly one
	being in  service. Let $R_{e,j}(t)$ be the residual time until the
	next external arrival to station $j$. Let $R_{s,j}(t)$ be the residual
	service time for the job being processed in the station $j$. If
	$Z_j(t)=0$, the residual service time is the service time of the next job
	at station $j$, meaning $R_{s,j}(t) = T_{s,j}(i)$ for an appropriate
	$i\in\mathbb{N}$. We write $Z(t), R_e(t), R_s(t)$ as the vectors of
	$Z_j(t), R_{e,j}(t)$, and $R_{s,j}(t)$,
	respectively. Define $$X(t)=(Z(t), R_e(t), R_s(t))$$ for each
	$t\geq 0$. Then $\{X(t), t\geq 0\}$ is a Markov process with respect
	to the filtration
	$ \mathbb{F}^{X} \equiv\left\{\mathcal{F}_{t}^{X} ; t \geq 0\right\}$
	defined on the state space
	$\Z_{+}^{J} \times \R_{+}^{J} \times
	\R_{+}^{J}$, where
	$\mathcal{F}_{t}^{X}=\sigma(\{X(u) ; 0 \leq u \leq t\}) $.
	
	% When the traffic condition (\ref{eq:rholessone}) is satisfied
	% and the interarrival time distributions satisfy the
	% ``unbound'' and ``spread-out'' conditions, it follows from
	% \cite{Dai1995} that the Markov process $\{X(t), t\ge 0\}$ is
	% positive Harris recurrent with a unique stationary
	% distribution. We use $X=(Z, R_e, R_s)$ to denote the random
	% vector that has the stationary distribution.

	\section{Multi-scale heavy traffic and the  main result}
	\label{sec:main}
	We consider a family of generalized Jackson networks indexed
	by $r\in(0,1)$.  We denote by $ \mu^{(r)}_j$ the service
	rate at station $j$ in the $r$th network. To keep the
		presentation clean, we assume the service rate vector $\mu^{(r)}$ is the
		only network parameter that depends on $r\in (0,1)$, and that the service time $T_{s,j}/\mu^{(r)}_j$ depends on $r$ solely through the service rate.  The arrival rate $\alpha$, the
	unitized interarrival and
	service times, and routing vectors do not depend on $r$. Consequently,
	the nominal total arrival rate $\lambda$ does not depend on $r$. 
	
	\begin{assumption}[Multi-scale heavy traffic]\label{ass:mscale}
		We assume there is a constant $b_j>0$ for each station $j\in \mathcal{J}$ and  for $r\in (0, 1)$ 
		\begin{equation}\label{htpar}
		\begin{aligned}
		& \mu^{(r)}_j  - \lambda_j = b_j r^j,\quad j\in\mathcal{J}.
		\end{aligned}
		\end{equation}
	\end{assumption}
	Condition (\ref{htpar}) is equivalent to %, $\mu^{(r)}_j\to \lambda_j$ and
	\begin{align*} % \label{mu_r}
	1- \rho^{(r)}_j =b_j r^j/\mu^{(r)}_j, \quad j\in\mathcal{J}.
	\end{align*}
	Condition  (\ref{htpar}) implies $\mu^{(r)}_j\to \lambda_j$ and
	\begin{align*}
	1/(1-\rho_1^{(r)})= \frac{\mu^{(r)}_1}{b_1}\frac{1}{r} \to \infty, \quad \frac{1/(1-\rho_{j+1}^{(r)})}{1/(1-\rho_{j}^{(r)})} = \frac{b_j\mu^{(r)}_{j+1}}{b_{j+1
		}\mu^{(r)}_j} \frac{1}{r}\to\infty, \quad j=1, \ldots, J-1.
	\end{align*}
	as $r\downarrow 0$, which is consistent with (\ref{eq:mscale}).  We
	call condition (\ref{htpar}) the multi-scale heavy traffic condition.
	Condition (\ref{htpar}) is one way to achieve ``widely separated
	traffic normalization factors''. Clearly, it is not the only way.
	We choose to use the form in condition (\ref{htpar}) to make our
	proofs clean. We leave it to future research to understand the most general form
	of load-separation under which Theorem~\ref{thm:main} still holds. In the rest of this paper, we choose $b_j=1$ for $j\in \mathcal{J}$. When  $b_j\neq 1$, all
	the proofs remain essentially the same.

	\begin{assumption}[Moment]\label{ass:moment}
		We assume that there exists a $\delta_0>0$ such that the primitive
		distributions have $J+1+\delta_0$ moments. Namely,
		\begin{align} \label{eq:moment}
		\E\big[\big(T_{e,j}(1)\big)^{J+1+\delta_0}\big] <\infty , ~
		\E\big[\big(T_{s,j}(1)\big)^{J+1+\delta_0}\big] <\infty, ~ j\in
		\mathcal{J}.
		\end{align}
              \end{assumption}
              Condition~\eq{moment} is stronger than the
                $(2+\delta_0)$-th moment condition in
                \cite{BravDaiMiya2017}. Assumption \ref{ass:moment}
              is used to derive in Section~\ref{sec:abar} the
              asymptotic basic adjoint relationship (\ref{eq:abar})
              and Taylor expansions (\ref{eq:eta-t-taylor}) and
              (\ref{eq:xi-t-taylor}). These three equations are
              critical in the proof of Theorem~\ref{thm:main}. It
                is an open problem to explore weaker moment conditions
                under which Theorem~\ref{thm:main} holds.

              The following assumption is mild.  \cite{Dai1995} proves
              that the following assumption holds when
              $\rho_j^{(r)}<1$ for each station $j$, and the
              interarrival times are ``unbounded and spread-out''.
              
	\begin{assumption}[Stability]\label{ass:stable}
		For each $r\in(0,1)$, the Markov process $\{X^{(r)}(t), t\ge 0\}$ is positive Harris recurrent and it has a unique stationary distribution.
	\end{assumption}
	Under \ass{stable},
	we use
	\begin{align*}
	%\label{ss}
	X^{(r)}=\big(Z^{(r)}, R_{e}^{(r)}, R_{s}^{(r)}\big)
	\end{align*}
	to denote the steady-state random vector that has the stationary distribution. It is well known that
	\begin{align}\label{eq:idle}
	\Prob\{Z^{(r)}_j = 0\} = 1-\rho^{(r)}_j=r^j/\mu^{(r)}_j, \quad j\in \mathcal{J}.
	\end{align}
	See, for example, (A.11) and (A.13) of \cite{BravDaiMiya2017} for a proof.
	Therefore, under multi-scale heavy traffic condition (\ref{htpar}), each server
	approaches $100\%$ utilization. 
	
	The following is the main result of this paper.

	\begin{theorem}\label{thm:main}
		Assume Assumptions~\ref{ass:mscale}-\ref{ass:stable} all hold.
		Then
		\begin{align}\label{eq:joint-conv}
		\Big((1-\rho^{(r)}_1)Z_1^{(r)}, \ldots, (1-\rho^{(r)}_J)Z_J^{(r)}\Big) \Rightarrow  \left(d_1E_1  ,\ldots,  d_JE_J\right), \quad \text{ as } r\to 0,
		\end{align}
		where ``$\Rightarrow$'' stands for convergence in
		distribution.  Furthermore, $E_1, \ldots, E_J$
		are independent exponential random
		variables with mean $1$, and for each station $j\in\mathcal{J}$,  
		\begin{align}
		d_j =&\frac{\sigma^2_j}{2\lambda_j(1-w_{jj})}, \quad \text{ and }\label{eq:dj} \\
		\sigma_j^2=& \sum_{i< j} \alpha_i\bigl ( w_{ij}^2c_{e,i}^2 + w_{ij}(1-w_{ij}) \bigr ) + \alpha_jc_{e,j}^2 +  \sum_{i>j}\lambda_i \bigl ( w_{ij}^2c_{s,i}^2 + w_{ij}(1-w_{ij})\bigr )\nonumber\\
		&  + 
		\lambda_j\bigl ( c_{s,j}^2(1-w_{jj})^2 + w_{jj}(1-w_{jj})  \bigr ),
		\label{eq:sigma}
		\end{align}
		where $(w_{ij})$ is computed from the  routing matrix $P$ through a formula given at the end this section.
	\end{theorem}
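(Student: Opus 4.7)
My plan is to follow the BAR (basic adjoint relationship) strategy that the introduction signals, and which has been developed in \cite{BravDaiMiya2017,BravDaiMiya2024}. The BAR characterizes the stationary distribution of $X^{(r)}$ through the identity $\E[\mathcal{A}^{(r)} f(X^{(r)})] = 0$ for a sufficiently rich class of test functions $f$, where $\mathcal{A}^{(r)}$ is the (piecewise-deterministic-Markov) generator of the process. The main step is to apply this identity to exponential test functions of the scaled queue lengths, of the form
\begin{equation*}
f_\theta(z,r_e,r_s) = \exp\!\Big(-\sum_{j\in\mathcal{J}} \theta_j (1-\rho_j^{(r)}) z_j\Big) \cdot g_\theta^{(r)}(r_e,r_s),
\end{equation*}
where $g_\theta^{(r)}$ is a correction built from the residual-time coordinates, chosen so that the jump terms and the deterministic-drift terms in the BAR can be combined cleanly (this is the ``$\eta$''/``$\xi$'' decomposition already hinted at by equations \eqref{eq:eta-t-taylor}--\eqref{eq:xi-t-taylor} in the companion setup).

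The second step is to Taylor expand the exponentials in the jump terms to second order in the scaling parameter $r$. Because $(1-\rho_j^{(r)}) = r^j/\mu_j^{(r)}$, the $j$th coordinate enters at order $r^j$; at a departure from station $i$ that is routed to station $k$, the increment of the test function produces terms of order $r^i$, $r^k$, $r^{\min(i,k)+\max(i,k)}=r^{i+k}$, etc. Summing over all event types and using the traffic equation \eqref{trafficeq} together with the idle-probability identity \eqref{eq:idle} should cause the first-order terms in each $r^j$ to cancel exactly; the surviving second-order terms are precisely those that assemble into the variance parameter $\sigma_j^2$ in \eqref{eq:sigma}, with the cross-station contribution weighted by the $(w_{ij})$ matrix that encodes the routing. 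The steady-state moment bounds from \cite{GuanChenDai2023} are what allows the higher-order Taylor remainders to be controlled uniformly in $r$; this is also how Assumption~\ref{ass:moment} enters, through the $J+1+\delta_0$ moments needed to bound the tails of the residual-time stationary laws.

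The third step is the heart of the argument, and where the multi-scale structure really matters. After Taylor expansion, the BAR becomes an identity that the joint Laplace transform $\psi^{(r)}(\theta) := \E[\exp(-\sum_j \theta_j(1-\rho_j^{(r)})Z_j^{(r)})]$ satisfies, plus an error that tends to $0$ as $r\downarrow 0$. Because $(1-\rho_j^{(r)})/(1-\rho_k^{(r)}) \to 0$ whenever $j<k$, when I take $r\downarrow 0$ the equation in the limit decouples station by station: the contribution of station $j$ to the derivative in $\theta_j$ involves only $\theta_j$ itself (cross terms with $\theta_k$, $k\neq j$, are negligible by the scale separation). Any subsequential weak limit $\psi(\theta)$ of $\psi^{(r)}(\theta)$, whose existence is guaranteed by tightness from the moment bound, must therefore satisfy, for each $j$, an ODE of the form $\theta_j \psi(\theta) = 2\lambda_j(1-w_{jj})d_j \cdot \partial_{\theta_j}\psi(\theta) / \sigma_j^2 \cdot(\ldots)$ that forces $\psi(\theta) = \prod_j (1+d_j\theta_j)^{-1}$, i.e.\ the Laplace transform of the product of independent $\operatorname{Exp}(1/d_j)$ laws. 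Uniqueness of the limit then upgrades subsequential convergence to full convergence.

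The main obstacle, I expect, is the careful bookkeeping in the second and third steps: one has to identify which terms in the Taylor expansion contribute at order $r^{2j}$ for each fixed $j$ (this is what matches the $(1-\rho_j^{(r)})^2$ scaling coming from the differential in $\theta_j$), and which terms are lower-order and hence vanish after normalization. The cross-station terms at ``mixed'' orders $r^{i+j}$ with $i\neq j$ must be shown to drop out -- this is really the analytic manifestation of the time-scale separation referred to at the end of the introduction, and it is what forces independence in the limit. A secondary obstacle is ensuring that the correction term $g_\theta^{(r)}$ for the residual times is defined so that the BAR identity actually closes on the queue-length Laplace transform; this is the mechanism by which the $c_{e,j}^2$ and $c_{s,j}^2$ contributions enter $\sigma_j^2$ rather than being swept into error terms.
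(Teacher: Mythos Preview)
Your BAR-plus-Taylor framework is correct at the high level, and the paper follows exactly this skeleton. But your third step contains a genuine gap. With the naive choice $\theta_k = \eta_k(1-\rho_k^{(r)})\sim \eta_k r^k$ for \emph{all} $k$, the asymptotic BAR after Taylor expansion reads
\[
Q^*(\theta)\,\psi^{(r)}(\theta) + \sum_{i\in\mathcal J} (\mu_i^{(r)}-\lambda_i)\,\bar\xi_i(\theta)\bigl(\psi^{(r)}(\theta)-\psi_i^{(r)}(\theta)\bigr)=o(\cdot),
\]
where $\bar\xi_i(\theta)=\sum_k P_{ik}\theta_k-\theta_i$ and $Q^*$ is the quadratic form \eqref{eq:Q-s}. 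Both $Q^*(\theta)$ and the station-$1$ boundary term $(\mu_1^{(r)}-\lambda_1)\bar\xi_1(\theta)$ are of order $r^2$, so dividing by $r^2$ isolates station $1$. However, for $j\ge 2$ you cannot proceed: the station-$\ell$ boundary terms with $\ell<j$ are of order $r^{2\ell}$, which is \emph{larger} (not smaller) than the $r^{2j}$ you need, so they do not ``drop out by scale separation'' as you assert. The decoupling is not automatic.

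The paper's device, which your proposal is missing, is to fix $j$ and choose the lower components of $\theta^{(r)}$ so that $\bar\xi_\ell(\theta^{(r)})=0$ for all $\ell<j$: concretely, $\theta_k=\eta_k r^k$ for $k\ge j+1$, $\theta_j=\eta_j r^j+\frac{1}{1-w_{jj}}\sum_{i>j}w_{ji}\theta_i$, and $\theta_\ell=w_{\ell j}\theta_j+\sum_{i>j}w_{\ell i}\theta_i$ for $\ell<j$. This linear system is what \emph{defines} the weights $(w_{ij})$; they enter the proof through the choice of test function, not through automatic bookkeeping in the expansion. With this choice, the dominant surviving terms are at order $r^{2j}$, the limit $r^{-2j}Q^*(\theta^{(r)})\to \tfrac{1}{2}\sigma_j^2\eta_j^2$ holds (Lemma~\ref{lem:sigma}), and one obtains the iterative relation $\phi^{(r)}(0,\ldots,0,r^j\eta_j,\ldots,r^J\eta_J)-\frac{1}{1-d_j\eta_j}\phi^{(r)}(0,\ldots,0,r^{j+1}\eta_{j+1},\ldots,r^J\eta_J)\to 0$ (Proposition~\ref{lem:ite}), from which the product form follows by induction on $j$. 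Your subsequential-limit/ODE formulation is not needed; the argument closes directly once this choice of $\theta^{(r)}$ is made and the state-space-collapse lemmas restore $\phi^{(r)}$ from $\psi^{(r)}$.
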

	% In light of the results in literature, see for example
	% \cite{WhitYou2020}, the convergence of marginal
	% distribution $r^jZ_j^{(r)}\Rightarrow Z^*_j$ is not
	% surprising.
	As discussed in the introduction, the pre-limit stationary
	distribution (the left side of (\ref{eq:joint-conv})) is \emph{not} of
	product form, nor is stationary
	distribution of conventional heavy traffic limit \cite{Reim1984}.
        However, the multi-scale limits $E_1, \ldots, E_J$ are independent.
        \thm{main} suggests an obvious  approximation to the stationary distribution of the queue length vector process for a generalized Jackson network with widely separated values of $(\mu_j-\lambda_j)^{-1}$, $ j\in \mathcal{J}$. Our approximation is
	\begin{align}\label{eq:approx}
	\big( Z_1(\infty), Z_2(\infty), \ldots, Z_J(\infty)\big)
	\stackrel{d}{\approx}
   \left(\frac{\rho_1}{1-\rho_1}d_1E_1,\frac{\rho_2}{1-\rho_2}d_2E_2,\ldots,
	\frac{\rho_J}{1-\rho_J}d_JE_J\right),
	\end{align}
	where $(E_1, \ldots, E_J)$ is described in \thm{main}, and
	$\stackrel{d}{\approx}$ denotes ``has approximately the same distribution as'' (and carries no rigorous meaning beyond that imparted by \thm{main}).

	We now define the $J\times J$ matrix
	$w=(w_{ij})$ used in (\ref{eq:dj}) and (\ref{eq:sigma}).
	For each $j\in \mathcal{J}$,
	\begin{align}\label{eq:ww1}
	& ( w_{1j}, \ldots, w_{j-1,j})' = (I-P_{j-1})^{-1} P_{[1:j-1], j}, \\
	&  (w_{jj}, \ldots, w_{J,j})' = P_{[j:J],j} + P_{[j:J], [1:j-1]}(I-P_{j-1})^{-1} P_{[1:j-1], j},\label{eq:ww2}
	\end{align}
	where prime denotes the transpose, and, for $A, B\subseteq \mathcal{J}$, $P_{A, B}$ is the
	submatrix of $P$ whose entries are $(P_{k\ell})$ with $k\in A$
	and $\ell\in B$ with the following conventions: for $\ell\le k$,
	\begin{align*}
	[\ell:k]=\{\ell, \ell+1, \ldots, k\}, \quad  P_{A, k}=P_{A,\{k\}},\quad   P_{\ell, B}=P_{\{\ell\},B},  \quad P_{k}=P_{[1:k],[1:k]},
	\end{align*}
	and any expression involving $P_{A, B}$ is interpreted to be zero when  either $A$ or $B$ is the  empty set.
	The expressions in (\ref{eq:ww1}) and (\ref{eq:ww2}) may
	look complicated, but $(w_{ij})$ has the following
	probabilistic interpretation.  Recall that the routing
	matrix $P$ can be embedded within a transition matrix of a
	DTMC on state space $\{0\}\cup \mathcal{J}$ with $0$ being
	the absorbing state. For each $i\in \mathcal{J}$ and
	$j\in \mathcal{J}$, $w_{ij}$ is the probability that
	starting from state $i$, the DTMC will eventually visit
	state $j$ while avoiding visiting states $\{0, j+1, \ldots, J\}$.
	See more discussion of $(w_{ij})$ in Lemma~\ref{lem:w} in Appendix~\ref{sec:lem53}.

	The variance parameter $\sigma_j^2$ in (\ref{eq:sigma})
	has the following appealing interpretation. Consider a
	renewal arrival process $\{E(t), t\ge 0\}$ with arrival
	rate $\nu$ and squared coefficient of variation $c^2$ of
	the interarrival time. Each arrival has probability $p_j$ of 
	going to station $j$ instantly, independent of all
	previous history.  Let $\{A_j(t), t\ge 0\}$ be the
	corresponding arrival process to station $j$. It is known
	that
	\begin{align}\label{eq:var}
	\lim_{t\to \infty} \frac{\text{Var}(A_j(t))}{t} =  \bigl ( c^2 p_j^2 + p_j(1-p_j) \bigr ) \nu.
	\end{align}
	Thus, each of the first $J$ terms in the right side of
	(\ref{eq:sigma}) corresponds to the variance parameter in
	(\ref{eq:var}) of $J$ arrival sources to station $j$.  The
	first $j-1$ terms correspond to arrival sources from
	lightly-loaded stations, the $j$th term corresponds to the
	external arrival process to station $j$, and the last $J-j$ terms
	correspond to arrival sources from service completions at
	heavily-loaded stations. For a lightly-loaded station
	$\ell<j$, its external arrival process serves as the
	renewal arrival process, each arrival having  probability $w_{\ell j}$ to join station $j$.  For a heavily-loaded station
	$k>j$, its service process serves as the renewal arrival
	process (as if the server is always busy),
	each arrival having  probability $w_{k j}$ to join station $j$. The resulting
	formula $d_j$ in (\ref{eq:dj}) is consistent with the
	diffusion approximation of a $\sum_i G_i/GI/1$ queue with
	$J$ external arrival processes and immediate feedback
	probability $w_{jj}$ after the service at station $j$.
	
     \thm{main} will be proved in \sectn{outline}. The proof relies on the BAR in Section \ref{sec:bar} and the asymptotic BAR in \sectn{abar}. In addition, the proof relies critically on a
	uniform moment bound that is proved in the companion paper
	\cite[Theorem 2]{GuanChenDai2023}. For easy reference, we state that result here.
	\begin{proposition}[Theorem 2 of \cite{GuanChenDai2023}]
		\label{pro:zbound}
		Assume  Assumptions~\ref{ass:mscale}--\ref{ass:stable}. There exists $r_0\in (0,1)$ and $\epsilon_0>0$
		such that
		\begin{equation}\label{eq:Jmoment}
		\sup_{r\in (0, r_0)}\E\Big[\big(r^jZ_j^{(r)}\big)^{J+\epsilon_0}\Big]<\infty ,\quad j\in\mathcal{J}.
		\end{equation}
	\end{proposition}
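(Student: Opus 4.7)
My plan is to establish the uniform moment bound via a Lyapunov-drift argument anchored on the basic adjoint relationship (BAR) for the stationary distribution of $X^{(r)}$. Under Assumption~\ref{ass:stable}, BAR gives $\E[\mathcal{A}f(X^{(r)})] = 0$ for any test function $f$ whose generator action is integrable (see \cite{BravDaiMiya2017,BravDaiMiya2024}). Applied to carefully chosen polynomial test functions, BAR yields recursions bounding scaled moments of $Z_j^{(r)}$ in terms of strictly lower-order moments, with constants uniform in $r$. The argument proceeds by double induction on the moment order $k$ (from $k=1$ up to $k = J+\epsilon_0$) and on the station index $j$.

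\textbf{Key steps.} I would first apply BAR to the test function $V_{j,k}(z) = (r^j z_j)^{k+1}$. A Taylor expansion of the increments $V_{j,k}(z \pm e^{(\ell)}) - V_{j,k}(z)$ yields
\begin{align*}
\mathcal{A}V_{j,k}(z) = (k+1)\, r^j\bigl(\Lambda_j(z) - \mu_j^{(r)}\mathbf{1}\{z_j > 0\}\bigr)(r^j z_j)^k + \mathcal{R}_{j,k}(z),
\end{align*}
where $\Lambda_j(z) = \alpha_j + \sum_i P_{ij}\mu_i^{(r)}\mathbf{1}\{z_i>0\}$ is the instantaneous arrival rate at $j$ and $\mathcal{R}_{j,k}(z)$ is $O(r^{2j})$ times a polynomial of degree $\le k-1$ in $r^j z_j$. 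Using $\mu_i^{(r)} = \lambda_i + r^i$ and $\Prob(Z_i^{(r)}>0) = \rho_i^{(r)}$, I would rewrite
\begin{align*}
\Lambda_j(z) - \mu_j^{(r)}\mathbf{1}\{z_j>0\} = -r^j \mathbf{1}\{z_j>0\} + \lambda_j\mathbf{1}\{z_j=0\} + \sum_i P_{ij}\mu_i^{(r)}\bigl(\mathbf{1}\{z_i>0\} - \rho_i^{(r)}\bigr),
\end{align*}
which isolates the mean-zero cross-station fluctuations. Taking stationary expectations and using $(r^j Z_j^{(r)})^k\mathbf{1}\{Z_j^{(r)}=0\} = 0$ for $k\ge 1$, BAR delivers the master recursion
\begin{align*}
r^{2j}\E\bigl[(r^j Z_j^{(r)})^k\bigr] = r^j\sum_{i}P_{ij}\mu_i^{(r)}\,\E\bigl[(\mathbf{1}\{Z_i^{(r)}>0\}-\rho_i^{(r)})(r^j Z_j^{(r)})^k\bigr] + O(r^{2j}),
\end{align*}
where the $O(r^{2j})$ term absorbs $\E[\mathcal{R}_{j,k}]$ and is controlled by the induction hypothesis on $k$.

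\textbf{Main obstacle.} The critical step is to show that each cross-station covariance in the master recursion is $O(r^j)$, so that the right-hand side is $O(r^{2j})$ and dividing through yields the uniform bound $\E[(r^j Z_j^{(r)})^k] = O(1)$. Direct Cauchy--Schwarz gives only $|\mathrm{cov}| \le \sqrt{\rho_i^{(r)}(1-\rho_i^{(r)})}\,\sqrt{\E[(r^j Z_j^{(r)})^{2k}]} = O(r^{i/2})$, which is insufficient and, worse, circular since $\E[(r^j Z_j^{(r)})^{2k}]$ is precisely what we are trying to bound. The resolution is to apply BAR a second time, now to mixed auxiliary test functions of the form $h_i(r_{e,i}, r_{s,i})(r^j z_j)^k$ whose generator action reproduces the cross covariance and effectively ``integrates out'' $\mathbf{1}\{Z_i^{(r)}>0\}$ against its stationary mean $\rho_i^{(r)}$, leaving a residual of the desired order. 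The primitive-moment hypothesis of $J+1+\delta_0$ moments is consumed precisely here, keeping all Taylor remainders and all joint auxiliary moments (up to order $J+\epsilon_0$) uniformly integrable. This closes the double induction and yields the claimed bound.
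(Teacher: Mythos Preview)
The paper does not contain a proof of this proposition: it is stated verbatim as Theorem~2 of the companion paper \cite{GuanChenDai2023} and merely cited. The only remark the paper adds is that in the exponential case the bound is trivial (each $Z_j^{(r)}$ is geometric) and that an alternative Lyapunov proof for that case appears in \cite{Xu2022}. So there is no ``paper's own proof'' to compare against; I can only assess your sketch on its merits.

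Your overall plan (polynomial Lyapunov functions, BAR, double induction on moment order and station index) is the natural strategy and is almost certainly the architecture of the proof in \cite{GuanChenDai2023}. However, your central computation has a genuine gap in the non-Markovian setting. In this paper the operator $\mathcal{A}$ (equation~\eqref{eq:a}) acts only on the residual-time coordinates $(r_e,r_s)$, not on $z$; for a test function $V_{j,k}(z)=(r^jz_j)^{k+1}$ depending on $z$ alone one has $\mathcal{A}V_{j,k}\equiv 0$. The expression you write for $\mathcal{A}V_{j,k}(z)$, with the ``instantaneous arrival rate'' $\Lambda_j(z)=\alpha_j+\sum_iP_{ij}\mu_i^{(r)}\mathbf{1}\{z_i>0\}$, is the CTMC generator and is valid only under exponential primitives. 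For general interarrival and service distributions, increments in $z$ enter BAR through the Palm terms $\E_{e,i}[f(X_+)-f(X_-)]$ and $\E_{s,i}[f(X_+)-f(X_-)]$, which are expectations under Palm measures, not the stationary measure, and do not yield your formula directly. Consequently your ``master recursion'' does not follow from BAR as stated.

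The fix---which you gesture at only at the very end---is to work from the outset with test functions that couple $z$ to the residual times, e.g.\ polynomials in $z$ multiplied by exponential or polynomial factors in $(r_e,r_s)$ built to kill or control the Palm terms (exactly as $f_\theta$ in \eqref{eq:f_tilde} is designed to satisfy \eqref{eq:palm-kille}--\eqref{eq:palm-kills}). This is where the $J{+}1{+}\delta_0$ moment assumption is genuinely consumed, and where the cross-station coupling is handled. Your sketch treats this as a secondary patch to the covariance term, but for general distributions it is the heart of the argument; without it, the first displayed identity already fails.
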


	The moment bound (\ref{eq:Jmoment}) holds of course when  interarrival
	and service time distributions are exponential because $Z_j^{(r)}$ is geometrically distributed with mean $\rho^{(r)}_j/(1-\rho^{(r)}_j)$. In the exponential case, Lemma~23
	of \cite{Xu2022} provides an alternative proof via Lyapunov functions.

	\section{Numerical study}
	\label{sec:num}
	In this section, we illustrate the value of the approximation
	(\ref{eq:approx}) that is rooted in Theorem~\ref{thm:main}.  In
	Section \ref{sec:formula}, we propose an approximation
	formula for the cumulative distribution function (cdf) of the queue length
	vector. In Section \ref{sec:simulation}, we conduct a simulation study on a
	two-station queueing network to assess the accuracy of the approximation.
	We experiment with three sets of network parameters. \footnote{Simulation codes are publicly available at  
		\url{https://github.com/yaoshengxu/multi-scale-GJN}.	} In the first set, we vary the variability of the primitive distributions; in the second and third sets, we fix the distributions but vary the levels of load separation and the traffic intensities, respectively. In all cases, our proposed approximation performs well across multiple performance metrics, except when there is no load separation.

	\subsection{Approximation formula}\label{sec:formula}
	We aim to approximate the steady-state queue length vector
	$Z(\infty) \equiv (Z_1(\infty), \ldots, Z_J(\infty))$ by the vector
	of exponential random variables $E\equiv (E_1, \ldots, E_J)$ as specified in (\ref{eq:approx}).  To approximate the probability mass function of $Z(\infty)$, it is
	important to note that $Z(\infty)$ on the left side of
	(\ref{eq:approx}) takes discrete values, whereas the exponential
	random variables $E$ on the right-hand side are continuous.
	We propose to approximate $Z(\infty)$ by
	$Y\equiv(Y_1, \ldots, Y_J)$, where $Y_1, \ldots, Y_J$
	are independent and each $Y_j$ has a mixed distribution given below. From the
	distribution of $Y$, we further develop  approximations that are
	tailored to the particular performance measures of interest.
	
	We define $Y_j$ to be the random variable 
	with the following cumulative distribution function (cdf):
	\begin{equation}\label{eq:hybrid}
	F_j(x) = (1-\rho_j) + \rho_j (1-e^{-\frac{x}{\rho_jd_j/(1-\rho_j)}}),\text{ for all } x\ge 0 \text{ and } F_j(0-)=0.
	\end{equation}
	In this approximation (\ref{eq:hybrid}), $Y_j$ has a positive mass at $0$, namely, 
	\begin{align*}
	\Prob\{Y_j=0\} = 1-\rho_j,
	\end{align*}
	which is equal to $\Prob\{Z_j(\infty)=0\}$ because of (\ref{eq:idle}), and  $Y_j$ is exponentially distributed with mean $\rho_jd_j/(1-\rho_j)$ on $(0,\infty)$ with total mass $\rho_j$.
	
	\textbf{Steady-state mean approximation.}
	For the mean steady-state queue length $Z_j(\infty)$,  we propose
	\begin{equation}\label{eq:mean}
	\E[Z_j(\infty)] =\frac{\rho_j}{1-\rho_j}d_j; 
	\end{equation}
	which is consistent with (\ref{eq:approx}).
	
	\textbf{Probability mass function approximation.}
	For the probability mass approximation of $Z_j(\infty)$, we directly assign $1-\rho_j$ as a fixed probability for approximating $Z_j(\infty)=0$. Then we split the remaining probability mass $\rho_j$ for $Z_j(\infty)$ by discretizing $Y_j$, such that the approximated probabilities of $Z_j(\infty)$ sum up to 1. Specifically, with this mixed discretization, $Z_j(\infty)$ has a probability mass function (pmf):
	\begin{equation}\label{eq:hybrid_z}
	\begin{aligned}
	\Prob (Z_j(\infty)=0)&  \approx \Prob(Y_j=0)=  F_j(0) = 1-\rho_j,\\
	\Prob(Z_j(\infty)=k)& \approx \Prob(k<Y_j\le k+1)=  F_j(k+1)-F_j(k)\\
	&= \rho_j\big(e^{-\frac{k(1-\rho_j)}{\rho_j d_j}} - e^{-\frac{(k+1)(1-\rho_j)}{\rho_j d_j}}\big),\quad k \in\mathbb{N}.
	\end{aligned}
	\end{equation}
	We will use the pmf specified in~(\ref{eq:hybrid_z}) to plot the histogram as shown in Section \ref{sec:simulation}.
	
	\subsection{Simulation}\label{sec:simulation}
	\begin{figure}
		\centering
		\includegraphics[width=0.7\linewidth]{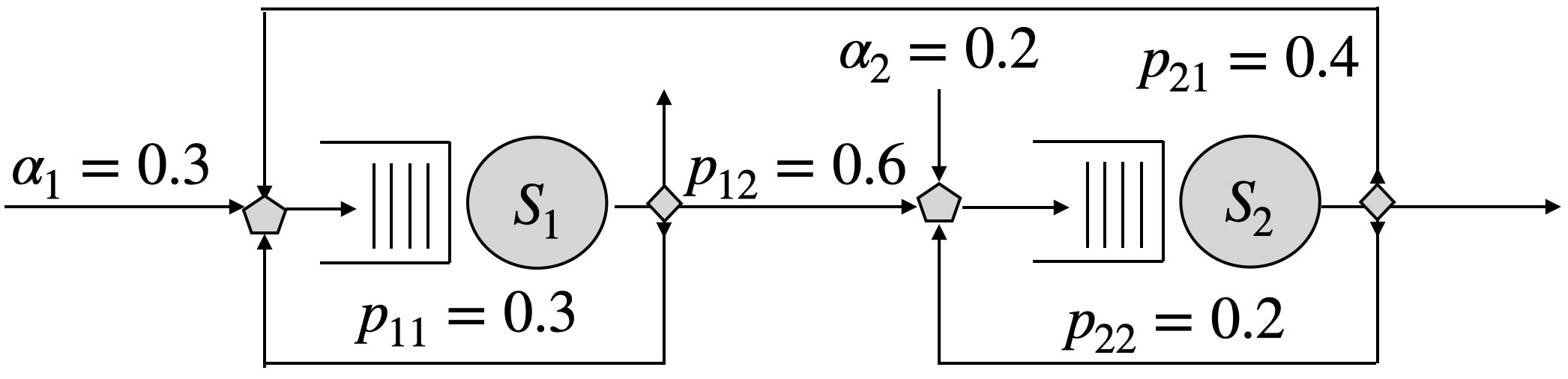}
		\caption{Two-station generalized Jackson network}
		\label{fig:gjn}
	\end{figure}
	Consider an example of two-station generalized Jackson network shown in Figure \ref{fig:gjn}
	with external interarrival and service time following Gamma
		distributions. The external arrival rates and the routing matrix are fixed to be
	$$\alpha_1=0.3,\quad \alpha_2=0.2,\quad P=\begin{pmatrix}0.3&0.6\\0.4&0.2\end{pmatrix},$$
	from which one has the  nominal arrival rates  $\lambda_1=1$ and  $\lambda_2=1$.
	We set the  service rates to be
	\begin{align*}
	\quad \mu_1=\lambda_1/\rho_1, \quad \mu_2=\lambda_2/\rho_2,
	\end{align*}
	with the traffic intensity $\rho_1, \rho_2$ being the parameters.
	
	Recall that a Gamma distribution is characterized by two
        parameters: shape and scale. When the mean of the Gamma
        distribution is fixed as the reciprocals of the external
        arrival and service rates, the distribution can be fully
        specified by choosing the shape parameter, with the scale
        parameter determined as the mean divided by the
        shape. Accordingly, the second and third columns of Table
        \ref{table:mean} represent our choices for the shape
        parameters of the external arrival and service
        distributions. Each entry under the shape columns contains two
        values, representing the shape parameters for stations 1 and
        2, respectively. Column $d$ in Table \ref{table:mean},
        representing $d_1$ and $d_2$ for station 1 and 2, is
          calculated from formula (\ref{eq:d_sim}) below that
          specializes (\ref{eq:dj})
          to the current setting:
	\begin{equation}\label{eq:d_sim}
	\begin{aligned}
	d_1&=\frac{1}{2\lambda_1(1-w_{11})}\left\{\alpha_1c_{e,1}^2 + \lambda_2 \bigl ( w_{21}^2c_{s,2}^2 + w_{21}(1-w_{21})\bigr)
	+ \lambda_1\bigl ( c_{s,1}^2(1-w_{11})^2 + w_{11}(1-w_{11})  \bigr )\right\},\\
	d_2&=\frac{1}{2\lambda_2(1-w_{22})} \left\{\alpha_1\bigl ( w_{12}^2c_{e,1}^2 + w_{12}(1-w_{12}) \bigr ) + \alpha_2c_{e,2}^2 
	+ \lambda_2\bigl ( c_{s,2}^2(1-w_{22})^2 + w_{22}(1-w_{22})  \bigr )\right\},
	\end{aligned}
	\end{equation}
	where the matrix $w$
	is calculated from (\ref{eq:ww1}) and (\ref{eq:ww2}) and its value is
	\begin{align*}
	w_{11} = 0.3,\quad  w_{12} = 0.857, \quad w_{21} = 0.4, \quad w_{22}=0.543.
	\end{align*}
	We conduct three sets of simulation experiments, summarized in {Tables~\ref{table:mean}--\ref{table:intensity}}. In the first set (Table~\ref{table:mean}), we analyze three cases---labeled A, B, C---by varying the variability of the primitive distributions. In the second set (Table~\ref{table:separation}), we examine three cases---labeled D, E, F---by varying the degree of load separation. In the third set (Table~\ref{table:intensity}), we consider three cases---labeled G, H, I---by varying the levels of traffic intensities. In Table~\ref{table:mean}, we fix the traffic intensities at $\rho_1 = 92\%$ and $\rho_2 = 98\%$, and simulate three cases by adjusting the shape parameters of the arrival and service distributions. In Tables~\ref{table:separation} and \ref{table:intensity}, we adopt the shape parameters of the Gamma distribution from Case C. Table~\ref{table:separation} simulates three cases by varying the multi-scale separation between the two stations. This is achieved by fixing $\rho_2 = 98\%$, and adjusting the traffic intensity $\rho_1$ from no separation (98\%) to large separation (88\%). Note that we could (but did not) add Case C  to  Table~\ref{table:separation} because the load in Case C lies between that of Case E and Case F. Table~\ref{table:intensity} simulates three cases by varying the levels of traffic intensities while fixing the load separation. Specifically, we fix the load separation by parameterizing $\rho_1 = 1 - r$ and $\rho_2 = 1 - r^2$, and vary $r$ from 0.2 to 0.05. The corresponding traffic intensities are presented in Table~\ref{table:intensity}. Note that the notation $r$ is slightly abused here to informally characterize the degree of separation; it does not exactly match the definition used in (\ref{htpar}). However, all results in this section are derived directly in terms of $\rho$, so this slight abuse of notation should not cause confusion. 
	
        Throughout this numerical study, we compare the simulation estimates (denoted as Sim in tables) with the theoretical estimates (denoted as M-Scale in tables) derived in Section \ref{sec:formula}. The simulation estimates are computed as follows. \cite{Whitt1989} indicates, in order to achieve a reasonable statistical precision, the run time needs to be longer if the traffic intensity is higher. In Tables~\ref{table:mean} and \ref{table:separation}, each simulation case is run for $10^9$ time units, by which time the network appears, from our experiments, to have approximately reached steady state.
        For Table~\ref{table:intensity}, considering their different levels of traffic intensities, we run Cases G, H, I for $10^7$, $10^8$, $10^9$ time units, respectively. In all sets of simulations, we record the queue lengths during the final one-tenth of the simulation period. This segment is divided into 20 batches to construct a 95\% confidence interval for the steady-state mean of $Z(\infty)$ using the batch means method. Within each batch, we compute the time average of the queue lengths by tracking the total time the queue spends in each state. Specifically, letting $t_i$ denote the total time in a batch when the queue length is equal to $i$, the time-weighted average queue length is given by
		$$
		\frac{1}{t} \sum_{i=0}^{\infty} i \cdot t_i,
		$$
	where $t$ is the total duration of the batch. The resulting batch means with confidence intervals are reported under the Sim columns in Tables~\ref{table:mean}--\ref{table:intensity}. In
	comparison, the multi-scale means calculated by (\ref{eq:mean}) are reported under the M-Scale columns.
	
	\begin{table}[H]
	\centering
	\begin{tabular}{cccccccccc}
		&&&~& \multicolumn{5}{c}{Traffic intensity $(92\%,98\%)$} & \\
		\hline
		\multirow{2}{*}{Case} & Arrival & Service& d& \multicolumn{2}{c}{Station 1} &~& \multicolumn{2}{c}{Station 2}  \\
		\cline{5-6}\cline{8-9}   &shape & shape & (\ref{eq:d_sim}) & \text { Sim} &   \text {M-Scale}  &~& \text { Sim} &   \text {M-Scale} \\
		\hline
		A &[0.75, 0.8]& [0.95, 0.6]&[1.17,1.29] &$13.37 \pm 0.12$ & 13.41 & ~& $63.58 \pm 2.06$& 63.08 \\
		\hline
		B&[0.4, 0.5]& [0.65, 0.5]  &[1.62, 1.81] &$18.65\pm 0.13$ & 18.68 & ~& $90.92\pm 4.21$& 88.64 \\
		\hline
		C&[0.2, 0.45]& [0.5, 0.4]  &[2.38, 2.57] & $27.52\pm 0.17$ & 27.35 & ~&  $121.72\pm 6.34$ & 126.15\\
	\end{tabular}
	\caption{Estimates of mean queue length on different arrival and service distributions.}
	\label{table:mean}
\end{table}

		\begin{table}[H]
	\centering
	\begin{tabular}{cccccccc}
		\multirow{2}{*}{Case}  &{Traffic}& \multicolumn{2}{c}{Station 1} &~& \multicolumn{2}{c}{Station 2}  \\
		\cline{3-4}\cline{6-7}  &intensity & \text { Sim} &   \text {M-Scale}  &~& \text { Sim} &   \text {M-Scale}   \\
		\hline D& $(98\%,98\%)$ &$118.39\pm 5.15$ &116.55  &~& $107.25\pm 6.12$&126.15\\
		\hline
		E&$(95\%,98\%)$&$45.86\pm 1.04$ & 45.19&  ~&$118.24\pm 8.69$ & 126.15\\
		\hline 
		F& $(88\%,98\%)$& $16.99\pm0.12$ & 17.44&~&$122.94\pm 5.82$ &126.15 \\	
	\end{tabular}
\caption{Estimates of mean queue length on different levels of separation\\
	\centering {using shape parameters in Case C.}}
	\label{table:separation}
\end{table}

		\begin{table}[H]
		\centering
		\begin{tabular}{cccccccc}
			\multirow{2}{*}{Case}  & \multirow{2}{*}{$r$}  &{Traffic}& \multicolumn{2}{c}{Station 1} &~& \multicolumn{2}{c}{Station 2}  \\
			\cline{4-5}\cline{7-8}  & &intensity & \text { Sim} &   \text {M-Scale}  &~& \text { Sim} &   \text {M-Scale}   \\
			\hline G& $0.2$ & $(80\%,96\%)$ &$9.15\pm 0.41$ & 9.51 &~& $56.45\pm 8.87$&61.79 \\
			\hline
			H& $0.1$&$(90\%,99\%)$&$20.96\pm 0.44$ & 21.41&  ~&$226.99\pm 37.60$ & 254.88\\
			\hline 
			I&$0.05$& $(95\%,99.75\%)$& $44.51\pm0.67$ & 45.19&~&$1215.41\pm 372.60$ &1027.23 \\	
		\end{tabular}
		\caption{Estimates of mean queue length on different levels of traffic intensities\\
			\centering {using shape parameters in Case C.}}
		\label{table:intensity}
	\end{table}

%		\begin{table}
%		\centering
%		\begin{tabular}{cccccccc}
	%		\multirow{2}{*}{Case}  & \multirow{2}{*}{$r$}  &{Traffic}& \multicolumn{2}{c}{Station 1} &~& \multicolumn{2}{c}{Station 2}  \\
	%		\cline{4-5}\cline{7-8}  & &intensity & \text { Sim} &   \text {M-Scale}  &~& \text { Sim} &   \text {M-Scale}   \\
	%		\hline I& $0.2$ & $(80\%,96\%)$ &$2.29\pm 0.10$ & 2.38 &~& $2.35\pm 0.37$&2.57 \\
	%		\hline
	%		II& $0.1$&$(90\%,99\%)$&$2.33\pm 0.05$ & 2.38&  ~&$2.29\pm 0.38$ & 2.57\\
	%		\hline 
	%		III&$0.05$& $(95\%,99.75\%)$& $2.34\pm0.04$ & 2.38&~&$3.05\pm 0.93$ &2.57 \\	
%		\end{tabular}
%		\caption{(Case C) Mean estimates of scaled queue length}
%	\end{table}	

	According to Tables \ref{table:mean}--\ref{table:intensity}, our multi-scale approximation in (\ref{eq:mean}) of the mean queue length demonstrates strong performance. In Table \ref{table:separation}, the multi-scale estimates align closely with the simulation ones for different levels of load separation, except for Case D when there is no load separation. This suggests that the presence of multi-scale separation gives accurate estimation of mean values.
	
%	\red{(may be removed$->$) Interestingly, the performance is also remarkably strong in Case I, in which there is no load separation. This unexpected result cannot be explained by the results in this paper, and we plan to explore this further in future research.}

	One may wonder how well the approximation performs beyond estimating the mean queue lengths. To evaluate this, we present a histogram comparison for Case H, shown in Figure~\ref{fig:hist}. In these histograms, the Sim and Sim CI represent the mean and 95\% confidence intervals of the percentage of time the queue length takes on different values over 20 simulation batches. In contrast, the M-Scale represents the histogram derived using the approximation in (\ref{eq:hybrid_z}). Note that the zoomed-in histograms in the third panels of stations 1 and 2 use different x- and y-axis scales. Figures~\ref{fig:hist} provides further evidence that our theoretical framework offers a highly accurate approximation, extending beyond mean estimates.
	
\begin{figure}[H]
	\centering
	
	% Group label: Station 1
	\parbox{\textwidth}{\centering {\small Station 1}\par\vspace{0.em}}
	
	\begin{subfigure}[b]{0.32\linewidth}
		\includegraphics[width=\linewidth]{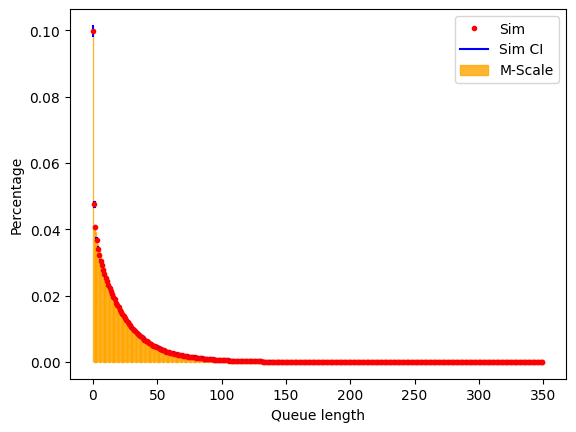}
	\end{subfigure}
	\begin{subfigure}[b]{0.32\linewidth}
		\includegraphics[width=\linewidth]{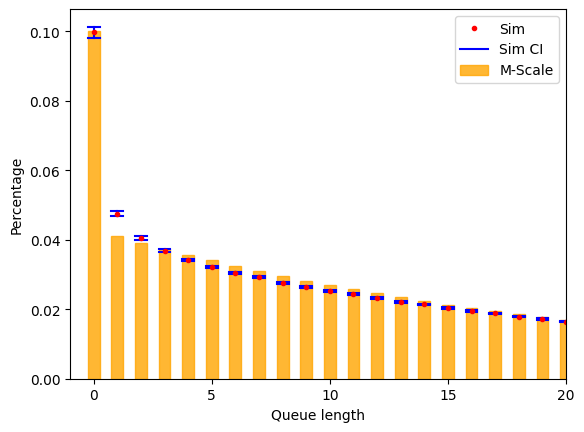}
	\end{subfigure}
	\begin{subfigure}[b]{0.32\linewidth}
		\includegraphics[width=\linewidth]{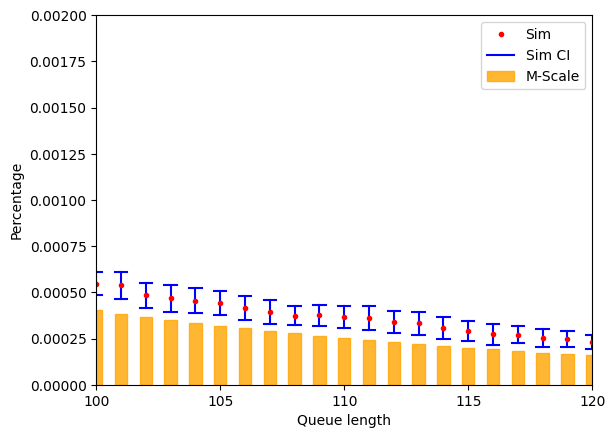}
	\end{subfigure}
	
	% Group label: Station 2
	\parbox{\textwidth}{\centering {\small Station 2}\par\vspace{0.0em}}
	
	\begin{subfigure}[b]{0.32\linewidth}
		\includegraphics[width=\linewidth]{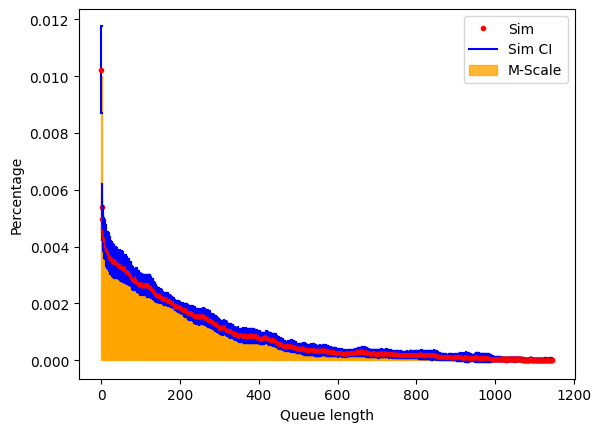}
	\end{subfigure}
	\begin{subfigure}[b]{0.32\linewidth}
		\includegraphics[width=\linewidth]{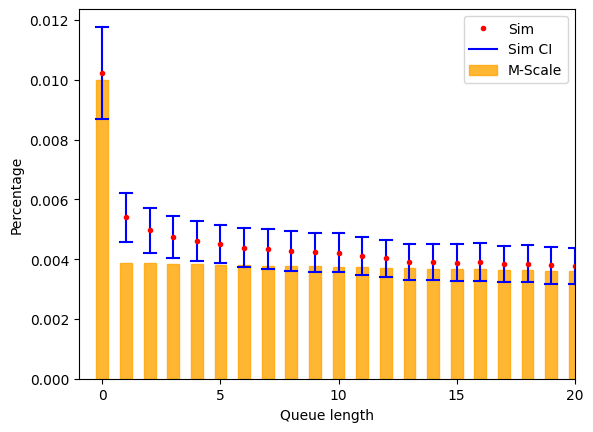}
	\end{subfigure}
	\begin{subfigure}[b]{0.32\linewidth}
		\includegraphics[width=\linewidth]{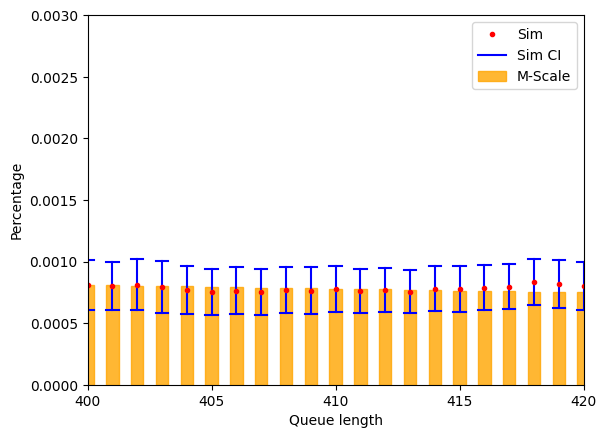}
	\end{subfigure}
	
	\caption{Case H histograms with zoom-in}
	\label{fig:hist}
\end{figure}

	%Additionally, we present a quantile comparison for Cases A-C in Table \ref{table: quantile}. The quantiles before the parentheses (denoted as Sim) are calculated using a sample-based approach, where the $q$th quantile is defined as the smallest queue length for which the cumulative percentage (averaged over 20 batches) at least reaches $q$. In contrast, the quantiles within the parentheses (denoted as M-Scale) are calculated using the cumulative distribution function (cdf) described in equation (\ref{eq:hybrid}).
	
%	\begin{table}[H]
	%	\centering
	%	\begin{tabular}{cccccc}
%			\hline
	%		\text {Case} & \text {Station} & \text { 25\% quantile } &  \text { 50\% quantile }&  \text { 75\% quantile }&  \text { 90\% quantile }\\
%			\hline 		\multirow{2}{*}{A}&1& 3(2.74) & 9(8.18) & 19 (17.47) & 32(29.76) \\
	%		&2&17(16.87)& 43(42.45)  & 88(86.18)& 148(143.98) \\
	%		\hline
	%		\multirow{2}{*}{B}& 1&4(3.82)&12(11.39) &26(24.34)& 45(41.45) \\
	%		&2&24(23.71)&61(59.65)  &124(121.09) &211(202.30)\\
	%		\hline
	%		\multirow{2}{*}{C}&1&6(5.59)& 18(16.68)  &39(35.64) & 67(60.70) \\
	%		&2&31(33.74) &82(84.89)  & 171(172.33)& 290(287.92) \\
		%	\hline
	%	\end{tabular}
	%	\caption{Quantile estimates of queue length on Sim and (M-Scale)}
	%	\label{table: quantile}
%	\end{table}
	
	Our Theorem \ref{thm:main} establishes that the multi-scale limits $E_1$ and $E_2$ are independent. To conclude this section, we take Case G as an example to investigate independence through statistical testing. To obtain independent samples, we run the simulation for a longer duration of $4\times 10^9$ time units. For the last $3 \times 10^9$ time units, we divide them into 3000 intervals, each containing $10^6$ time units. At the start of each interval, we record the joint and marginal queue lengths as representative points. These representative points, observed over $10^6$-unit intervals, are considered independent occurrences. As a result, we obtain $3000$ independent observations and count their occurrences to construct the contingency table. Using the contingency table, we perform the $\chi^2$ test of independence  \cite[Chapter 10.3]{DegSch2011}. The resulting contingency table has dimensions $70\times 278$. The $\chi^2$ statistic is $18809.59$ with $19113$ degrees of freedom. The $p$-value is $0.94$, meaning we cannot reject the null hypothesis that the queue lengths are independent. This result further supports the asymptotically independent behavior.

\section{Proof of Theorem \ref{thm:main}}\label{sec:outline}

We first present a roadmap for proving \thm{main}.
 % The proof relies on a transform version of the basic adjoint relationship
 % (BAR), which will be developed in Section~\ref{sec:bar}.
Recall that $X^{(r)}=(Z^{(r)}, R_{e}^{(r)}, R^{(r)}_s)$ denotes the
steady state of the Markov process that governs the dynamics of the
GJN.  Define the moment generating function (MGF) $\phi^{(r)}(\theta)$
of $Z^{(r)}$:
\begin{align*}
   \phi^{(r)}(\theta) = \E\Big[ \exp{\Big(\sum_{j\in \mathcal{J}}\theta_jZ_j^{(r)}\Big)}\Big], \quad  \theta\in \R^J_-\equiv \{y\in \R^J, y\le 0\}.
\end{align*}
Since  $\theta$ is restricted to be non-positive, $\phi^{(r)}(\theta)$ is
also known as the Laplace transform of $Z^{(r)}$.
% =  \E[ f_{\theta}(Z^{(r)})]
% where $f_\theta(z)=\exp(\langle\theta, z\rangle)$ for $z\in\R_+^J$ and  $\langle\theta, z\rangle=\sum_{j\in \mathcal{J}}\theta_j z_j$.
%and $ \R_+^J$ is the nonnegative orthant in $\R^J$.
% \begin{align*}
% %  \label{eq:phi}
% &  \phi^{(r)}(\theta) = \quad  \theta\in \R^J_-,  \\
% &  %\label{eq:phii}
% \phi^{(r)}_i(\theta) = \E[ f_{\theta}(Z^{(r)})\mid Z^{(r)}_i =0], \quad i\in \mathcal{J}.
%     \end{align*}
One can check that  $\phi^{(r)}(r\eta_1, \ldots, r^J \eta_J)$, with
$\eta=(\eta_1,\ldots, \eta_J)'\in \R^J_-$, is the MGF of $(rZ^{(r)}_1,\ldots, r^JZ^{(r)}_J)$.
It is well known that  proving (\ref{eq:joint-conv}) in \thm{main} is equivalent to proving
\begin{align}\label{eq:phi-conv}
\lim_{r\to0}\phi^{(r)}(r\eta_1, \ldots, r^J \eta_J) = \prod_{j\in \mathcal{J}} \frac{1}{1-d_j\eta_j}, \quad   \eta=(\eta_1,\ldots, \eta_J)'\in \R^J_-.
\end{align}
To prove (\ref{eq:phi-conv}) and hence \thm{main}, it suffices to prove the following proposition because 
equation (\ref{eq:phi-conv})  follows from (\ref{eq:ite}) in
Proposition~\ref{lem:ite} by induction.
\begin{proposition}\label{lem:ite}
  Assume Assumptions~\ref{ass:mscale}-\ref{ass:stable} hold.
  Fix  $\eta\in \R^J_-$. For each $j\in \mathcal{J}$, as $r\to 0$, 
  \begin{align}
    \phi^{(r)}(0,\ldots, 0, r^j\eta_j, \ldots, r^J\eta_J) -\frac{1}{1-d_j\eta_j}
    \phi^{(r)}(0,\ldots, 0, r^{j+1}\eta_{j+1}, \ldots, r^J\eta_J) \to 0, \label{eq:ite}
  \end{align}
  with the convention that
  $\phi^{(r)}(0,\ldots, 0, r^{j+1}\eta_{j+1}, \ldots, r^J\eta_J)=1 $
  when $j=J$.
\end{proposition}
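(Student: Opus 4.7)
The plan is to apply the asymptotic basic adjoint relationship (ABAR) derived in Section~\ref{sec:abar} to the exponential test function
\[
f(x) \;=\; \exp\!\Bigl(\,\sum_{i \ge j} r^{i}\eta_{i}\, z_{i}\Bigr),
\]
i.e., choosing $\theta(r) = (0,\ldots,0, r^{j}\eta_{j}, r^{j+1}\eta_{j+1}, \ldots, r^{J}\eta_{J})$ so that the first $j-1$ coordinates are suppressed. The ABAR equation (\ref{eq:abar}) then rewrites $\phi^{(r)}(\theta(r))$ in terms of external arrival contributions, routing/service contributions (weighted by $1\{Z_{k}^{(r)}>0\}$), and boundary terms coming from idleness events $\{Z_{k}^{(r)}=0\}$. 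The crucial feature is that $\Prob\{Z_{j}^{(r)}=0\} = r^{j}/\mu_{j}^{(r)}$ by (\ref{eq:idle}), so the boundary term at station $j$ appears at exactly the scale $r^{j}$ that governs the $\eta_{j}$-dependence.

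Next I would substitute the Taylor expansions (\ref{eq:eta-t-taylor}) and (\ref{eq:xi-t-taylor}) for the residual-time transforms and group terms by their order in $r$. Since $\theta_{i}(r) = r^{i}\eta_{i}$, a $k$th-order monomial in $\theta_{i_{1}},\ldots,\theta_{i_{k}}$ contributes at order $r^{i_{1}+\cdots+i_{k}}$. At scale $r^{j}$, the surviving contributions are: (a) the linear-in-$\eta_{j}$ terms, which, after using the traffic equation $\lambda = \alpha + P'\lambda$ to cancel across arrivals, immediate feedbacks, and service completions, collapse to a coefficient proportional to $\lambda_{j}(1-w_{jj})$; (b) quadratic terms involving $\eta_{j}^{2}$ together with bilinear cross-terms $\eta_{j}\eta_{j}$ generated by correlated routing out of stations $i>j$ and by cascaded arrivals from stations $i<j$, which assemble into the variance $\tfrac{1}{2}\sigma_{j}^{2}\eta_{j}^{2}$ with $\sigma_{j}^{2}$ given by (\ref{eq:sigma}); and (c) the boundary term $\lambda_{j}(1-w_{jj})\,\phi^{(r)}(\theta^{(j\to0)}(r))$, where $\theta^{(j\to0)}$ denotes $\theta$ with the $j$th coordinate reset to zero.

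Combining these, after dividing through by $r^{j}$, the leading-order equation reads
\[
\Bigl(-\lambda_{j}(1-w_{jj})\eta_{j} + \tfrac{1}{2}\sigma_{j}^{2}\eta_{j}^{2}\Bigr)\phi^{(r)}(\theta(r)) \;+\; \lambda_{j}(1-w_{jj})\,\phi^{(r)}(\theta^{(j\to 0)}(r)) \;=\; o(1).
\]
Dividing by $-\lambda_{j}(1-w_{jj})$ and invoking the definition $d_{j} = \sigma_{j}^{2}/(2\lambda_{j}(1-w_{jj}))$ in (\ref{eq:dj}) produces exactly $(1-d_{j}\eta_{j})\phi^{(r)}(\theta(r)) - \phi^{(r)}(\theta^{(j\to0)}(r)) \to 0$, which is (\ref{eq:ite}). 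The probabilistic interpretation of $(w_{ij})$ explains why these weights naturally appear: arrivals from lightly-loaded stations $i<j$ must be routed through the absorbing substochastic sub-DTMC on $\{1,\ldots,j-1\}$ before hitting $j$, while departures from heavily-loaded stations $i>j$ (which are effectively always busy at the $r^{j}$ scale since $r^{i}Z_{i}^{(r)}$ is tight while $r^{j}Z_{i}^{(r)}\to\infty$) reach $j$ with the same probability $w_{ij}$.

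The main obstacle will be controlling the Taylor remainders uniformly in $r$. These remainders mix moments of the residual interarrival and service times up to order $J+1$ with moments of the scaled queue lengths $r^{i}Z_{i}^{(r)}$ for various $i\ge j$. The moment assumption (\ref{eq:moment}) handles the primitive side, and Proposition~\ref{pro:zbound} supplies the crucial uniform bound $\sup_{r}\E[(r^{i}Z_{i}^{(r)})^{J+\epsilon_{0}}] < \infty$ needed on the queue-length side; together they drive every remainder to $o(r^{j})$ so that, after dividing by $r^{j}$, the error is $o(1)$. The delicate bookkeeping here is to verify that the mixed terms coming from cascades through stations $1,\ldots,j-1$ and from departures of stations $j+1,\ldots,J$ reassemble cleanly into the formula (\ref{eq:sigma}) for $\sigma_{j}^{2}$, rather than producing spurious cross-station dependence that would destroy the product form.
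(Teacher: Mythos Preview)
Your approach has a genuine gap at the choice of $\theta(r)=(0,\ldots,0,r^{j}\eta_{j},\ldots,r^{J}\eta_{J})$. With the first $j-1$ coordinates set to zero, the asymptotic BAR (\ref{eq:abar}) does \emph{not} balance at the scale you need. Recall that under Assumption~\ref{ass:mscale} one has $\mu_{\ell}^{(r)}-\lambda_{\ell}=r^{\ell}$, while with your $\theta$ the linearized drift satisfies $\bar\xi_{\ell}(\theta)=\sum_{k\ge j}P_{\ell k}r^{k}\eta_{k}=P_{\ell j}\eta_{j}r^{j}+o(r^{j})$ for $\ell<j$, which is nonzero whenever $P_{\ell j}\ne 0$. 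Hence the boundary contribution $(\mu_{\ell}^{(r)}-\lambda_{\ell})\bar\xi_{\ell}(\theta)\bigl(\psi^{(r)}-\psi^{(r)}_{\ell}\bigr)$ is of order $r^{\ell+j}$, and for $\ell<j$ this is strictly larger than the order $r^{2j}$ carried by $Q^{*}(\theta)$ and by the station-$j$ boundary term. Dividing by $r^{2j}$, these terms blow up like $r^{\ell-j}\to\infty$; there is no reason for $\psi^{(r)}-\psi^{(r)}_{\ell}$ to be $O(r^{j-\ell})$ a priori. Consequently the ``leading-order equation'' you display cannot be obtained from (\ref{eq:abar}). (Note also that the correct normalization is $r^{2j}$, not $r^{j}$: in (\ref{eq:abar}) the interior linear terms $\sum_{i}(\alpha_{i}\theta_{i}+\lambda_{i}\bar\xi_{i}(\theta))$ vanish \emph{exactly} by (\ref{eq:flow-balance}), so there is no surviving linear-in-$\eta_{j}$ coefficient multiplying $\psi^{(r)}$.)

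The paper resolves this by choosing the first $j-1$ coordinates \emph{nonzero}: $\theta_{\ell}=w_{\ell j}\theta_{j}+\sum_{k>j}w_{\ell k}\theta_{k}$ (equations (\ref{eq:w2})--(\ref{eq:w3})), together with a compatible shift of $\theta_{j}$, precisely so that $\bar\xi_{\ell}(\theta)=0$ for all $\ell<j$ (Lemma~\ref{lem:linear}). This kills the dominant boundary terms at stations $1,\ldots,j-1$ outright, and (\ref{eq:abar}) then balances at order $r^{2j}$ to yield (\ref{eq:abar-j}). The weights $w_{\ell j}$ in $\sigma_{j}^{2}$ do not arise from ``cascaded'' Taylor terms with $\theta_{\ell}=0$; they come from evaluating $Q^{*}$ at this modified $\theta$, whose leading profile is $(w_{1j},\ldots,w_{j-1,j},1,0,\ldots,0)$, and Lemma~\ref{lem:sigma} shows $2Q^{*}$ at that point equals $\sigma_{j}^{2}$. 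With your $\theta$ the analogous limit of $2Q^{*}(\theta)/(r^{2j}\eta_{j}^{2})$ produces an expression in the raw $P_{ij}$ and in $\lambda_{i}c_{s,i}^{2}$ for $i<j$, not the formula (\ref{eq:sigma}). Finally, the boundary term the BAR delivers at station $j$ is $\psi_{j}^{(r)}(\theta)=\E[f_{\theta}\mid Z_{j}^{(r)}=0]$, not $\phi^{(r)}(\theta^{(j\to 0)})$; the passage from the former to $\phi^{(r)}(0,\ldots,0,r^{j+1}\eta_{j+1},\ldots)$ requires the additional step (\ref{eq:ite3}), obtained by specializing the already-proved relation to $\eta_{j}=0$.
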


To prove Proposition~\ref{lem:ite} and hence \thm{main}, we utilize the basic adjoint
relationship (BAR) that was developed in \cite{BravDaiMiya2017, BravDaiMiya2024}. Since
the BAR is a key tool to prove our results,  we
recapitulate the BAR in Section~\ref{sec:bar}.
To make our proof strategy clear, we will build the argument in two
stages.  In Section~\ref{sec:proof-b}, we present a proof of
Proposition~\ref{lem:ite} under a simplifying assumption that the
primitive distributions have bounded supports (see
Assumption~\ref{ass:bound-support} below).  This allows us to clearly
highlight the core methodology of the BAR-approach and our key
innovations.  Then, in Section~\ref{sec:abar}, we will show how this same proof
structure extends to the general case (without assuming Assumption~\ref{ass:bound-support}) by introducing the necessary
truncation arguments and establishing an asymptotic version of the
BAR.

%   The
% general proof of \thm{main} (without assuming Assumption~\ref{ass:bound-support}) is delayed to Section~\ref{sec:abar}. One may observe that Section~\ref{sec:proof-b} is redundant given that Section~\ref{sec:abar} contains a complete proof.
% We believe that our current proof organization allows most readers to easily grasp key innovations
% of this paper  in Section~\ref{sec:proof-b}, 
% without having to go through the standard but tedious
% truncation arguments in Section~\ref{sec:abar}.

\subsection{Basic adjoint relationship}\label{sec:bar}
The BAR characterizes the distribution of $X^{(r)}$, and it is developed in
\cite{BravDaiMiya2017,BravDaiMiya2024} for steady-state analysis of
continuous-time queueing networks with general primitive
distributions. In this section, we recapitulate the BAR.
The BAR-approach provides an alternative to the
interchange of limits, an approach pioneered in \cite{GamaZeev2006} for proving steady-state
convergence of queueing networks in conventional heavy traffic. This
paper and the subsequent paper \cite{DaiHuo2024} demonstrate that BAR is also a natural
technical tool to prove asymptotic steady-state independence under
multi-scale heavy traffic.

To state the BAR for a GJN,  denote by $\mathcal{D}$ the set of bounded functions $f: S\to \R$ satisfying (a) for each $z\in \Z^J_+$,  $f(z, \cdot, \cdot): \R_{+}^J \times \R_{+}^J \rightarrow \R$ is continuously differentiable
at all but finitely many points, and (b) the derivatives of $f(z, \cdot , \cdot)$ in each dimension have a uniform bound over $z\in\Z^J_+$. The BAR takes the form: for each $f\in \mathcal{D}$,
\begin{align} 
  \E_\pi[\mathcal{A} f(X)] & + \sum_{j \in \mathcal{J}} \alpha_j \E_{e, j}\left[f(X_+)-f(X_-)\right] 
+\sum_{j \in \mathcal{J}} \lambda_j \E_{s, j}\left[f(X_+)-f(X_-)\right]=0,
\label{eq:fullbar}                                     
\end{align}
where
\begin{align}\label{eq:a}\mathcal{A} f(x)=-\sum_{j \in \mathcal{J}} \frac{\partial f}{\partial r_{e, j}}(x)-\sum_{j \in \mathcal{J}} \frac{\partial f}{\partial r_{s, j}}(x) \mathbbm{1}\left(z_j>0\right), \quad x=\left(z, r_e, r_s\right) \in \Z^J_+\times \R^J_+\times \R^J_+. \end{align}
In (\ref{eq:fullbar}), $\E_{\pi}[\cdot ]$ is the expectation when $X$
follows the stationary distribution $\pi$, $\E_{e,i}[\cdot]$ is the
expectation when $X_-$ follows the Palm distribution $\nu_{e,i}$. The
Palm distribution $\nu_{e,i}$ measures the distribution of the state
$X_-=(Z_-, R_{e,-}, R_{s,-})$, immediately before an external arrival
to station $i$. In particular, it is necessarily true that under the
Palm distribution $\nu_{e,i}$, $R_{e, i, -}=0$ and the post-jump state $X_+=(Z_+, R_{e, +}, R_{s,+})$ satisfying
\begin{align}\label{eq:palm-change}
  Z_{i,+}=1 + Z_{i,-}, \quad  R_{e,i,+}= T_{e,i}/\alpha_i,
\end{align}
and other components of $X_+$ remain the same as $X_-$, where
$T_{e,i}$ is a  random variable that has the same distribution
as  $T_{e,i}(1)$ and is independent of $X_-$. Equation
(\ref{eq:palm-change}) states that an external arrival to station
$i$ causes queue length to increases by $1$ and the remaining
interarrival time is reset to a ``fresh'' interarrival time. In vector form, under Palm measure $\nu_{e,i}$,
\begin{align*}
 X_+-X_-= \Delta_{e, i} \equiv\left(e^{(i)}, e^{(i)} T_{e, i} / \alpha_i, 0\right), \quad i\in \mathcal{J},
\end{align*}
where $e^{(i)}$ is the $J$-vector with the $i$th component being $1$ and other components being $0$. Palm expectation $\E_{s,i}[\cdot]$ with respect to
the Palm distribution $v_{s,i}$ can be explained similarly with
\begin{align*}
  \Delta_{s,i} \equiv\left(-e^{(i)}+\phi_i, 0, e^{(i)} T_{s, i} / \mu_i\right), \quad i \in \mathcal{J} ,
\end{align*}
and $(\phi_i, T_{s,i})$ is the random vector that has the same
distribution as $(\Phi_i(1), T_{s,i}(1))$ and is independent of $X_-$.
The proof of (\ref{eq:fullbar}) is given in (6.16) and Lemma~6.3 of
\cite{BravDaiMiya2024}, and the Palm distributions are defined in (6.11)
and (6.21) there. The proof
follows from the stationary equation
\begin{align*}
  \E[f(X(1))-f(X(0))]=0, \quad \forall f \in \mathcal{D},
\end{align*}
and the fundamental theorem of calculus, where $\{X(t), t\geq 0\}$ is
the Markov process describing the GJN dynamics and $X(0)$ follows the stationary distribution $\pi$.
BAR (\ref{eq:fullbar}) is the same as BAR (3.15) of
\cite{BravDaiMiya2017}, except that  Palm
distributions were not defined explicitly in \cite{BravDaiMiya2017}.

When $f\in \mathcal{D}$ satisfies
\begin{align}
  \label{eq:palm-kille}
  &  \E[f(z+e^{(i)}, u+  e^{(i)}T_{e,i}/\alpha_i, v)] = f(z, u, v)
\end{align}
for each  $x=(z,u, v)\in S$  with $ u_i=0$, and 
\begin{align}
  \label{eq:palm-kills}
&  \E[f(z-e^{(i)}+\phi_i, u, v+e^{(i)}T_{s,i}/\mu^{(r)}_i)] = f(z, u, v)
\end{align}
for each  $x=(z,u, v)\in S$  with  $z_i>0$ and $v_i=0$,
the Palm terms in (\ref{eq:fullbar}) are all zero and  BAR (\ref{eq:fullbar}) reduces to
\begin{align}
  \label{eq:bar-no-palm}
    \E[\mathcal{A}f(X)]=0.
\end{align}
Equation (\ref{eq:bar-no-palm}) was first proved in  Lemma~3.1 of
\cite{BravDaiMiya2017}.

\subsection{Proof of Proposition~\ref{lem:ite} under Assumption \ref{ass:bound-support}}\label{sec:proof-b}
% We will present two proofs for Proposition~\ref{lem:ite}.
% The first proof is given in this section under the following bounded-support assumption.
\begin{assumption}[Bounded support]\label{ass:bound-support}
  For each $i\in \mathcal{J}$, random variables $T_{e,i}(1)$ and
  $T_{s,i}(1)$ have bounded supports. In addition, they satisfy
  \begin{align}
    \label{eq:Tp}
    \Prob\{T_{e,i}(1)>0\}=1, \quad    \Prob\{T_{s,i}(1)>0\}=1, \quad i\in \mathcal{J}.
  \end{align}
\end{assumption}
Assumption~\ref{ass:bound-support} is stronger than the moment
assumption, Assumption~\ref{ass:moment}.  The rest of this section is
devoted to the proof of Proposition~\ref{lem:ite} under
Assumption~\ref{ass:bound-support}.   In
Section~\ref{sec:abar}, we will present the second  proof of
Proposition~\ref{lem:ite} without assuming Assumption~\ref{ass:bound-support} by going through a detailed truncation argument.

% This proof relies on a transform
% version of the basic adjoint relationship (BAR) (\ref{eq:bar-b}), which will be proved
% in Section~\ref{sec:bar}.

%In the rest of this section,
Instead of working with
$\phi^{(r)}(\theta)$ directly, we will work with
\begin{align}\label{eq:psi0}
  {\psi}^{(r)}(\theta) =\E[{f}_\theta(X^{(r)})],
  \quad \text{ and } \quad    {\psi}^{(r)}_i(\theta)=\E[  f_\theta(X^{(r)})
  \mid Z^{(r)}_i=0], \quad  i\in \mathcal{J},
\end{align}
for each $\theta\in \R^J_-$, where for  $x=(z, u, v)\in \Z^J_+\times \R^J_+\times \R^J_+$,
      	\begin{align}\label{eq:f_tilde}
	{f}_{\theta}(x) = \exp\Big(\langle \theta, z\rangle - \sum_{i} \alpha_i \gamma_i (\theta_i) u_i - \sum_{i} \mu^{(r)}_i \xi_i(\theta) v_i\Big).
	\end{align}
In (\ref{eq:f_tilde}), $\gamma_i(\theta)$ and $\xi_i(\theta)$ are defined via 
	\begin{align}
	& e^{\theta_i} \E[ e^{-\gamma_i(\theta_i)T_{e,i}(1)}]=1, \label{eq:eta-g}\\
	& \bigl ( P_{i0} e^{-\theta_i} + \sum_{j\in \mathcal{J}} P_{ij} e^{\theta_j-\theta_i} \bigr )\E[ e^{-\xi_i(\theta)T_{s,i}(1)}]=1. \label{eq:xi-g}
	\end{align}
        Fix a $\theta\in \R^J_-$. When $(u,v)$ is restricted to a
        bounded domain, the function $f_\theta(z, u, v)$ belongs to
        ${\cal D}$, the class of functions defined in the second
        paragraph of Section~\ref{sec:bar}, for any choice of
        $\gamma_i(\theta)$ and $\xi_i(\theta)$ that do not necessarily
        satisfy \eq{eta-g} and \eq{xi-g} because they are independent of state variable  $x=(z,u, v)$.  Under
        Assumption~\ref{ass:bound-support}, $\gamma_i(\theta_i)$ and
        $\xi_i(\theta)$ in \eq{eta-g} and \eq{xi-g} are well defined
        for \emph{all} $\theta\in \R^J$, and one can check that when
        $\gamma_i(\theta)$ satisfies \eq{eta-g}, $f_\theta$ satisfies
        \eq{palm-kille}, and when $\xi_i(\theta)$ satisfies \eq{xi-g},
        $f_\theta$ satisfies \eq{palm-kills}.

        % When bounded support
        % assumption is removed in Section~\ref{sec:abar}, a truncation
        % argument will be used to  define and utilize $\gamma_i(\theta_i, r)$ and $\xi(\theta, r)$. When condition \eq{Tp} is relaxed in Section~\ref{sec:abar}, $\gamma_i(\theta_i, r)$ and $\xi(\theta, r)$ are only well defined for
        % $\theta$ in a neighborhood of the origin.
        
        The advantage of working with ${\psi}^{(r)}$ is that
  it, together with 
        $({\psi}^{(r)}_1, \ldots,  {\psi}^{(r)}_J)$, 
         satisfies the following transform version of BAR.
The transform-BAR
(\ref{eq:bar-b}) follows from BAR (3.20) of \cite{BravDaiMiya2017} by
taking $f(x)=f_\theta(x)$ in (\ref{eq:f_tilde}). For the convenience
of readers, the proof of Lemma~\ref{lem:t-bar}  will be recapitulated below.
\begin{lemma}\label{lem:t-bar}
  Assume   Assumptions~\ref{ass:stable} and \ref{ass:bound-support} hold.
Then, $\gamma_i(\theta_i)$ and $\xi_i(\theta)$ are well defined
for every $\theta\in \R^J$. Furthermore, $(\psi^{(r)}, \psi^{(r)}_1, \ldots, \psi^{(r)}_J)$ satisfies the transform-BAR
  \begin{align}
    \label{eq:bar-b}
    Q(\theta) {{\psi}}^{(r)}(\theta) + \sum_{i\in \mathcal{J}} (\mu^{(r)}_i-\lambda_i) \xi_i(\theta) \bigl ({\psi}^{(r)}(\theta)-  {\psi}_i^{(r)}(\theta) \bigr ) =0, \quad \theta\in \R^J_-,
  \end{align}
  where $Q(\theta)$ is defined to be
  \begin{align}
    &  \label{eq:Q}
      Q(\theta)=\sum_{i\in \mathcal{J}}\bigl ( \alpha_i \gamma_i(\theta_i) + \lambda_i \xi_i(\theta) \bigr ).
  \end{align}      
\end{lemma}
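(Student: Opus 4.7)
The plan is to establish the two claims of Lemma~\ref{lem:t-bar} in sequence: first the well-definedness of $\gamma_i(\theta_i)$ and $\xi_i(\theta)$, then the transform-BAR (\ref{eq:bar-b}) via Dynkin's identity applied to $f_\theta$ in stationarity.

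For well-definedness, set $g_i(\gamma) = \E[e^{-\gamma T_{e,i}(1)}]$. Under Assumption~\ref{ass:bound-support}, $T_{e,i}(1)$ is bounded and a.s.\ strictly positive, so $g_i$ is smooth on all of $\R$, strictly decreasing (its derivative equals $-\E[T_{e,i}(1) e^{-\gamma T_{e,i}(1)}] < 0$), with $g_i(0) = 1$, $\lim_{\gamma\to +\infty} g_i(\gamma) = 0$, and $\lim_{\gamma\to -\infty} g_i(\gamma) = +\infty$. Hence $g_i : \R \to (0,\infty)$ is a bijection and (\ref{eq:eta-g}) admits the unique solution $\gamma_i(\theta_i) = g_i^{-1}(e^{-\theta_i})$ for every $\theta_i \in \R$. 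Because the prefactor $P_{i0} e^{-\theta_i} + \sum_k P_{ik} e^{\theta_k - \theta_i}$ is strictly positive for every $\theta \in \R^J$, the identical monotonicity argument applied to $\E[e^{-\xi T_{s,i}(1)}]$ produces a unique $\xi_i(\theta)$ solving (\ref{eq:xi-g}).

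For the transform-BAR, the target identity is $\E[\mathcal{A} f_\theta(X^{(r)})] = 0$, where $\mathcal{A}$ is the extended generator of the piecewise-deterministic Markov process $X^{(r)}$. The exponential form (\ref{eq:f_tilde}) is engineered so that the jump contributions to $\mathcal{A} f_\theta$ vanish identically, leaving only the drift. Between jumps $u_i$ decreases at unit rate and $v_i$ decreases at unit rate whenever $z_i > 0$; differentiating each factor in (\ref{eq:f_tilde}) shows that the drift part of $\mathcal{A} f_\theta(z,u,v)$ equals $\big[\sum_i \alpha_i \gamma_i(\theta_i) + \sum_i \mu^{(r)}_i \xi_i(\theta) \mathbbm{1}_{\{z_i>0\}}\big]\, f_\theta(z,u,v)$. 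At an external arrival at station $i$ (triggered when $u_i$ hits $0$), $z_i$ increments by one and $u_i$ resets to $T'_{e,i}/\alpha_i$ for a fresh $T'_{e,i}$, so the conditional expected jump multiplier of $f_\theta$ is $e^{\theta_i}\E[e^{-\gamma_i(\theta_i) T_{e,i}(1)}] = 1$ by (\ref{eq:eta-g}). At a service completion at station $i$, averaging over the routing vector $\Phi_i$ and the fresh service time yields $\big(P_{i0} e^{-\theta_i} + \sum_k P_{ik} e^{\theta_k - \theta_i}\big)\E[e^{-\xi_i(\theta) T_{s,i}(1)}] = 1$ by (\ref{eq:xi-g}). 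Hence $\mathcal{A} f_\theta$ reduces to the drift expression above.

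Taking expectation under the stationary law and using (\ref{eq:idle}) to write $\E[f_\theta(X^{(r)})\mathbbm{1}_{\{Z_i^{(r)}>0\}}] = \psi^{(r)}(\theta) - (1-\rho_i^{(r)})\psi^{(r)}_i(\theta) = \psi^{(r)}(\theta) - (\mu^{(r)}_i - \lambda_i)\psi^{(r)}_i(\theta)/\mu^{(r)}_i$, the splitting $\mu^{(r)}_i = \lambda_i + (\mu^{(r)}_i-\lambda_i)$ together with the definition (\ref{eq:Q}) of $Q(\theta)$ delivers (\ref{eq:bar-b}) after a direct rearrangement. The main technical obstacle is the martingale justification of Dynkin's identity: $f_\theta$ is generally \emph{unbounded} since $\gamma_i(\theta_i)$ or $\xi_i(\theta)$ can be negative on $\R^J_-$, making the factors $e^{-\alpha_i \gamma_i(\theta_i) u_i}$ or $e^{-\mu^{(r)}_i\xi_i(\theta) v_i}$ grow in the residual-time coordinates. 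Assumption~\ref{ass:bound-support} resolves this cleanly: bounded support of the unitized interarrival and service times forces $u_i = R_{e,i}^{(r)}$ and $v_i = R_{s,i}^{(r)}$ to stay uniformly bounded along sample paths, so $f_\theta$ is bounded and a standard Dynkin-plus-stationarity argument applies verbatim. Dispensing with Assumption~\ref{ass:bound-support} requires a truncation/approximation scheme, which is precisely the motivation for the asymptotic version of the BAR developed in Section~\ref{sec:abar}.
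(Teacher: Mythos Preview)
Your proposal is correct and follows essentially the same route as the paper's proof: both establish well-definedness of $\gamma_i,\xi_i$ via strict monotonicity of the moment generating function under (\ref{eq:Tp}) (the paper cites Lemma~4.1 of \cite{BravDaiMiya2017} for this), and both derive (\ref{eq:bar-b}) by observing that $f_\theta$ is bounded under Assumption~\ref{ass:bound-support}, that the defining equations (\ref{eq:eta-g})--(\ref{eq:xi-g}) kill the jump/Palm contributions so that $\E[\mathcal{A}f_\theta(X^{(r)})]=0$, and then rearranging using (\ref{eq:idle}) and the split $\mu^{(r)}_i=\lambda_i+(\mu^{(r)}_i-\lambda_i)$. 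The only cosmetic difference is terminology: you speak of the extended generator and Dynkin's identity, while the paper packages the same computation as the BAR (\ref{eq:fullbar}) with Palm terms that vanish under (\ref{eq:palm-kille})--(\ref{eq:palm-kills}).
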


\begin{proof}[Proof of Lemma~\ref{lem:t-bar}]
  First, condition
  (\ref{eq:Tp}) and Lemma~4.1 of \cite{BravDaiMiya2017} imply that
  $\gamma_i(\theta_i)$ and $\xi_i(\theta)$ are well defined for all
  $\theta\in \R^J$. We now prove that
  $\psi^{(r)}$, together with $(\psi^{(r)}_1, \ldots, \psi^{(r)}_J)$ satisfies
  (\ref{eq:bar-b}). For each $\theta\in \R^J_-$, consider
  $f_\theta(x)$ defined in (\ref{eq:f_tilde}). Because
  $\gamma_i(\theta_i)$ satisfies (\ref{eq:eta-g}), $f_\theta(x)$
  satisfies (\ref{eq:palm-kille}).  Similarly, since $\xi_i(\theta)$
  satisfies (\ref{eq:xi-g}), $f_\theta(x)$ satisfies
  (\ref{eq:palm-kills}). Since $T_{e,i}(1)$ and $T_{s,i}(1)$ have
  bounded supports for each $i\in \mathcal{J}$, we can restrict
  $(u, v)$ component of state $x=(z, u, v)$ in a bounded domain. Thus,
  $f_\theta\in \mathcal{D}$ for each $\theta\in \R^{J}_-$.
It is easy to see
  \begin{align*}
    \mathcal{A}f_\theta(x) = & -\sum_{i\in \mathcal{J}} \alpha_i \gamma_i(\theta_i) f_{\theta}(x)  - \sum_{i\in \mathcal{J}} \mu_i^{(r)}\xi_i(\theta) f_{\theta}(x)\mathbbm{1}(z_i>0)\\
                           = & -\sum_{i\in \mathcal{J}} \Big(\alpha_i \gamma_i(\theta_i)+ \mu^{(r)}_i\xi_i(\theta)\Big)f_\theta(x) + \sum_{i\in \mathcal{J}} \mu_i^{(r)} \xi_i(\theta) f_{\theta}(x)\mathbbm{1}(z_i=0)\\
                          = & -\sum_{i\in \mathcal{J}} \Big(\alpha_i \gamma_i(\theta_i)+ \lambda_i\xi_i(\theta)\Big)f_\theta(x) - \sum_{i\in \mathcal{J}}(\mu^{(r)}_i -\lambda_i)f_{\theta} (x) \\ {}\quad  
&+ \sum_{i\in \mathcal{J}} \mu_i^{(r)} \xi_i(\theta) f_{\theta}(x)\mathbbm{1}(z_i=0).
  \end{align*}
Thus,   (\ref{eq:bar-no-palm}) implies
  \begin{align}
  &  \sum_{i\in \mathcal{J}} \Big(\alpha_i \gamma_i(\theta_i)+ \lambda_i\xi_i(\theta)\Big) \E[f_\theta(X^{(r)})] +
    \sum_{i\in \mathcal{J}}(\mu^{(r)}_i -\lambda_i)\E[f_{\theta} (X^{(r)})]
\nonumber    \\
    & - \sum_{i\in \mathcal{J}} \mu_i^{(r)} \xi_i(\theta) \E[ f_{\theta}(X^{(r)})\mathbbm{1}(Z^{(r)}_i=0)]=0. \label{eq:Af}
  \end{align}
  Finally, (\ref{eq:bar-b}) follows from \eq{Af}, the definition of
  $\psi^{(r)}(\theta)$ and $\psi^{(r)}_i(\theta)$, and
  \begin{align*}
    & \mu_i^{(r)} \E[ f_{\theta}(X^{(r)})\mathbbm{1}(Z^{(r)}_i=0)]
      =     \mu_i^{(r)} \E[ f_{\theta}(X^{(r)}) \mid Z^{(r)}_i=0]
      \Prob\{Z^{(r)}_i=0\} \\
   & = \mu^{(r)}_i(1-\rho^{(r)}_i)      \psi^{(r)}_i(\theta)
    = (\mu^{(r)}_i-\lambda_i)    \psi^{(r)}_i(\theta),
  \end{align*}
  where the second equality follows from (\ref{eq:idle}).

\end{proof}

In proving Proposition~\ref{lem:ite}, one works with MGF
$\phi^{(r)}(\theta)$ for some appropriate choices of $\theta$, whereas
transform-BAR \eq{bar-b} is stated in terms $\psi^{(r)}(\theta)$ and
$\psi^{(r)}_i(\theta)$.  The following lemma implies that working with
$\phi^{(r)}$ is equivalent to working with
$\psi^{(r)}$.  In the rest of this paper, we adopt the standard notation
that $f(r)=o(g(r))$ means $f(r)/g(r)\to 0 $ as $r\to 0$.
\begin{lemma}[SSC]\label{lem:ssc}
  Assume $\theta{(r)}\in \R^J_-$  satisfying
  \begin{align}
    \label{eq:thetaor}
     \abs{\theta{(r)}}\le c \, r, \quad r\in (0, 1)
  \end{align}
  for some constant $c>0$. Then,
\begin{align}
  & \phi^{(r)}\big(\theta{(r)}\big)-\psi^{(r)}\big(\theta{(r)}\big) =o(1)\quad \text{ as } r\to 0,
    \label{eq:ssc-phi} \\
    & \phi^{(r)}_j\big(\theta{(r)}\big)-\psi^{(r)}_j\big(\theta{(r)}\big) = o(1) \quad \text{ as } r\to 0, \quad j\in \mathcal{J},    \label{eq:ssc-phij} 
\end{align}
where $\phi^{(r)}_j(\theta)= \E[ \exp{(\sum_{j\in \mathcal{J}}\theta_jZ_j^{(r)})} \mid Z_j^{(r)}=0]$ is the MGF of $Z^{(r)}$ conditioning on $Z^{(r)}_j=0$.
\end{lemma}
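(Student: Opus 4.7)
The plan is to estimate the difference $\psi^{(r)}(\theta(r)) - \phi^{(r)}(\theta(r))$ directly by factoring out the queue-length contribution and showing that the residual-time correction is uniformly $1+o(1)$. From the definition of $f_\theta$ in (\ref{eq:f_tilde}),
$$
\psi^{(r)}(\theta) - \phi^{(r)}(\theta) = \E\!\left[e^{\langle \theta, Z^{(r)}\rangle}\bigl(e^{-\sum_{i} \alpha_i \gamma_i(\theta_i) R_{e,i}^{(r)} - \sum_{i} \mu^{(r)}_i \xi_i(\theta) R_{s,i}^{(r)}} - 1\bigr)\right].
$$
Since $\theta(r)\in \R^J_-$ and $Z^{(r)}\ge 0$ componentwise, $e^{\langle \theta(r), Z^{(r)}\rangle}\le 1$ almost surely, so it suffices to control the exponent in the correction factor uniformly.

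Next I would use the Taylor expansions (\ref{eq:t1})--(\ref{eq:t2}) and the hypothesis $|\theta(r)|\le cr$ to obtain $\gamma_i(\theta_i(r)) = O(r)$ and $\xi_i(\theta(r)) = O(r)$ (since $\gamma_i(0) = 0$ and $\bar{\xi}_i$ is linear with $\bar{\xi}_i(0) = 0$). Under Assumption~\ref{ass:bound-support}, $T_{e,i}(1)$ and $T_{s,i}(1)$ are bounded above by some constant $B$, so the actual interarrival times $T_{e,i}(1)/\alpha_i$ are bounded by $B/\alpha_i$, and because $\mu^{(r)}_i \to \lambda_i > 0$ the actual service times $T_{s,i}(1)/\mu^{(r)}_i$ are bounded by a constant uniformly in $r$. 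Consequently the residual times $R_{e,i}^{(r)}$ and $R_{s,i}^{(r)}$ are almost-surely bounded by deterministic constants, uniformly in $r$. Combining, the exponent in the correction factor is $O(r)$ uniformly in $\omega$ and $r$.

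Applying the elementary inequality $|e^a - 1| \le |a|e^{|a|}$ with $|a| = O(r)$, the correction factor inside the expectation differs from $1$ by $O(r)$ uniformly, and combined with $|e^{\langle \theta(r), Z^{(r)}\rangle}|\le 1$ we conclude $|\psi^{(r)}(\theta(r)) - \phi^{(r)}(\theta(r))| = O(r) = o(1)$, which is (\ref{eq:ssc-phi}). The conditional version (\ref{eq:ssc-phij}) follows from the identical argument under the conditional law $\Prob(\,\cdot\mid Z_j^{(r)}=0)$, since conditioning affects neither the deterministic uniform bound on the residuals nor the bound $e^{\langle \theta(r), Z^{(r)}\rangle}\le 1$.

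The only mildly delicate point is ensuring the uniform-in-$r$ boundedness of the \emph{actual} residual service time $R_{s,i}^{(r)}$, which requires that $\mu^{(r)}_i$ be bounded away from $0$ as $r\downarrow 0$; this is guaranteed by $\mu^{(r)}_i \to \lambda_i > 0$ under Assumption~\ref{ass:mscale}. Apart from that, the argument is routine and avoids any dominated-convergence or tightness machinery. I anticipate no serious obstacles in carrying it out; the whole point of restricting to the bounded-support setting in this section is precisely to make such sample-path estimates trivial, with the more subtle version for general primitives handled later in Section~\ref{sec:abar}.
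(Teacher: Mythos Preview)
Your proof is correct and follows the same skeleton as the paper's: factor out $e^{\langle\theta,Z^{(r)}\rangle}\le 1$, use the Taylor expansions to get $\gamma_i,\xi_i=O(r)$, and apply $|e^a-1|\le|a|e^{|a|}$ to the residual-time correction. The paper does not give a separate proof of Lemma~\ref{lem:ssc} but treats it as a special case of the general Lemma~\ref{lem:ssc-unbound}; in that general proof the residual contribution is controlled only in expectation (via $\E[R_{e,i}^{(r)}],\E[R_{s,i}^{(r)}]=O(1)$ from \cite{BravDaiMiya2017}), and the conditional version~(\ref{eq:ssc-phij}) then requires H\"older's inequality together with $(J+\delta_0)$th moments of the residuals to handle the small conditioning probability $\Prob\{Z_j^{(r)}=0\}\sim r^j$. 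Your direct use of the almost-sure deterministic bound on $R_{e,i}^{(r)},R_{s,i}^{(r)}$ available under Assumption~\ref{ass:bound-support} is a cleaner shortcut in this special setting: since the correction factor is $1+O(r)$ pointwise, the conditional case is immediate with no moment or H\"older machinery needed. This is a legitimate simplification rather than a genuinely different argument.
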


Lemma~\ref{lem:ssc} holds for any sequence $\theta(r)\in \R^J_-$
satisfying \eq{thetaor}.
To motivate the term SSC used in Lemma~\ref{lem:ssc},
	recall that  the steady-state variable is
	$X^{(r)}=(Z^{(r)},R^{(r)}_e, R^{(r)}_s )$.  Lemma~\ref{lem:ssc} says
	that the component $(R^{(r)}_e, R^{(r)}_s)$ in the state variable is negligible (or
	collapsing to zero) in the sense that
	\begin{align}\label{eq:ssc-a}
		\sum_{i\in {\cal J}} \gamma_i(\theta_i{(r)}) R^{(r)}_{e,i}  +
		\sum_{i\in {\cal J}}\xi_i(\theta{(r)})  R^{(r)}_{s,i}  \to 0
	\end{align}
	almost surely because
	$\gamma_i(\theta_i(r))\to 0$
	and  $\xi_i(\theta(r))\to 0$ (see Lemma~\ref{lem:taylor} below), and
	$(R^{(r)}_e, R^{(r)}_s)$ has bounded support.

 To prove Proposition~\ref{lem:ite}, we need
to prove (\ref{eq:ite}) for any vector $\eta\in \R^J_-$ and any
station $j\in \mathcal{J} $. In the rest of this section, we fix a
vector $\eta\in \R^J_-$ and a station $j\in \mathcal{J} $.  We will
apply Lemma~\ref{lem:ssc} for two specific choices of $\theta(r)$:
$\theta^{(r)}$ and $\tilde{\theta}^{(r)}$.  Recall $(w_{ij})$ defined in (\ref{eq:ww1}) and (\ref{eq:ww2}).  We define 
  $\theta^{(r)}$ to be $\theta=(\theta_1, \ldots, \theta_J)'$ with the components being
  given as the following:
 \begin{align}
   & \theta_k =  \eta_k r^k, \quad k =j+1, \ldots, J, \label{eq:w1}\\
   & \theta_j = \eta_j r^j + \frac{1}{1-w_{jj}} \sum_{i=j+1}^J w_{ji} \theta_i,  \label{eq:w2}\\
   & \theta_\ell = w_{\ell j} \theta_j +  \sum_{i=j+1}^{J}w_{\ell i} \theta_i, \quad \ell=1, \ldots, j-1. \label{eq:w3}
 \end{align}
Similarly, 
 	we define $\tilde{\theta}^{(r)}$ to be 
 	$\tilde{\theta}=(\tilde{\theta}_1, \ldots, \tilde{\theta}_J )'$  with the components being given as the following:
 	\begin{align}
 		& \tilde{\theta}_k= \eta_k r^k, \quad k =j+1, \ldots, J, \label{eq:wt1}\\
 		&\tilde{\theta}_j = \eta_j r^{j+1/2} + \frac{1}{1-w_{jj}} \sum_{i=j+1}^J w_{ji}\tilde{\theta}_i, \label{eq:wt3}\\
 		& \tilde{\theta}_\ell = w_{\ell j} \tilde{\theta}_j +  \sum_{i=j+1}^{J}w_{\ell i} \tilde{\theta}_i, \quad \ell=1, \ldots, j-1.      \label{eq:wt5} 
 	\end{align}
        In the rest of this section, we use $\theta^{(r)}$ to denote
        the $\theta$ defined by \eq{w1}-\eq{w3}, and
        $\tilde{\theta}^{(r)}$ to denote the $\tilde{\theta}$ defined
        by (\ref{eq:wt1})-(\ref{eq:wt5}), omitting the dependence on
        $\eta\in \R^J_-$ and the station index $j\in\mathcal{J}$.  The
        reason for this particular form of $\theta^{(r)}$ and
        $\tilde{\theta}^{(r)}$ will be clear from the proof
        of Proposition~\ref{lem:ite} below.
 The uniform moment bound  (\ref{eq:Jmoment}) implies the following lemma.
 \begin{lemma}\label{lem:ssc-phi}
   Let $\theta^{(r)}$ and $\tilde{\theta}^{(r)}$ be defined in \eq{w1}-\eq{w3} and \eq{wt1}-\eq{wt5}, respectively. As $r\to 0$,
 \begin{align}
   & \phi^{(r)}(\theta^{(r)}) -\phi^{(r)}(0,\ldots, 0, r^j\eta_j, \ldots, r^J\eta_J) =o(1), \label{eq:ssc1} \\
   & \phi^{(r)}(\tilde{\theta}^{(r)}) -\phi^{(r)}(0,\ldots, 0, 0, r^{j+1}\eta_{j+1}, \ldots, r^J\eta_J) =o(1), \label{eq:ssc3}\\
   & \phi^{(r)}_j(\theta^{(r)}) -\phi^{(r)}_j(0,\ldots, 0, r^{j}\eta_{j}, \ldots, r^J\eta_J) =o(1), \label{eq:ssc2}\\
       & \phi^{(r)}_j(\tilde{\theta}^{(r)}) -\phi^{(r)}_j(0,\ldots, 0,  0, r^{j+1}\eta_{j+1}, \ldots, r^J\eta_J) =o(1). \label{eq:ssc4}       
 \end{align}   
 \end{lemma}
 % where
%  \begin{align*}
%    \gamma(\theta{(r)})=\big(\gamma_1(\theta_1(r)), \ldots, \gamma_J(\theta_J(r))\big) \quad \text{ and } \quad
%    \xi(\theta{(r)})=\big(\xi_1(\theta(r)), \ldots, \xi_J(\theta(r))\big).
%  \end{align*}
% %
% Also, Lemma~\ref{lem:ssc-phi} follows from 
%  \begin{align*}
%    &   (\theta^{(r)}_1 Z^{(r)}_1, \ldots, \theta^{(r)}_{j-1} Z^{(r)}_{j-1}, \ldots, (\theta^{(r)}_j -r^j \eta_j) Z^{(r)}_j) \to 0 \\
%    &     (\tilde{\theta}^{(r)}_1 Z^{(r)}_1, \ldots, \tilde{\theta}^{(r)}_{j-1} Z^{(r)}_{j-1}, \ldots, \tilde{\theta}^{(r)}_j  Z^{(r)}_j) \to 0
%  \end{align*}
%  in $L^1$ because 
%  $\theta^{(r)}_i = w_{ij}\theta^{(r)}_j + o(r^{j})=w_{ij}\eta_j
%  (1-\rho^{(r)}_j) +o(r^j)$ for $i=1, \ldots, j-1$,
%  $\theta^{(r)}_j = r^j \eta_j + o(r^j)=(1-\rho^{(r)}_j) \eta_j +
%  o(r^j)$, and $(1-\rho^{(r)}_j)Z^{(r)}_i \to 0$ in expectation for
%  $i=1, \ldots, j-1$.  

Both Lemma~\ref{lem:ssc} and Lemma~\ref{lem:ssc-phi} will be proved in Appendix \ref{sec:ssc} under the heading of state space collapse
(SSC) results. We will use transform-BAR \eq{bar-b} to prove
Proposition~\ref{lem:ite}. In addition to replacing $\psi^{(r)}$ by $\phi^{(r)}$ following Lemma~\ref{lem:ssc}, we need
Taylor expansions in the following lemma.
\begin{lemma}\label{lem:taylor}
 As
$\theta\to 0$, 
\begin{align}
 & \gamma_i(\theta_i) = \theta_i+ \frac{1}{2} \gamma^*_i(\theta_i) + o(\theta_i^2), \label{eq:t1}\\
 & \xi_i(\theta) = \bar{\xi}_i(\theta)  + \frac{1}{2}\xi^*_i(\theta) + o(\abs{\theta}^2),\label{eq:t2}
\end{align}
where   $\abs{\theta}=\sum_{i}\abs{\theta_i}$, and
\begin{align}
  &  \gamma_i^*(\theta_i) = c_{e,i}^2\theta_i^2, \label{eq:eta-s}   \\
  & \bar{\xi}_i(\theta) = \sum_{j\in \mathcal{J}}P_{ij}\theta_j - \theta_i, \label{eq:xi-b}\\
		& \xi^*_i(\theta) = c_{s,i}^2\Big ( \sum_{j\in \mathcal{J}}P_{ij}\theta_j - \theta_i \Big )^2 + \sum_{j\in \mathcal{J}}P_{ij}\theta_j^2 - \Big (\sum_{j\in \mathcal{J}}P_{ij}\theta_j \Big )^2.\label{eq:xi-s} 
		% & Q^*(\theta) = \frac{1}{2} \sum_{i\in \mathcal{J}} \big (\alpha_i \gamma_i^*(\theta_i) + \lambda_i \xi_i^*(\theta)\big), \label{eq:Q-s}  \text{ with }\\
\end{align}
 $c^2_{e,i}$ and $c^2_{s,i}$ are
the variances of the unitized random variables $T_{e,i}$ and
$T_{s,i}$, respectively. 
\end{lemma}
 In (\ref{eq:eta-s}) and (\ref{eq:xi-s}), $c^2_{e,i}$ and $c^2_{s,i}$  are also equal to the squared
coefficients of variation (SCVs) of the corresponding interarrival and
service time distributions.
When the primitive distributions are exponential, $\gamma_i(\theta_i)$ and $\xi_i(\theta)$ have the following explicit expressions: for each $\theta\in \R^J$ and $i\in \mathcal{J}$
\begin{align}
  & \gamma_i(\theta_i) = e^{\theta_i}-1, \label{eq:eta}\\
  & \xi_i(\theta) =P_{i0}\big(e^{-\theta_i}-1\big) + \sum_{j\in \cal{J} }P_{ij}\big(e^{\theta_j-\theta_i}-1\big).\label{eq:xi}
\end{align}
A direct Taylor expansion of the functions in (\ref{eq:eta}) and
(\ref{eq:xi}) is consistent with the expansions in (\ref{eq:t1}) and
(\ref{eq:t2}) with $c^2_{e,i}=1$ and $c^2_{s, i}=1$.  Although
$\gamma_i(\theta_i)$ and $\xi_i(\theta)$ are defined implicitly in
general through (\ref{eq:eta-g}) and (\ref{eq:xi-g}), the Taylor
expansions \eq{eta-s} and \eq{xi-s} are what we need to access these
two implicit functions.
% In other words, given Taylor expansions
% \eq{eta-g} and \eq{xi-g}, the current proof of \thm{main} for general
% primitive distribution is identical to the one for exponential
% distributions. Of course, this fact is not helpful in the generalized
% Jackson network setting becasue when primitive distributions are
% exponential, the corresponding Jackson network has an (exact)
% product-form stationary distribution. However, the multiclass queueing
% network setting, this observation can be very helpful, allowing
% researchers first explore exponential primitive distribution
% setting
We delay the proof of  Lemma~\ref{lem:taylor} to Section~\ref{sec:abar} by considering the proof of Lemma~\ref{lem:taylor} as a special case
of the proof of Lemma~\ref{lem:taylor-psi}.
% by setting $\gamma^{(r)}(u)$ in \eq{gr} to be $u$ 
% and by using the bounded supports assumption.

\begin{proof}[Proof of Proposition~\ref{lem:ite}]
  We now prove \eq{ite} and hence the proposition   under Assumption~\ref{ass:bound-support}.
  Fix an $\eta\in \R^J_-$ and fix a station $j\in {\cal J}$.  Recall the definitions of $\theta^{(r)}$ and $\tilde{\theta}^{(r)}$
  in \eq{w1}-\eq{w3} and \eq{wt1}-\eq{wt5}, respectively.
% It follows from SSC (\ref{eq:ssc-phi}) that to prove Lemma~\ref{lem:ite}, it suffices $\psi$ version of (\ref{eq:ite}), namely,
% \begin{align}
%   \psi^{(r)}(0,\ldots, 0, r^j\eta_j, \ldots, r^J\eta_J) -\frac{1}{1-d_j\eta_j}
%   \psi^{(r)}(0,\ldots, 0, r^{j+1}\eta_{j+1}, \ldots, r^J\eta_J) \to 0, \label{eq:ite1}
% \end{align}
 We will prove
 \begin{align}
   & \psi^{(r)}(\theta^{(r)})-\frac{1}{1-d_j\eta_j} \psi^{(r)}_j(\theta^{(r)}) = o(1) \quad \text{ as } r\to 0, \quad j\in {\cal J} \label{eq:bar1} \\
&  \psi^{(r)}(\tilde{\theta}^{(r)})- \psi^{(r)}_j(\tilde{\theta}^{(r)})=o(1)  \quad \text{ as } r\to 0, \quad j\in {\cal J}\setminus \{J\}.\label{eq:bar2}
 \end{align}
 Assuming (\ref{eq:bar1})-(\ref{eq:bar2}), we now complete the proof of
 Proposition~\ref{lem:ite}.

 It follows from  (\ref{eq:bar1}),  SSC  results (\ref{eq:ssc-phi})-(\ref{eq:ssc-phij}), 
 (\ref{eq:ssc1}), and (\ref{eq:ssc2}) that for each $j\in {\cal J}$
  \begin{align}
   \label{eq:ite2}
   & \phi^{(r)}(0,\ldots, 0, r^j\eta_j, \ldots, r^J\eta_J )-\frac{1}{1-d_j\eta_j} \phi^{(r)}_j( 0,\ldots, 0, r^j\eta_j, \ldots, r^J\eta_J)\to 0.
 \end{align}
 For $j=J$, $ \phi^{(r)}_J( 0,\ldots, 0,  r^J\eta_J)=1$  by definition,
and  hence \eq{ite2} implies \eq{ite}, proving Proposition~\ref{lem:ite} for $j=J$. For $j\in {\cal J}\setminus\{J\}$,
 it follows from  (\ref{eq:bar2}),
  SSC  results (\ref{eq:ssc-phi})-(\ref{eq:ssc-phij}), 
 (\ref{eq:ssc3}), and (\ref{eq:ssc4}) that
%  By  (\ref{eq:ssc3}) and (\ref{eq:ssc4}), one has 
 \begin{align}\label{eq:ite3}
   & \phi^{(r)}(0,\ldots, 0, r^{j+1}\eta_{j+1}, \ldots, r^J\eta_J )- \phi^{(r)}_j( 0,\ldots, 0, r^{j+1}\eta_{j+1}, \ldots, r^J\eta_J)\to 0.
 \end{align}
By definition, $ \phi^{(r)}_j( 0,\ldots, 0, r^j\eta_j, \ldots, r^J\eta_J)=
\phi^{(r)}_j( 0,\ldots, 0, r^{j+1}\eta_{j+1}, \ldots, r^J\eta_J)$. Therefore, 
(\ref{eq:ite2}) and (\ref{eq:ite3}) imply (\ref{eq:ite}). Thus, we have proved Proposition~\ref{lem:ite}, assuming (\ref{eq:bar1})-(\ref{eq:bar2}) hold.

We now  use transform-BAR (\ref{eq:bar-b}) to prove
(\ref{eq:bar1})-(\ref{eq:bar2}). We first prove \eq{bar1} for $j\in {\cal J}$.
Recall the definitions of $\bar{\xi}_i$, $\gamma^*_i$,
and $\xi^*_i$ in (\ref{eq:eta-s})-(\ref{eq:xi-s}).
It  follows from traffic equation (\ref{trafficeq}) that
\begin{align}
  \sum_{i\in \mathcal{J}} \bigl ( \alpha_i \theta_i + \lambda_i \bar{\xi}_i(\theta) \bigr )=0 \quad \text{ for each } \theta\in \R^J. \label{eq:flow-balance}
\end{align}
By Taylor expansions (\ref{eq:t1})-(\ref{eq:t2}), the $Q$ in (\ref{eq:Q}) has the following Taylor expansion
\begin{align}
  Q(\theta) & =   \sum_{i\in \mathcal{J}} \bigl ( \alpha_i \theta_i + \lambda_i \bar{\xi}_i(\theta) \bigr )+ \frac{1}{2} \sum_{i\in \mathcal{J}} \big (\alpha_i \gamma_i^*(\theta_i) + \lambda_i \xi_i^*(\theta)\big) +  o(\abs{\theta}^2) \nonumber \\
 & =  Q^*(\theta) + o(\abs{\theta}^2) \quad \text{ as } \theta\to 0,   \label{eq:taylor-Q}
\end{align}
where
\begin{align}
        & Q^*(\theta) = \frac{1}{2} \sum_{i\in \mathcal{J}} \big (\alpha_i \gamma_i^*(\theta_i) + \lambda_i \xi_i^*(\theta)\big). \label{eq:Q-s}
\end{align}
As  a result, the transform-BAR (\ref{eq:bar-b}) implies that
 for any sequence  $\theta=\theta{(r)}\in \R^J_-$ with $\theta{(r)}\to 0$  as $r\to 0$,
\begin{align}
  &    Q^*(\theta)\psi^{(r)}(\theta)+ \sum_{i\in \mathcal{J} } (\mu^{(r)}_i - \lambda_i ) \bar{\xi}_i(\theta) \bigl ( \psi^{(r)}(\theta)-\psi^{(r)}_i(\theta) \bigr ) \nonumber \\
  & =-\frac{1}{2}\sum_{i\in\mathcal{J}} (\mu^{(r)}_i - \lambda_i ) {\xi}^*_i(\theta) \bigl ( \psi^{(r)}(\theta)-\psi^{(r)}_i(\theta) \bigr ) + o(\abs{\theta}^2).    \label{eq:abar-1}
\end{align}
% We keep the justification of (\ref{eq:abar-1}) brief here because a
% general version of (\ref{eq:abar-1}) will be presented and proved in
% Section~\ref{sec:abar}.

% In deriving (\ref{eq:abar-1}), we have used the fact that for some $r_0\in (0,1)$
% \begin{align*}
% \sup_{r\in (0, r_0)}\sup_{z\in \Z^J_+, (u, v)\in C}  f_{\theta(r)}(z, u, v)<\infty,
% \end{align*}
% where $C\subset \R^J_+\times\R^J_+$ is a compact set.

% \begin{align}
%   &    Q^*(\theta)\psi^{(r)}(\theta)+ \sum_{i\in \mathcal{J} } (\mu^{(r)}_i - \lambda_i ) \bar{\xi}_i(\theta) \bigl ( \psi^{(r)}(\theta)-\psi^{(r)}_i(\theta) \bigr ) \nonumber \\
%   & =-\frac{1}{2}\sum_{i\in\mathcal{J}} (\mu^{(r)}_i - \lambda_i ) {\xi}^*_i(\theta) \bigl ( \psi^{(r)}(\theta)-\psi^{(r)}_i(\theta) \bigr ) + o(r^J\abs{\theta})+ o(\abs{\theta}^2).    \label{eq:abar-1}
% \end{align}

Recall $\theta^{(r)}$ defined in (\ref{eq:w1})-(\ref{eq:w3}).
One can verify
that
\begin{align}
  &  \bar{\xi}_i(\theta^{(r)})=0 \quad i=1, \ldots, j-1,\label{eq:pb1}\\
  &  \bar{\xi}_j(\theta^{(r)})=(1-w_{jj}) r^j \eta_j;\label{eq:pb2}
\end{align}
see Lemma~\ref{lem:linear} for a proof.
From the definition of  $\theta^{(r)}$,
\begin{align*}
  &\theta^{(r)}_\ell=w_{\ell j} r^j\eta_j +o(r^j), \quad \ell =1, \ldots, j-1, \\
  & \theta^{(r)}_j = r^j\eta_j +o(r^j), \\
  &  \theta^{(r)}_k = o(r^j), \quad k=j+1, \ldots, J.
\end{align*}
% $\theta^{(r)}_i=O(r^j)$ for each $i\in \mathcal{J}$, where $f(r)=O(g(r))$ means $\abs{f(r)} \le c_1 g(r)$ as $r\to 0 $ for some constant $c_1>0$.
% Thus, $\bar{\xi}_i(\theta^{(r)})=O(r^j)$ and ${\xi}^*_i(\theta^{(r)})=O(r^{2j})$ for $i\in \mathcal{J}$
%and
Therefore,
under the  multi-scale heavy traffic condition (\ref{htpar}),
\begin{align}
  & (\mu^{(r)}_i-\lambda_i) {\xi}^*_i(\theta^{(r)}) = o(r^{2j})  \quad  i\in \mathcal{J}, \label{eq:pb3}\\
  & (\mu^{(r)}_i-\lambda_i) \bar{\xi}_i(\theta^{(r)}) = o(r^{2j}) \quad i=j+1,\ldots, J.\label{eq:pb4}
\end{align}
It follows from (\ref{eq:pb1})-(\ref{eq:pb4}) and (\ref{eq:abar-1}) that
\begin{align}\label{eq:abar-j}
  &    Q^*\big(\theta^{(r)}\big)\psi^{(r)}\big(\theta^{(r)}\big) + (1-w_{jj})r^{2j} \eta_j
    \Bigl ( \psi^{(r)}\big(\theta^{(r)}\big)-\psi^{(r)}_j\big(\theta^{(r)}\big) \Bigr ) = o(r^{2j}).
\end{align}
It is easy to check that 
\begin{align}
  & \lim_{r\to 0} \frac{Q^*(\theta^{(r)})}{r^{2j}} = \eta_j^2 Q^*(w_{1j},\ldots, w_{j-1,j}, 1, 0, \ldots, 0),
    \label{eq:Qlimit1} 
\end{align}
which is equal to $\frac{1}{2}\sigma^2_j \eta_j^2$ by  Lemma~\ref{lem:sigma}.
Dividing both sides of (\ref{eq:abar-j}) by $r^{2j}$, we have
\begin{align*}
  (\sigma^2_j/2) \eta_j^2\psi^{(r)}(\theta^{(r)}) + (1-w_{jj}) \eta_j  \bigl ( \psi^{(r)}(\theta^{(r)})-\psi^{(r)}_j(\theta^{(r)}) \bigr ) = o(1),
\end{align*}
from which (\ref{eq:bar1}) follows.

We next prove (\ref{eq:bar2}) for $j\in {\cal J}\setminus \{J\}$.
Recall $\tilde{\theta}^{(r)}$ defined in
(\ref{eq:wt1})-(\ref{eq:wt5}). From the definition of 
$\tilde{\theta}^{(r)}$,
	  \begin{align*}
		&\tilde{\theta}^{(r)}_\ell=w_{\ell j} r^{j+1/2}\eta_j +o(r^{j+1/2}), \quad \ell =1, \ldots, j-1, \\
		& \tilde{\theta}^{(r)}_j = r^{j+1/2}\eta_j +o(r^{j+1/2}), \\
		&  \tilde{\theta}^{(r)}_k = o(r^{j+1/2}), \quad k=j+1, \ldots, J.
          \end{align*}
          Therefore, $\bar{\xi}_i(\tilde{\theta}^{(r)})=O(r^{j+1/2})$ and ${\xi}^*_i(\tilde{\theta}^{(r)})=O(r^{2j+1})$ for $i\in \mathcal{J}$.
	Using  (\ref{eq:lin1}) and  multi-scale heavy traffic condition (\ref{htpar}), the transform-BAR \eq{bar-b} leads to
	\begin{align*}
		&    Q^*(\tilde{\theta}^{(r)})\psi^{(r)}(\tilde{\theta}^{(r)}) + r^j\bar{\xi}_j(\tilde{\theta}^{(r)})
		\bigl ( \psi^{(r)}(\tilde{\theta}^{(r)})-\psi^{(r)}_j(\tilde{\theta}^{(r)}) \bigr ) = o(r^{2j+1}).
	\end{align*}
	Using (\ref{eq:lin3}), the preceding equation becomes
	\begin{align*}
		&    Q^*(\tilde{\theta}^{(r)})\psi^{(r)}(\tilde{\theta}^{(r)}) + (1-w_{jj})r^{2j+1/2} \eta_j
		\bigl ( \psi^{(r)}(\tilde{\theta}^{(r)})-\psi^{(r)}_j(\tilde{\theta}^{(r)}) \bigr ) = o(r^{2j+1}).
	\end{align*}        
        Similar to \eq{Qlimit1}, one has 
	\begin{align*}
	\lim_{r\to 0} \frac{Q^*(\tilde{\theta}^{(r)})}{r^{2j+1}} = \eta_j^2 Q^*(w_{1j}, \ldots, w_{j-1,j}, 1, 0, \ldots, 0).   % \label{eq:Qlimit2}
	\end{align*}
	Dividing both sides by $r^{2j+1/2}$ and using (\ref{eq:Qlimit3}), we have
	\begin{align*}
		\sqrt{r} (\sigma^2_j/2) \eta_j^2\psi^{(r)}(\tilde{\theta}^{(r)})  +    (1-w_{jj}) \eta_j  \bigl ( \psi^{(r)}(\tilde{\theta}^{(r)})-\psi^{(r)}_j(\tilde{\theta}^{(r)}) \bigr ) = o(\sqrt{r}),
	\end{align*}
	from which (\ref{eq:bar2}) follows.

\end{proof}

This completes the proof of Proposition~\ref{lem:ite} and hence \thm{main} under the bounded support assumption. In Section~\ref{sec:abar}, we  proceed to a general proof of
Proposition~\ref{lem:ite}, using a standard truncation argument.

\section{Asymptotic BAR and the truncation arguments}
\label{sec:abar}
As outlined at the beginning of Section~\ref{sec:outline}, to prove
Theorem~\ref{thm:main}, it suffices to prove
Proposition~\ref{lem:ite}.
Section~\ref{sec:proof-b} provided a proof of
Proposition~\ref{lem:ite}  under Assumption~\ref{ass:bound-support}
(bounded supports).  This section details the changes needed to prove
Proposition~\ref{lem:ite} without
Assumption~\ref{ass:bound-support}. 
The key is to develop an
asymptotic version of the transform-BAR (\ref{eq:abar}) to replace
(\ref{eq:abar-1}); the latter was  used for  the proof of
Proposition~\ref{lem:ite} in Section~\ref{sec:proof-b}.

Throughout this section, we assume
Assumptions~\ref{ass:mscale}-\ref{ass:stable}.
% In
% Section~\ref{sec:abar}, we will introduce a truncated version of the
% function $f_\theta$ defined in (\ref{eq:f_tilde}), and employ the BAR
% introduced in this section to complete the proof of
% Proposition~\ref{lem:ite} without the bounded-support assumption.
When the unitized interarrival and service times $T_{e,i}(1)$ and
$T_{s,i}(1)$ are bounded, we have employed test function $f_\theta(x)$ in
(\ref{eq:f_tilde}) in Section~\ref{sec:proof-b}, where $\gamma_i(\theta_i)$ and $\xi_i(\theta)$
satisfy (\ref{eq:eta-g}) and (\ref{eq:xi-g}) in order for
$f_{\theta}(x)$ to satisfy (\ref{eq:palm-kille}) and
(\ref{eq:palm-kills}). Without assuming Assumption~\ref{ass:bound-support}, we will work with the
truncated random variables $T_{e,i}(1)\wedge r^{-1}$ and  $T_{s,i}(1)\wedge r^{-1}$ for $r\in (0, 1)$, where $a\wedge b\equiv \min(a, b)$ for $a, b\in \R$.
Equivalently, for each $\theta\in \R^J_-$ and each $r\in (0,1)$, we wish to work
with  test  function $f_{(\theta, r)}(x)$ for $x=(z, u, v)\in \Z^J_+\times \R^J_+\times \R^J_+$ via
\begin{align}
  \label{eq:test_t}
  & f_{(\theta, r)} (x) = \exp\Bigl ( \langle \theta, z\rangle -
    \sum_{j} \gamma_j(\theta_j, r) (\alpha_j u_j\wedge r^{-1})
    - \sum_{j} \xi_j(\theta, r) (\mu_j^{(r)}v_j\wedge r^{-1})
    \Bigr ).
\end{align}
In order for $f_{(\theta, r)}$ to satisfy (\ref{eq:palm-kille}) and
(\ref{eq:palm-kills}), $\gamma_i(\theta_i, r)$ and $\xi_i(\theta, r)$
need to satisfy
\begin{align}
  & e^{\theta_j} \E[e^{-\gamma_j(\theta_j, r) (T_{e,j}(1)\wedge r^{-1})}]=1,\label{eq:eta_t} \\
  & \bigl ( \sum_{k\in \mathcal{J}} P_{jk}e^{\theta_k-\theta_j}+ P_{j0}e^{-\theta_j} \bigr ) \E[e^{-\xi_j(\theta,r) (T_{s,j}(1)\wedge r^{-1})}]=1.\label{eq:xi_t}
\end{align}
Without condition (\ref{eq:Tp}) in Assumption~\ref{ass:bound-support},
$\gamma_i(\theta_i, r)$ and $\xi_i(\theta, r)$ are not guaranteed to
be well defined for every $\theta\in \R^J_-$.  Lemma 3.2 of
\cite{BravDaiMiya2017} says that when $\abs{\theta}<M$ for small
enough constant $M>0$, $\gamma_j(\theta_j, r)$ and $\xi_j(\theta, r)$
are defined for each $r\in (0, 1)$.  Clearly, when the interarrival
and service times are bounded by $1/r$,
$\gamma_j(\theta_j,r)=\gamma_j(\theta_j)$ and
$\xi_j(\theta, r)=\xi_j(\theta)$. Therefore, the proof of
Proposition~\ref{lem:ite} in Section~\ref{sec:proof-b} is a special
case of the proof presented below.  Most readers will find it easier
to first read the proof in Section~\ref{sec:proof-b} before proceeding
further.

	% where,  for a vector $w\in \R^J$ and  a scalar $b$, $w\wedge b=(w_1\wedge b, \ldots, w_J\wedge b)'$, and 
	% \begin{align*}
	%   & \gamma^{(r)}(\theta = \big(\gamma_1(\theta_1, r\alpha_1), \ldots, \gamma_J(\theta_J, r\alpha_J)\big)',  \\
	%   &\xi^{(r)}(\theta) = \big(\xi_1(\theta, r\mu^{(r)}_1), \ldots, \xi_J(\theta, r\mu^{(r)}_J) \big)'.
              % \end{align*}
In the following,  define $\Theta=\{\theta \in \R^J_-: \abs{\theta}<M\}$.
For each $\theta\in \Theta$,  define 
	\begin{align} \label{eq:psi}
	{\psi}^{(r)}(\theta) = \E[ f_{(\theta, r)}(X^{(r)})], \quad
	{\psi}^{(r)}_j(\theta) = \E\big[ f_{(\theta, r)}(X^{(r)}) \mid Z^{(r)}_j=0\big].
	\end{align}
        The symbol $\psi^{(r)}$ is identical to the one used in
        (\ref{eq:psi0}). The reusage of the symbol is
        intentional. First, when the primitive distributions have
        bounded supports, the two definitions are indeed identical
        when $r$ is small enough. Second, the $\psi^{(r)}$ in this
        section always follows the definition in (\ref{eq:psi}), but
        the same symbol reminds us to trace expressions and equations
        involving $\psi^{(r)}$ in Section~\ref{sec:proof-b}, which could be
        copied verbatim into this section. This fact allows us to
        focus on those aspects that affect current proof differently.

        We first establish three lemmas that are analogs to
        those in Section~\ref{sec:proof-b}. 
        The following lemma replaces
        Lemma~\ref{lem:t-bar}, and the resulting equation (\ref{eq:bar-g}) is the asymptotic version of the transform-BAR (\ref{eq:bar-b}).
	
%	Recall $\tilde{f}_{\theta}(x)$ in (\ref{eq:f_tilde}), define
%	\begin{align*}
%	\tilde{\psi}^{(r)}(\theta) = \E[ \tilde{f}_\theta(X^{(r)})],  \quad
%	\tilde{\psi}^{(r)}_i(\theta) = \E[ \tilde{f}_\theta(X^{(r)}) \mid Z^{(r)}_i=0], \quad i\in \mathcal{J}.
%	\end{align*}
%	\begin{lemma}
%		Assume   Assumption~\ref{ass:stable} holds and that interarrival and service times are bounded, satisfying condition (\ref{eq:Tp}). Then the following BAR holds:
%		\label{lem:bar-b}
%		\begin{align}
%		\label{eq:bar-b}
%		Q(\theta) {\tilde{\psi}}^{(r)}(\theta) + \sum_{i\in \mathcal{J}} (\mu^{(r)}_i-\lambda_i) \xi_i(\theta) \bigl (\tilde{\psi}^{(r)}(\theta)-  \tilde{\psi}_i^{(r)}(\theta) \bigr ) =0, \quad \theta\in \R^J_-,
%		\end{align}
%		where $\gamma_i(\theta_i)$, $\xi_i(\theta)$ are defined in \eq{eta-g}, \eq{xi-g}, and 
%		\begin{align*}
%		Q(\theta)=\sum_{i\in \mathcal{J}}\bigl ( \alpha_i \gamma_i(\theta_i) + \lambda_i \xi_i(\theta) \bigr ).
%		\end{align*}
%	\end{lemma}
%	\begin{proof}
%		It follows from \cite{BravDaiMiya2017} that
%		\begin{align}
%		\label{eq:BARbound}
%		&\sum_{i \in \mathcal{J}} \alpha_i \gamma_i(\theta_i) \E[\tilde f_{\theta}(Z^{(r)})]
%		+\sum_{i\in\cal{J}}\mu_i^{(r)} \xi_i(\theta) \E[\tilde f_\theta(Z^{(r)})\mathbbm{1}(Z^{(r)}_i>0)]=0.
%		\end{align}
%		Utilizing  (\ref{eq:idle}), one can verify that \eq{BARbound} is equivalent to \eq{bar-b}.
%	\end{proof}
%	
%	

\begin{lemma}	\label{lem:bar-g}
  Assume   Assumption~\ref{ass:stable} holds. Then, for each $r\in (0, 1)$,
  \begin{align}
    \label{eq:bar-g}
    Q^{(r)}(\theta) {{\psi}}^{(r)}(\theta) + \sum_{i\in \mathcal{J}} (\mu^{(r)}_i-\lambda_i) \xi_i(\theta, r) \bigl ({\psi}^{(r)}(\theta)-  {\psi}_i^{(r)}(\theta) \bigr ) 
		= \epsilon(\theta, r), \quad \theta\in \Theta,
		\end{align}
		where
		\begin{align}
		{Q}^{(r)}(\theta) &=  \sum_{i\in \mathcal{J}}\Big(\alpha_i \gamma_i(\theta_i, r)+ \lambda_i \xi_i(\theta, r) \Big),\label{eq:Q-t}     \\
		\epsilon(\theta, r) &  = \sum_{i\in \mathcal{J}} \alpha_i\gamma_i(\theta_i, r) \E\big[ \mathbbm{1}(\alpha_i R^{(r)}_{e, i}> 1/r) f_{(\theta,r)}(X^{(r)})\big]\nonumber \\
		& + \sum_{i\in \mathcal{J}}  \mu^{(r)}_i \xi_i (\theta, r)
		\E\big[ \mathbbm{1}(\mu_i^{(r)}R^{(r)}_{s, i}> 1/r, Z^{(r)}_i>0)f_{(\theta,r)}(X^{(r)})\big], \label{eq:epsilon}
		\end{align}  
		and $\gamma_i(\theta_i, r)$, $\xi_i(\theta, r)$ are defined in \eq{eta_t} and \eq{xi_t}, respectively.
	\end{lemma}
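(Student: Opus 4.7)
The plan is to mimic the proof of Lemma~\ref{lem:t-bar} but with the test function $f_{(\theta,r)}$ instead of $f_\theta$, tracking the extra terms created by the truncation at $r^{-1}$. The strategy is: verify membership of $f_{(\theta,r)}$ in $\mathcal{D}$, check that the defining equations (\ref{eq:eta_t})--(\ref{eq:xi_t}) make the Palm-jump terms vanish, apply the no-Palm BAR (\ref{eq:bar-no-palm}), and finally identify the residual boundary pieces as $\epsilon(\theta,r)$.

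First I would check $f_{(\theta,r)}\in\mathcal{D}$. For $\theta\in\Theta$ the quantities $\gamma_i(\theta_i,r)$ and $\xi_i(\theta,r)$ are well defined by Lemma~3.2 of \cite{BravDaiMiya2017}; since $\theta\le 0$ and the truncated terms $\alpha_i u_i\wedge r^{-1}$ and $\mu_i^{(r)}v_i\wedge r^{-1}$ are bounded by $r^{-1}$, the function $f_{(\theta,r)}$ is bounded. The partial derivatives with respect to $u_i$ and $v_i$ exist and are bounded except on the hyperplanes $\alpha_i u_i=r^{-1}$ and $\mu_i^{(r)}v_i=r^{-1}$, placing $f_{(\theta,r)}$ in $\mathcal{D}$. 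Next I would verify the Palm kill conditions: a direct computation using (\ref{eq:palm-change}) together with (\ref{eq:eta_t}) shows (\ref{eq:palm-kille}), and similarly (\ref{eq:xi_t}) gives (\ref{eq:palm-kills}); the truncation inside the exponent is exactly the reason for using the truncated random variables in the definitions of $\gamma_i(\theta_i,r)$ and $\xi_i(\theta,r)$. Thus the reduced BAR (\ref{eq:bar-no-palm}) holds for $f_{(\theta,r)}$.

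Next I would compute $\mathcal{A}f_{(\theta,r)}$. Using (\ref{eq:a}), a chain-rule computation gives, off of the truncation boundaries,
\begin{align*}
\mathcal{A}f_{(\theta,r)}(x) = \sum_{i\in\mathcal{J}}\alpha_i\gamma_i(\theta_i,r)\mathbbm{1}(\alpha_i u_i<r^{-1})f_{(\theta,r)}(x)
+ \sum_{i\in\mathcal{J}}\mu_i^{(r)}\xi_i(\theta,r)\mathbbm{1}(\mu_i^{(r)}v_i<r^{-1})\mathbbm{1}(z_i>0)f_{(\theta,r)}(x).
\end{align*}
Writing $\mathbbm{1}(\alpha_i u_i<r^{-1})=1-\mathbbm{1}(\alpha_i u_i\ge r^{-1})$ and similarly for the service residuals, then taking expectations and invoking (\ref{eq:bar-no-palm}), the ``1'' parts produce
\begin{align*}
\sum_{i\in\mathcal{J}}\alpha_i\gamma_i(\theta_i,r)\psi^{(r)}(\theta)+\sum_{i\in\mathcal{J}}\mu_i^{(r)}\xi_i(\theta,r)\E\bigl[f_{(\theta,r)}(X^{(r)})\mathbbm{1}(Z_i^{(r)}>0)\bigr],
\end{align*}
while the subtracted indicator terms reassemble exactly into $\epsilon(\theta,r)$ as defined in (\ref{eq:epsilon}).

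To finish, I would split $\mathbbm{1}(Z^{(r)}_i>0)=1-\mathbbm{1}(Z^{(r)}_i=0)$ and apply (\ref{eq:idle}), yielding
\begin{align*}
\mu_i^{(r)}\E\bigl[f_{(\theta,r)}(X^{(r)})\mathbbm{1}(Z^{(r)}_i=0)\bigr]=\mu_i^{(r)}(1-\rho_i^{(r)})\psi_i^{(r)}(\theta)=(\mu_i^{(r)}-\lambda_i)\psi_i^{(r)}(\theta).
\end{align*}
Combining and writing $\mu_i^{(r)}=\lambda_i+(\mu_i^{(r)}-\lambda_i)$ collects the $\lambda_i\xi_i(\theta,r)$ contributions into $Q^{(r)}(\theta)$ of (\ref{eq:Q-t}) and leaves the heavy-traffic drift terms $(\mu_i^{(r)}-\lambda_i)\xi_i(\theta,r)(\psi^{(r)}(\theta)-\psi_i^{(r)}(\theta))$, producing (\ref{eq:bar-g}). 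The only subtle step I anticipate is justifying the application of (\ref{eq:bar-no-palm}) on the piecewise-smooth $f_{(\theta,r)}$; this is handled since $\mathcal{D}$ permits finitely many non-differentiability points per slice and the derivative remains uniformly bounded, so the fundamental-theorem-of-calculus argument underlying BAR goes through essentially unchanged.
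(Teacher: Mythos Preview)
Your proposal is correct and follows essentially the same route as the paper's own proof: verify $f_{(\theta,r)}\in\mathcal{D}$ and the Palm-kill conditions (\ref{eq:palm-kille})--(\ref{eq:palm-kills}), apply the no-Palm BAR (\ref{eq:bar-no-palm}), compute $\mathcal{A}f_{(\theta,r)}$ with the truncation indicators, split each indicator as $1-\mathbbm{1}(\cdot>r^{-1})$ to isolate $\epsilon(\theta,r)$, and then rearrange exactly as in the proof of Lemma~\ref{lem:t-bar}. The only cosmetic difference is your use of strict versus non-strict inequalities at the truncation boundary, which is immaterial since the stationary residual-time distributions are atomless.
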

\begin{proof}
  For each $\theta\in \Theta$ and $r\in (0,1)$,
  $\gamma_i(\theta_i,r)$ and $\xi_i(\theta, r)$ are well
  defined by Lemma~3.2 of \cite{BravDaiMiya2017}. It is easy to check that  $f_{(\theta,r)}\in \mathcal{D}$, and $f_{(\theta, r)}$
          satisfies (\ref{eq:palm-kille}) and
          (\ref{eq:palm-kills}).  It follows from (\ref{eq:bar-no-palm}) that
          $\E[f_{(\theta, r)}(X^{(r)})]=0$. Note that
          \begin{align*}
            \mathcal{A}f_{(\theta, r)}(x) = & -\sum_{i\in \mathcal{J}} \alpha_i \gamma_i(\theta_i, r) \mathbbm{1}(\alpha_iu_i \le r^{-1})f_{(\theta, r)}(x)  \\
                                            & - \sum_{i\in \mathcal{J}} \mu_i^{(r)}\xi_i(\theta, r) \mathbbm{1}(\mu^{(r)}_iv_i\le r^{-1}) f_{(\theta, r)}(x)\mathbbm{1}(z_i>0) \\
            = &  -\sum_{i\in \mathcal{J}} \alpha_i \gamma_i(\theta_i, r) f_{(\theta,r)}(x)  - \sum_{i\in \mathcal{J}} \mu_i^{(r)}\xi_i(\theta, r) f_{(\theta,r)}(x)\mathbbm{1}(z_k>0)\\
                                            & +\sum_{i\in \mathcal{J}} \alpha_i \gamma_i(\theta_i, r) \mathbbm{1}(\alpha_iu_i > r^{-1})f_{(\theta, r)}(x) \\
            & + \sum_{i\in \mathcal{J}} \mu_i^{(r)}\xi_i(\theta, r) \mathbbm{1}(\mu^{(r)}_iv_i> r^{-1}) f_{(\theta, r)}(x)\mathbbm{1}(z_i>0)            
          \end{align*}
          The rest of the proof is identical to the proof of Lemma~\ref{lem:t-bar} in Section~\ref{sec:proof-b}.
              
		% It follows from Lemma 3.3 of \cite{BravDaiMiya2017} that
		% \begin{align}
		% &\sum_{i \in \mathcal{J}} \alpha_i \gamma_i(\theta_i, r) \E[f_{(\theta,r)}(Z^{(r)})\mathbbm{1}(\alpha_i R^{(r)}_{e,i}\le 1/r)]\nonumber\\
		% & \qquad    +\sum_{i\in\cal{J}}\mu_i^{(r)} \xi_i(\theta, r) \E[ f_{(\theta,r)}(Z^{(r)})\mathbbm{1}(Z^{(r)}_i>0, \mu^{(r)}_i R^{(r)}_{s,i}\le 1/r)]=0.
		% \label{eq:BARtrunc}      
		% \end{align}
		% Utilizing  (\ref{eq:idle}), one can verify that \eq{BARtrunc} is equivalent to \eq{bar-g}.
	\end{proof}
	% The following lemma follows the identical proof of Lemma 7.4 in \cite{BravDaiMiya2024}. 
	% \begin{lemma}\label{lem:xi_bound}
	% 	For each fixed $a>0$, there is a constant $d_a>0 $ such that
	% 	\begin{align}
	% 	\abs{\gamma_i(\theta_i, r)}\le d_a \abs{\theta_i}, \quad
	% 	\abs{\xi_i(\theta, r)} \le d_a \abs{\theta}, \quad i\in \mathcal{J}
	% 	\end{align}
	% 	for $r\in (0, 1)$ and $\theta\in \R^J$ with $\abs{\theta}\le a$.
	% \end{lemma}

The next lemma  replaces Lemma~\ref{lem:ssc}. Actually,
        Lemma~\ref{lem:ssc} is a special case of Lemma~\ref{lem:ssc-unbound},
 which  will be proved  in
  Appendix~\ref{sec:ssc}. The statements of these two lemmas are almost identical, except that $\theta(r)$ is required to be in $\Theta$ in Lemma~\ref{lem:ssc-unbound}.

\begin{lemma}\label{lem:ssc-unbound}
  For
  $(\psi^{(r)}, \psi^{(r)}_1, \ldots,
  \psi^{(r)}_J)$ defined in (\ref{eq:psi}), the SSC results continue to  hold. Namely, as $r\to 0$,
  \begin{align}
    & \phi^{(r)}(\theta(r))-\psi^{(r)}(\theta(r)) =o(1), \quad
      \label{eq:ssc-phi0}\\
    &  \phi_j^{(r)}(\theta(r))-\psi^{(r)}_j(\theta(r)) =o(1) \quad \text{ for } j\in \mathcal{J}
      \label{eq:ssc-phii}
  \end{align}
  for any sequence $\theta(r)\in \Theta$ satisfying
  (\ref{eq:thetaor}).          
\end{lemma}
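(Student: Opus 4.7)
The plan is to estimate the difference $\phi^{(r)}(\theta(r))-\psi^{(r)}(\theta(r))$ by controlling, in mean, the ``residual-time'' factor in the test function $f_{(\theta,r)}$. Writing
\begin{align*}
f_{(\theta(r),r)}(X^{(r)}) = e^{\langle \theta(r), Z^{(r)}\rangle}\, e^{-A_r},
\end{align*}
where $A_r = \sum_{i\in\mathcal{J}} \gamma_i(\theta_i(r),r)(\alpha_i R^{(r)}_{e,i}\wedge r^{-1}) + \sum_{i\in\mathcal{J}} \xi_i(\theta(r),r)(\mu^{(r)}_i R^{(r)}_{s,i}\wedge r^{-1})$, and using $|e^{\langle\theta(r),Z^{(r)}\rangle}|\le 1$ since $\theta(r)\in\R^J_-$, one gets
\begin{align*}
\bigl|\phi^{(r)}(\theta(r))-\psi^{(r)}(\theta(r))\bigr|\le \E\bigl[|1-e^{-A_r}|\bigr],
\end{align*}
and the same bound with $\E[\,\cdot\mid Z^{(r)}_j=0]$ in place of $\E$ for the conditional version. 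The task reduces to showing both these means are $o(1)$ as $r\downarrow 0$.

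I would first bound $A_r$ both pathwise and in expectation. Lemma~3.2 of \cite{BravDaiMiya2017} provides a constant $K$ (independent of $r$) such that $|\gamma_i(\theta,r)|,|\xi_i(\theta,r)|\le K|\theta|$ on $\Theta\times(0,1)$; combined with the hypothesis $|\theta(r)|\le cr$, this yields $|\gamma_i|\vee|\xi_i|\le Kcr$. Coupling with the truncation $\alpha_i u\wedge r^{-1}\le r^{-1}$ (and likewise on the service side) produces the \emph{pathwise} bound $|A_r|\le 2JKc$, so $e^{|A_r|}\le C_1$ for a constant $C_1$ independent of $r$. Using instead the other bound $\alpha_i u\wedge r^{-1}\le \alpha_i u$,
\begin{align*}
\E|A_r|\le Kcr\sum_{i\in\mathcal{J}}\Bigl(\E[\alpha_i R^{(r)}_{e,i}]+\E[\mu^{(r)}_i R^{(r)}_{s,i}]\Bigr).
\end{align*}
Under \ass{moment}, the stationary residual $\alpha_i R^{(r)}_{e,i}$ has the equilibrium distribution of $T_{e,i}(1)$ (the external renewal process does not depend on $r$) with uniformly finite mean, and $\mu^{(r)}_i R^{(r)}_{s,i}$ is either a fresh service time (when $Z^{(r)}_i=0$) or a stationary service-time residual, and in either case has uniformly finite mean. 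The elementary inequality $|1-e^{-a}|\le |a|e^{|a|}$ then gives $|\phi^{(r)}(\theta(r))-\psi^{(r)}(\theta(r))|\le C_1\E|A_r|=O(r)$, which proves \eq{ssc-phi0}.

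The main obstacle is the conditional statement \eq{ssc-phii}. Because $\Prob(Z^{(r)}_j=0)=1-\rho_j^{(r)}$ is as small as $r^J/\mu_J^{(r)}$ by \eq{idle}, the unconditional bound $\E|A_r|=O(r)$ does not automatically survive division by $\Prob(Z^{(r)}_j=0)$. I would prove $\E[|A_r|\mid Z^{(r)}_j=0]=o(1)$ component by component. The service-residual term at station $j$ is immediate from the convention in \sectn{gjn}: given $\{Z^{(r)}_j=0\}$, $\mu^{(r)}_j R^{(r)}_{s,j}\stackrel{d}{=}T_{s,j}(1)$, whose mean is $1$. For every other residual-time component, the key step is a Palm-type identity obtained by applying BAR (\ref{eq:fullbar}) to test functions of the form $g(r_{e,i})\mathbbm{1}(z_j=0)$ and $h(r_{s,k})\mathbbm{1}(z_j=0)$ (smooth truncations of the identity in the residual coordinate): combined with the uniform $Z$-moment bound of \pro{zbound}, this produces
\begin{align*}
\E\bigl[\alpha_i R^{(r)}_{e,i}\,\mathbbm{1}(Z^{(r)}_j=0)\bigr],\ \E\bigl[\mu^{(r)}_k R^{(r)}_{s,k}\,\mathbbm{1}(Z^{(r)}_j=0)\bigr]\ =\ O\bigl(\Prob(Z^{(r)}_j=0)\bigr)
\end{align*}
uniformly in $r$. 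Dividing by $\Prob(Z^{(r)}_j=0)$ yields uniformly bounded conditional first moments of the residual times, and the same $|1-e^{-a}|\le|a|e^{|a|}$ estimate then delivers $|\phi_j^{(r)}(\theta(r))-\psi_j^{(r)}(\theta(r))|=O(r)$. Making this Palm-type identity precise --- extracting the correct $O(\Prob(Z^{(r)}_j=0))$ scaling from BAR, uniformly in $r$, in the presence of unbounded residuals --- is the technical heart of the proof and is where I expect the bulk of the work to lie.
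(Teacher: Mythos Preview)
Your treatment of the unconditional statement \eq{ssc-phi0} is essentially the paper's: bound $|\gamma_i|,|\xi_i|$ by a constant times $r$, get a uniform pathwise bound on $A_r$ from the truncation, then use $|1-e^{-a}|\le |a|e^{|a|}$ together with the uniformly bounded first moments of the (unitized) residuals.

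For the conditional statement \eq{ssc-phii} you and the paper part ways. You propose to manufacture a Palm-type identity from BAR applied to test functions such as $g(r_{e,i})\mathbbm{1}(z_j=0)$, aiming for a uniform bound on $\E[\alpha_i R^{(r)}_{e,i}\mid Z^{(r)}_j=0]$. That program would have to cope with all the Palm boundary terms created by transitions into and out of $\{z_j=0\}$, and it is not clear how the $Z$-moment bound from \pro{zbound} enters to close the estimate; you are right that this would be the hard part, but it is also unnecessary. The paper sidesteps the whole issue with a one-line H\"older argument: writing $p=J+\delta_0$, $1/p+1/q=1$, and using $\Prob\{Z^{(r)}_j=0\}=r^j/\mu^{(r)}_j$,
\begin{align*}
r\,\E\bigl[R^{(r)}_{e,i}\mid Z^{(r)}_j=0\bigr]
\le r\,\bigl(\E[(R^{(r)}_{e,i})^{p}]\bigr)^{1/p}\bigl(\Prob\{Z^{(r)}_j=0\}\bigr)^{1/q-1}
= r^{1-j/p}\bigl(\E[(R^{(r)}_{e,i})^{J+\delta_0}]\bigr)^{1/p}(\mu^{(r)}_j)^{1/p},
\end{align*}
and similarly for $R^{(r)}_{s,i}$. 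Since $j\le J<J+\delta_0=p$ the exponent $1-j/p$ is positive, and the $(J+\delta_0)$th moments of the unitized residuals are uniformly bounded in $r$ (this is where \ass{moment} is used; cf.\ Lemma~6.6 of \cite{BravDaiMiya2024}). Hence $r\,\E[R^{(r)}_{e,i}\mid Z^{(r)}_j=0]=o(1)$ directly, and the conditional version follows by the same $|1-e^{-a}|\le |a|e^{|a|}$ estimate you already used. The trade-off: your route, if it can be made to work, would yield the stronger conclusion that the conditional residual means are \emph{bounded}; the paper only needs and only proves the weaker $o(r^{-1})$ bound, and buys it for free from the higher residual moments via H\"older.
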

 % Although
 %  $(\psi^{(r)}, \psi^{(r)}_1, \ldots, \psi^{(r)}_J)$ in (\ref{eq:ssc-phi0})
 %  and (\ref{eq:ssc-phii}) uses definition (\ref{eq:psi}) and the one in
 %  Lemma~\ref{lem:ssc} uses definition (\ref{eq:psi0}),    because $\theta^{(r)}\in \Theta$ when $r$ is small
 %  enough,

  % covers the proofs of  both (\ref{eq:ssc-phi})-(\ref{eq:ssc-phij}) and (\ref{eq:ssc-phi0})-(\ref{eq:ssc-phii}).

        The next lemma provides Taylor expansions for $\gamma_i(\theta_i, r)$ and $\xi_i(\theta, r)$, replacing Taylor expansions  for $\gamma_i(\theta_i)$ and $\xi_i(\theta)$ in (\ref{eq:t1}) and (\ref{eq:t2})  in Lemma~\ref{lem:taylor}.
        % The lemma holds for any seqeuence $\theta(r)$, satisfying
        % \begin{align}
        %   \label{eq:thetaor}
        %   \theta(r)=o(r) \quad \text{ as } r\to 0.
        % \end{align}
The proof is  analogous to the proof of  Lemma 7.5 in \cite{BravDaiMiya2024}.
	For completeness, we provide a full proof in Appendix~\ref{app:C}.
	\begin{lemma}\label{lem:taylor-psi}
          Let  $\theta(r)\in \Theta$,  $r\in (0,1)$,            satisfying (\ref{eq:thetaor}).
        % \begin{align}
        %   \label{eq:thetaor}
        %   \abs{\theta(r)} \le c \, r \quad \text{ as } r\to 0
        % \end{align}
        % for some constant $c>0$.
                     Denote  $\theta=\theta(r)$. Then, 
                     as $r\to 0$,
		\begin{align}
		\label{eq:eta-t-taylor}
		&  \gamma_i(\theta_i, r) = \theta_i + \frac{1}{2}\gamma_i^*(\theta_i) + o(r^{J}\theta_i) + o(\theta_i^2),  \quad i\in \mathcal{J} \\
		&    \xi_i(\theta, r) = \bar{\xi}_i(\theta) + \frac{1}{2}\xi_i^*(\theta) + o(r^{J}\abs{\theta}) + o(\abs{\theta}^2)  \quad i\in \mathcal{J}.       \label{eq:xi-t-taylor}
		\end{align}      
	\end{lemma}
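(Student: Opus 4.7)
The plan is to expand the implicit defining equations (\ref{eq:eta_t}) and (\ref{eq:xi_t}) in Taylor series about zero, decomposing the analysis into two contributions: the pure algebraic expansion of the exponential-multiplier on the left-hand side, which in the untruncated case would reproduce (\ref{eq:t1})--(\ref{eq:t2}), and a perturbation caused by replacing $T_{e,i}(1)$ (resp.\ $T_{s,i}(1)$) by its truncation at level $r^{-1}$. The moment hypothesis (\ref{eq:moment}) supplies $J+1+\delta_0$ finite moments, calibrated precisely so that the truncation error in the first moment is $o(r^J)$ and, after inversion, appears as a multiplicative factor on $\theta_i$ (resp.\ $|\theta|$) in the final expansion.

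For (\ref{eq:eta-t-taylor}) I would first invoke the implicit function theorem (in the spirit of Lemma~3.2 of \cite{BravDaiMiya2017}) applied to $h_r(\gamma,\theta_i):=e^{\theta_i}\E[e^{-\gamma(T_{e,i}(1)\wedge r^{-1})}]-1$. Because $\partial h_r/\partial \gamma$ at $(0,0)$ equals $-1$ uniformly in $r$, one obtains both a $C^2$ dependence of $\gamma_i(\theta_i,r)$ on $\theta_i$ and the a priori bound $\gamma_i(\theta_i,r)=O(\theta_i)$, uniformly in $r\in(0,1)$. Second, setting $m_k(r):=\E[(T_{e,i}(1)\wedge r^{-1})^k]$, Markov's inequality applied to the $(J+1+\delta_0)$th moment yields
\begin{align*}
0\le \E[T_{e,i}(1)^k]-m_k(r)\le \E\bigl[T_{e,i}(1)^k\mathbbm{1}(T_{e,i}(1)>r^{-1})\bigr]\le r^{J+1+\delta_0-k}\,\E\bigl[T_{e,i}(1)^{J+1+\delta_0}\bigr]
\end{align*}
for $1\le k\le J+1+\delta_0$, whence $m_1(r)=1+o(r^J)$, $m_2(r)-m_1(r)^2=c_{e,i}^2+o(r^{J-1})$, and $m_3(r)=O(1)$. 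Expanding $\log \E[e^{-\gamma(T_{e,i}(1)\wedge r^{-1})}]=-\gamma m_1(r)+\tfrac12\gamma^2(m_2(r)-m_1(r)^2)+O(\gamma^3 m_3(r))$, setting it equal to $-\theta_i$, and inverting algebraically using $\gamma=O(\theta_i)$, I obtain the claimed (\ref{eq:eta-t-taylor}).

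The argument for (\ref{eq:xi-t-taylor}) follows the same three steps with the multiplier $A_i(\theta):=P_{i0}e^{-\theta_i}+\sum_{k\in\mathcal{J}}P_{ik}e^{\theta_k-\theta_i}$ replacing $e^{\theta_i}$. A direct Taylor expansion gives $\log A_i(\theta)=\bar\xi_i(\theta)+\tfrac12\bigl[\sum_{k\in\mathcal{J}} P_{ik}\theta_k^2-\bigl(\sum_{k\in\mathcal{J}} P_{ik}\theta_k\bigr)^2\bigr]+O(|\theta|^3)$, and combining this routing-variance contribution with the $c_{s,i}^2\bar\xi_i(\theta)^2$ term coming from the truncated service time exactly reproduces $\xi_i^*(\theta)$ from (\ref{eq:xi-s}). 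The main obstacle is the second step---propagating the truncation bound so that the $o(r^J)$ error lands on the linear coefficient of $\theta_i$ (not on a strictly higher-order term), which is what ultimately makes it negligible when multiplied by the $(\mu_i^{(r)}-\lambda_i)=O(r^i)$ factor in the asymptotic BAR; this is precisely why the moment order in (\ref{eq:moment}) must be $J+1+\delta_0$. Once this moment bookkeeping is carried out, the inversion of the implicit relation is routine algebra, and the uniformity in $r$ follows from the fact that the neighborhood $\Theta$ of zero on which $\gamma_i(\cdot,r)$ and $\xi_i(\cdot,r)$ are defined is independent of $r$.
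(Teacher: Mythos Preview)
Your proposal is correct and follows essentially the same route as the paper's proof: both bound the truncation errors $\E[T_{e,i}(1)^k]-\E[(T_{e,i}(1)\wedge r^{-1})^k]$ via the $(J+1+\delta_0)$th-moment assumption (\ref{eq:moment}) to get $m_1(r)=1+o(r^J)$ and $m_2(r)-m_1(r)^2=c_{e,i}^2+o(r^{J-1})$, then read off the linear and quadratic coefficients of $\gamma_i(\cdot,r)$ and substitute. The only cosmetic difference is that the paper packages the uniform-in-$r$ control of the $o(\theta_i^2)$ remainder by citing Lemma~4.2 of \cite{BravDaiMiya2017}, whereas you sketch it directly via the cumulant expansion of $\log\E[e^{-\gamma(T_{e,i}(1)\wedge r^{-1})}]$; both lead to the identical expansion $\gamma_i(\theta_i,r)=\theta_i/m_1(r)+\tfrac12(m_2(r)-m_1(r)^2)m_1(r)^{-3}\theta_i^2+o(\theta_i^2)$.
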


 We now complete the proof of Proposition~\ref{lem:ite} without Assumption~\ref{ass:bound-support}.
              
 \begin{proof}[Proof of Proposition~\ref{lem:ite} (without Assumption~\ref{ass:bound-support})]
  Assume Assumptions~\ref{ass:mscale}-\ref{ass:stable} all hold.   
  Examine the proof  of Proposition~\ref{lem:ite} in Section~\ref{sec:proof-b} under the bounded
  support Assumption~\ref{ass:bound-support}. With Lemma~\ref{lem:ssc-unbound} replacing Lemma~\ref{lem:ssc}, the remaining step
    is to prove equations (\ref{eq:bar1}) and (\ref{eq:bar2})
    for 
  $(\psi^{(r)}, \psi^{(r)}_1, \ldots, \psi^{(r)}_J)$ defined in
  (\ref{eq:psi}).

 To prove (\ref{eq:bar1}) and (\ref{eq:bar2}) for
  $(\psi^{(r)}, \psi^{(r)}_1, \ldots, \psi^{(r)}_J)$ defined in
  (\ref{eq:psi}),  the key is to extend
  asymptotic BAR (\ref{eq:abar-1}) to an asymptotic version that is appropriate for  the current setting:
 as $r\to 0$, 
		\begin{align}
		&    Q^*(\theta)\psi^{(r)}(\theta)+ \sum_{i\in \mathcal{J}} (\mu^{(r)}_i-\lambda_i) \bar{\xi}_i(\theta) \bigl ( \psi^{(r)}(\theta)-\psi^{(r)}_i(\theta) \bigr ) \nonumber \\
		& =-\frac{1}{2}\sum_{i\in\mathcal{J}} (\mu^{(r)}_i-\lambda_i) {\xi}^*_i(\theta) \bigl ( \psi^{(r)}(\theta)-\psi^{(r)}_i(\theta) \bigr ) + o(r^J\abs{\theta}) + o(\abs{\theta}^2)      \label{eq:abar}
		\end{align}
		for any sequence  $\theta=\theta(r)$ satisfying (\ref{eq:thetaor}).
  
%  Taylor  expansions (\ref{eq:t1})-(\ref{eq:t2}) and (\ref{eq:taylor-Q}),

To prove (\ref{eq:abar}), our starting point is (\ref{eq:bar-g}) in Lemma~\ref{lem:bar-g}. We first prove
\begin{align}\label{eq:epsilonrJ}
  \epsilon(\theta(r), r) = o(\abs{\theta(r)}r^J),
\end{align}
where $\epsilon(\theta, r)$ are defined in \eq{epsilon}. To prove (\ref{eq:epsilonrJ}), we first claim
\begin{align}
  &  \Prob\big\{ \alpha_j R^{(r)}_{e,j}>1/r\big\} =o(r^{J}), \label{eq:Re-t} \\
  & \Prob\big\{ \mu^{(r)}_j R^{(r)}_{s,j}>1/r\big\}  =o(r^{J}).\label{eq:Rs-t}
\end{align}
Indeed, 
\begin{align*}
  \Prob\{\alpha_j R^{(r)}_{e, j}> 1/r\} & \le r^{J}\E[(\alpha_j R^{(r)}_{e, j})^{J}\mathbbm{1}(\alpha_j R^{(r)}_{e, j}> 1/r)] \\
                                        & =
                                          \frac{r^J}{J+1} \E\big[ \big(T_{e,j}(1)\big)^{J+1} \mathbbm{1}(T_{e,j}(1)> r^{-1})\big] =o(r^J),
\end{align*}
where the first equality follows from Lemma 6.4 of \cite{BravDaiMiya2024},
and the second equality follows from condition \eq{moment}.
Equation \eq{Rs-t} can be proved similarly.

We next claim there exists a $r_0\in (0,1)$ and $M>0$ such that
\begin{align}\label{eq:fbound}
  & \sup_{r\in (0, r_0)} \sup_{x=(z,u, v) }f_{(\theta(r),r)}(x) \le M.
\end{align}
Indeed, by Lemma~\ref{lem:taylor-psi}, there exists $r_0\in (0,1)$ and $c_1>0$ such that for $r\in (0, r_0)$
\begin{align}\label{eq:xior}
  &\abs{  \gamma_i(\theta_i(r), r)}\le c_1 \abs{\theta_i(r)}\le c_1 c \, r, \quad
 \abs{  \xi_i(\theta(r), r)}\le c_1 \abs{\theta(r)}\le c_1 c \, r,
\end{align}
from which (\ref{eq:fbound}) follows. It is clear that
(\ref{eq:epsilonrJ}) follows from (\ref{eq:Re-t})-(\ref{eq:xior}).
        
To complete the proof of (\ref{eq:abar}), it remains to prove that for $\theta(r)$ satisfying (\ref{eq:thetaor}), the following holds
\begin{align*}
  & \Big( Q^{(r)}(\theta) {{\psi}}^{(r)}(\theta) + \sum_{i\in \mathcal{J}} (\mu^{(r)}_i-\lambda_i) \xi_i(\theta, r) \bigl ({\psi}^{(r)}(\theta)-  {\psi}_i^{(r)}(\theta) \bigr ) \Big) \\
  & - \Big( Q^*(\theta) {{\psi}}^{(r)}(\theta) + \sum_{i\in \mathcal{J}} (\mu^{(r)}_i-\lambda_i) \bigl(\bar{\xi}_i(\theta)+\frac{1}{2}\xi^*_i(\theta) \bigr)\bigl ({\psi}^{(r)}(\theta)-  {\psi}_i^{(r)}(\theta) \bigr ) \Big)   \\
  & = o(r^J\abs{\theta}(r)) + o(\abs{\theta(r)}^2),
\end{align*}
which follows from the Taylor expansions (\ref{eq:eta-t-taylor}) and
(\ref{eq:xi-t-taylor}), definition of $Q^{(r)}$ in (\ref{eq:Q-t}), and
(\ref{eq:flow-balance}), and the fact that $\psi^{(r)}(\theta(r))\le 1$ and $\psi^{(r)}_i(\theta(r))\le 1$  for each $r\in (0, 1)$ and $i\in \mathcal{J}$.
\end{proof}

	%   we emphasize that 
	% it is important to understand the uniformity in (\ref{eq:abar}) properly.
	% For $r\in (0,1)$ and $\theta\in \R^J_-$ with $\abs{\theta}<\delta$, define
	% \begin{align*}
	%   \epsilon_1( \theta, r) & =Q^*(\theta)\psi^{(r)}(\theta)+ \sum_{i\in \mathcal{J}} (\mu^{r}_i-\lambda_i) \bar{\xi}_i(\theta) \bigl ( \psi^{(r)}(\theta)-\psi^{(r)}_i(\theta) \bigr )  \\
	%   & +\frac{1}{2}\sum_{i\in\mathcal{J}} (\mu^{(r)}_i-\lambda_i) {\xi}^*_i(\theta) \bigl ( \psi^{(r)}(\theta)-\psi^{(r)}_i(\theta) \bigr ).
	% \end{align*}
	% Asymptotic BAR (\ref{eq:abar}) means that
	% \begin{align}\label{eq:uniform}
	%   \lim_{\delta\to 0} \sup_{\theta\in \R^J_-, \abs{\theta}<\delta }\frac{\sup_{r\in (0,1)} \abs{\epsilon_1(\theta, r)}}{\abs{\theta}^2}=0.
	% \end{align}
	% Here, we assume without loss of generality that
	% $\sup_{r\in (0,1)}\mu^{(r)}_i<\infty$ for each $i\in \mathcal{J}$.
	% \begin{proof}[Proof of Lemma~\ref{lem:abar}]
	
	%   \end{proof}
	%We remark that when all distributions are exponential, one can
	%verify that $\gamma_i(\theta_i)$ and $\xi_i(\theta)$ defined in
	%(\ref{eq:eta-g}) and (\ref{eq:xi-g}) are identical to the ones in
	%(\ref{eq:eta}) and (\ref{eq:xi}). Therefore, Taylor expansions for
	%$\eta_i(\theta_i)$ and $\xi_i(\theta)$ can be developed trivially,
	%which are still given by (\ref{eq:eta-t-taylor}) and
	%(\ref{eq:xi-t-taylor}) with $c_{e,i}^2=1$ and $c_{s,i}^2=1$,
	%omitting $o(r^J\abs{\theta_i})$ and $o(r^J\abs{\theta})$ terms
	%because the lack of truncation in this case.

	\bibliography{dai20240229}
	
	\appendix
	\begin{appendix}
		\addtocontents{toc}{\protect\setcounter{tocdepth}{1}}
		\makeatletter
		\addtocontents{toc}{%
			\begingroup
			\let\protect\l@chapter\protect\l@section
			\let\protect\l@section\protect\l@subsection
		}
		\makeatother
		
		\section{Supporting lemmas to prove Proposition~\ref{lem:ite}}
	\label{sec:lem53}
To assist the proof of Proposition~\ref{lem:ite}, we  state and prove the following three lemmas.
	\begin{lemma}\label{lem:w}
		Fix $j\in \mathcal{J}$. The following equations
		\begin{equation}\label{eq:w}
		w_{ij} = P_{ij} + \sum_{k<j} P_{ik}w_{kj},  \quad i\in\cal{J}
		\end{equation}
		have a unique solution $(w_{1j}, \ldots, w_{J,j})$ that is given by (\ref{eq:ww1}) and (\ref{eq:ww2}).
	\end{lemma}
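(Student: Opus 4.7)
The plan is to exploit the triangular structure of the system (\ref{eq:w}) with respect to the split $\mathcal{J} = [1:j-1] \cup [j:J]$: the sum $\sum_{k<j} P_{ik} w_{kj}$ involves only unknowns indexed by $k \in [1:j-1]$, so the equations for $i < j$ form a closed linear subsystem in $(w_{1j}, \ldots, w_{j-1,j})$, whereas the equations for $i \geq j$ simply express $w_{ij}$ in terms of that subsystem's solution. I solve the lower block first and then substitute into the upper block.

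First, restrict (\ref{eq:w}) to $i \in [1:j-1]$. Moving the sum to the left side and stacking yields $(I - P_{j-1}) v = P_{[1:j-1], j}$ where $v = (w_{1j}, \ldots, w_{j-1,j})'$. To invert, I will check that $I - P_{j-1}$ is non-singular: since the routing matrix $P$ is transient by assumption, $\sum_{n\geq 0} P^n$ converges entrywise to $(I-P)^{-1}$; a straightforward path-counting argument gives $(P_{j-1}^n)_{ik} \leq (P^n)_{ik}$ for all $i,k < j$ (every path that stays in $[1:j-1]$ is in particular a path in $\mathcal{J}$), so $\sum_{n\geq 0} P_{j-1}^n$ converges and equals $(I - P_{j-1})^{-1}$. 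This delivers both existence and uniqueness of $v$, matching (\ref{eq:ww1}).

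Second, for $i \geq j$, substitute the formula for $v$ into (\ref{eq:w}) to get $w_{ij} = P_{ij} + P_{i, [1:j-1]} (I - P_{j-1})^{-1} P_{[1:j-1], j}$, and stacking over $i = j, \ldots, J$ yields (\ref{eq:ww2}). Uniqueness for these components is immediate since each $w_{ij}$ with $i \geq j$ is expressed as a deterministic function of the (already unique) lower block.

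The only step requiring care is the invertibility of $I - P_{j-1}$, and this is a standard consequence of transience of the substochastic matrix $P$; the rest of the proof is bookkeeping in block matrix notation. I expect the total write-up to be short, roughly a third of a page.
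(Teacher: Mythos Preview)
Your proposal is correct and follows the same approach as the paper: split the system into the first $j-1$ equations, which form a closed linear system $(I-P_{j-1})v = P_{[1:j-1],j}$ yielding (\ref{eq:ww1}), and then substitute into the remaining $J-j+1$ equations to obtain (\ref{eq:ww2}). The paper's proof is in fact terser than yours---it simply asserts that the first $j-1$ equations uniquely determine $(w_{1j},\ldots,w_{j-1,j})'$ without spelling out the invertibility of $I-P_{j-1}$---so your explicit justification via transience of $P$ and the domination $(P_{j-1}^n)_{ik}\le (P^n)_{ik}$ is a welcome addition.
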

	\begin{proof}
		The first $j-1$ equations in (\ref{eq:w}) uniquely solve
		$(w_{1j}, \ldots, w_{j-1,j})'$, which is given by (\ref{eq:ww1}).
		(\ref{eq:ww2}) is simply the vector version of the last $J-j+1$
		equations with $(w_{1j}, \ldots, w_{j-1,j})'$ being substituted by the
		right side of (\ref{eq:ww1}).
	\end{proof}
	The quantity $w_{ij}$ has the following probabilistic interpretations.
	Consider the DTMC on state space $\{0\}\cup \mathcal{J}$ corresponding to the routing matrix $P$. Let $w_{ij}$ be the probability
	that starting from state $i$, the DTMC will eventually visit state $j$, avoiding visiting states in $\{0, j+1, \ldots, J\}$. By the ``first-step'' method,  $w_{ij}$ satisfies (\ref{eq:w}). 
	
\begin{lemma}[Solution to a linear system]
  \label{lem:linear}
  Fix $\eta\in \R^J_-$ and $j\in \mathcal{J}$. Recall $\theta=\theta^{(r)}$ defined in (\ref{eq:w1})-(\ref{eq:w3})
and $\tilde{\theta}=\tilde{\theta}^{(r)}$ defined in (\ref{eq:wt1})-(\ref{eq:wt5}).
  Then $\theta$ and $\tilde{\theta}$ satisfy the following equations.
  \begin{align}
    &    \bar{\xi}_\ell(\theta)=0,  \quad    \bar{\xi}_\ell(\tilde{\theta})=0, \quad\ell =1, \ldots, j-1,  \label{eq:lin1}\\
    &  \bar{\xi}_{j}(\theta) = (1-w_{jj}) r^j \eta_j, \label{eq:lin2}\\
    &  \bar{\xi}_{j}(\tilde{\theta}) = (1-w_{jj}) r^{j+1/2} \eta_j.\label{eq:lin3}
  \end{align}
\end{lemma}
\begin{proof}
  With $\theta_k=r^k\eta_k$ for $k=j+1,\ldots, J$ being fixed, the linear system
  of equations
  \begin{align*}
    &\bar{\xi}_\ell(\theta)=0, \quad \ell =1, \ldots, j-1, \\
    &\bar{\xi}_j(\theta)= (1-w_{jj})r^j \eta_j 
  \end{align*}
  has $j$ equations and $j$ variables $(\theta_1, \ldots, \theta_j)$. It has a unique solution $(\theta_1, \ldots, \theta_j)'$ given by (\ref{eq:w2}) and (\ref{eq:w3}). Similarly,  fixing $\tilde{\theta}_k=r^k\eta_k$ for $k=j+1,\ldots, J$,
  	the linear system  of equations
  	\begin{align*}
  		&\bar{\xi}_\ell(\tilde{\theta})=0, \quad \ell =1, \ldots, j-1, \\
  		&\bar{\xi}_j(\tilde{\theta})= (1-w_{jj})r^{j+1/2} \eta_j 
  	\end{align*}
  	has a unique solution $(\tilde \theta_1, \ldots, \tilde \theta_j)'$ given by (\ref{eq:wt3}) and (\ref{eq:wt5}).
\end{proof}
	
	\begin{lemma}\label{lem:sigma}
		Fix $j\in \mathcal{J}$.
		Recall  $\sigma^2_j$ is defined in (\ref{eq:sigma}). Then,
		\begin{align}
		\sigma^2_j =2 Q^*(w_{1j},\ldots, w_{j-1,j}, 1, 0, \ldots, 0), \label{eq:Qlimit3}
		\end{align}
		where $Q^*$ is defined in \eq{Q-s}.
	\end{lemma}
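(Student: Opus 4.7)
The plan is to directly substitute $\theta^* = (w_{1j},\ldots,w_{j-1,j},1,0,\ldots,0)$ into the definition \eqref{eq:Q-s} of $Q^*$ and then reduce the resulting expression to $\sigma_j^2$ using the recursion \eqref{eq:w} satisfied by $(w_{ij})$ together with the traffic equation \eqref{trafficeq}. The critical observation that unlocks the whole computation is that, by \lem{w}, the vector $\theta^*$ is designed so that
\[
\sum_{k\in\mathcal{J}} P_{ik}\theta^*_k \;=\; P_{ij} + \sum_{k<j} P_{ik} w_{kj} \;=\; w_{ij} \quad \text{for every } i\in\mathcal{J}.
\]
Thus $\bar\xi_i(\theta^*) = w_{ij}-\theta^*_i$, which equals $0$ for $i<j$, equals $w_{jj}-1$ for $i=j$, and equals $w_{ij}$ for $i>j$. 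This collapses the $c_{s,i}^2$-part of $\xi_i^*(\theta^*)$ into precisely the three $c^2$-terms appearing in \eqref{eq:sigma}.

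Next I would handle the $\gamma_i^*$-contribution and the ``non-$c^2$'' part of $\xi_i^*(\theta^*)$. The former contributes $\alpha_i c_{e,i}^2 w_{ij}^2$ for $i<j$, $\alpha_j c_{e,j}^2$ for $i=j$, and nothing for $i>j$; these match the $c_{e,i}^2$ pieces of $\sigma_j^2$. For the latter, using $\sum_k P_{ik}\theta^*_k = w_{ij}$ I would write
\[
\sum_{k\in\mathcal{J}} P_{ik}(\theta^*_k)^2 - \Big(\sum_{k\in\mathcal{J}} P_{ik}\theta^*_k\Big)^2 \;=\; \sum_{k<j} P_{ik}w_{kj}^2 + P_{ij} - w_{ij}^2,
\]
multiply by $\lambda_i$, and sum over $i$. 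Applying $\sum_i\lambda_i P_{ik}=\lambda_k-\alpha_k$ from the traffic equation allows the $\sum_{k<j}P_{ik}w_{kj}^2$ piece to be rewritten using $\lambda_k-\alpha_k$, and the $\sum_i \lambda_i w_{ij}^2$ piece to be split by the position of $i$ relative to $j$.

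After these manipulations, $2Q^*(\theta^*)$ differs from $\sigma_j^2$ only by the ``cross-term'' contributions $w_{ij}(1-w_{ij})$ that appear in \eqref{eq:sigma}. Verifying that these match amounts to the identity
\[
\sum_{k<j}\alpha_k w_{kj} + \lambda_j w_{jj} + \sum_{i>j}\lambda_i w_{ij} \;=\; \lambda_j - \alpha_j,
\]
which I would establish by multiplying \eqref{eq:w} by $\lambda_i$, summing over $i$, and invoking the traffic equation twice (once to evaluate $\sum_i \lambda_i P_{ij}=\lambda_j-\alpha_j$ and once to evaluate $\sum_i\lambda_i P_{ik}=\lambda_k-\alpha_k$ for each $k<j$); after cancelling $\sum_{k<j}\lambda_k w_{kj}$ on both sides, the stated identity falls out.

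The main obstacle is purely bookkeeping: keeping the three index ranges ($i<j$, $i=j$, $i>j$) straight and recognizing that the only substantive algebraic input beyond definitions is the single traffic-equation identity displayed above. There is no analytic subtlety—everything reduces to applying \lem{w} and \eqref{trafficeq} and grouping terms—so I would organize the proof by first isolating all $c_{e,i}^2$ and $c_{s,i}^2$ contributions (which match termwise), then showing the remaining ``combinatorial'' pieces match via the identity.
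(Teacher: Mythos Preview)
Your proposal is correct and follows essentially the same approach as the paper: substitute $\theta^*=(w_{1j},\ldots,w_{j-1,j},1,0,\ldots,0)$ into $Q^*$, use the recursion $\sum_k P_{ik}\theta^*_k=w_{ij}$ from \lem{w} to collapse $\bar\xi_i(\theta^*)$, and then apply the traffic equation $\sum_i\lambda_iP_{ik}=\lambda_k-\alpha_k$ to sum the remaining pieces. The only organizational difference is that the paper rewrites the non-$c^2$ part of $\xi_i^*(\theta^*)$ as $\sum_k P_{ik}(u_k^2-u_k)+w_{ij}(1-w_{ij})$ (adding and subtracting $\sum_kP_{ik}u_k=w_{ij}$), which lands directly in the $x(1-x)$ form of \eqref{eq:sigma} and bypasses the linear identity you isolate; your route is equivalent, just with the bookkeeping arranged differently.
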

	
	\begin{proof}
		Define 
		\begin{align}\label{eq:u}
		u = (w_{1j}, \ldots, w_{j-1,j}, 1, 0, \ldots, 0 )' \in \R^J,
		\end{align}
		where prime denotes transpose.
		It follows from (\ref{eq:w}) that 
		\begin{align*}
		& \sum_{k\in \mathcal{J}}P_{ik}u_k=w_{ij}, \quad i\in \mathcal{J}.
		\end{align*}
		Recall the definition $\xi^*_i(u)$ in (\ref{eq:xi-s}).
		It follows that 
		\begin{align*}
		\xi^*_i(u) & =  c^2_{s,i}\Bigl ( -u_i + \sum_{k\in \mathcal{J}} P_{i,k} u_k  \Bigr )^2 
		+  \sum_{k\in \mathcal{J}} P_{ik} u_k^2 - \big(\sum_{k\in \mathcal{J}} P_{ik} u_k\big)^2 \\
		&=  c_{s,i}^2 (-u_i+w_{ij})^2 + \sum_{k\in  \mathcal{J}} P_{ik}(u_k^2-u_k) +  w_{ij}- w_{ij}^2.                       
		\end{align*}
		For $u$ in (\ref{eq:u}), 
		\begin{align*}
		-u_i+w_{ij} =
		\begin{cases}
		0  & i< j, \\
		-1 + w_{jj}, & i=j, \\
		w_{ij} & i>j.
		\end{cases}
		\end{align*}
		Denoting
		\begin{align*}
		b_i=\sum_{k\in  \mathcal{J}} P_{ik}(u_k^2-u_k) +  w_{ij}- w_{ij}^2,
		\end{align*}
		one has the following
		\begin{align*}
		&  \sum_{i=1}^J \lambda_i\xi^*_i(u) =\lambda_j c_{s,j}^2(1-w_{jj})^2 +  \sum_{i>j} \lambda_i c_{s,i}^2 w_{ij}^2  + \sum_{i\in \mathcal{J}} \lambda_i b_i,
		\end{align*}
		where   the last term $    \sum_{i\in \mathcal{J}} \lambda_i b_i$ is equal to 
		\begin{align*}
		&  \sum_{i\in \mathcal{J}} \lambda_i \sum_{k\in \mathcal{J}} P_{ik}(u_k^2-u_k) + \sum_{i\in \mathcal{J}}\lambda_iw_{ij}(1-w_{ij}) \\
		& = \sum_{k\in \mathcal{J}} (\lambda_k-\alpha_k) (u_k^2-u_k) + \sum_{i\in \mathcal{J}}\lambda_iw_{ij}(1-w_{ij}) \\
		& =  \sum_{k<j} (\alpha_k-\lambda_k) (w_{kj}-w_{kj}^2) + \sum_{i\in \mathcal{J}}\lambda_iw_{ij}(1-w_{ij})  \\
		&=  \sum_{k<j} \alpha_k (w_{kj}-w_{kj}^2) + \sum_{i\ge j}\lambda_iw_{ij}(1-w_{ij}).
		\end{align*}
		Following  the definition of the quadratic function $Q^*(u)$ in
		(\ref{eq:Q-s}) with $\eta^*_i(u_i)$ defined in (\ref{eq:eta-s}), one has
		\begin{align*}
		2Q^*(u) &  = \alpha_j c_{e,j}^2 +  \sum_{i< j} \alpha_i c_{e,i}^2w^2_{ij}
		+ \lambda_j c_{s,j}^2(1-w_{jj})^2 +  \sum_{i>j} \lambda_i c_{s,i}^2 w_{ij}^2 \\
		& +   \sum_{i<j} \alpha_i (w_{ij}-w_{ij}^2) + \sum_{i\ge j}\lambda_iw_{ij}(1-w_{ij}) = \sigma^2_j.
		\end{align*}
	\end{proof}

	\section{State space collapse}
	\label{sec:ssc}
In this section, we  prove Lemma \ref{lem:ssc-phi} and Lemma \ref{lem:ssc-unbound}. The latter  supersedes Lemma~\ref{lem:ssc}.

	\begin{proof}[Proof of Lemma \ref{lem:ssc-unbound}]
       Let $\theta(r)\in \Theta$ be
          a sequence satisfying (\ref{eq:thetaor}). We prove
          (\ref{eq:ssc-phi0})-(\ref{eq:ssc-phii}).  In the following,
          all $\theta$ usage should be interpreted as $\theta(r)$.  We
          first prove (\ref{eq:ssc-phi0}).  By  condition \eq{thetaor} and Taylor expansion
          (\ref{eq:eta-t-taylor}),  there exists $r_0\in (0,1)$ such that
          \begin{align*}
		\abs{\gamma_i(\theta_i, r)}\le c_1 r \text{ and }  \abs{\xi_i(\theta, r)}\le c_1 r
		\quad r\in (0, r_0), i\in \mathcal{J},
		\end{align*}
		where $c_1>0$ is some constant.
		It follows that for $r\in (0, r_0)$
		\begin{align*}
                  & \abs{ \phi^{(r)}(\theta) -{\psi}^{(r)}(\theta)} \\
		& \le
		\E\Babs{1-\exp\Big(- \sum_{i} \gamma_i(\theta_i, r) (\alpha_i R^{(r)}_{e,i}\wedge r^{-1} )
			- \sum_{i} \xi_i(\theta, r) (\mu^{(r)}_i R^{(r)}_{s,i}\wedge r^{-1}) \Big)}
		\\
		& \le e^{2J c_1} c_1 \Big( \sum_{i}\alpha_i r \E[R^{(r)}_{e,i}]  + \sum_{i}  \mu^{(r)}_i r \E[R^{(r)}_{s,i}]\Big) =o(1),
		%\label{eq:ssc-phi-p} 
		\end{align*}
		where    the second inequality follows from
		\begin{align*}
		\abs{1-e^{-x}} \le e^{\abs{x}}\abs{x}, \quad x\in \R,
		\end{align*}
		and the last equality follows from Lemma 4.5 of \cite{BravDaiMiya2017}.
		Therefore, (\ref{eq:ssc-phi0}) is proved.
		
		We next prove \eq{ssc-phii}.
		It follows that,  for $r\in (0, r_0)$,
		\begin{align*}
		& \abs{ \phi^{(r)}_j(\theta) -{\psi}^{(r)}_j(\theta)} \\
		& \le
		\E\Big[ \babs{1-\exp\big(- \sum_{i} \gamma_i(\theta, r) (\alpha_i R^{(r)}_{e,i}\wedge (1/r))
			- \sum_{i} \xi_i(\theta, r) (\mu^{(r)}_i R^{(r)}_{s,i}\wedge (1/r)) \big) } \mid Z^{(r)}_j=0\Big]\\
		& \le e^{2J c_1} c_1 \Big(
		\sum_{i} \alpha_i r \E\big [ R^{(r)}_{e,i}   \mid Z^{(r)}_j=0\big ]
		+      \sum_{i} \mu^{(r)}_i r \E\big [ R^{(r)}_{s,i} \mid Z^{(r)}_j=0\big ] \Big).
		\end{align*}
		Now,
		\begin{align*}
		&  r \E\big [ R^{(r)}_{e,i} \mid Z^{(r)}_j=0\big ]
		=
		\frac{ r \E\big [  R^{(r)}_{e,i}  1_{\{ Z^{(r)}_j=0\}}\big ]}{\Prob\{Z^{(r)}_j =0\}}
		\le
		\frac{ r \big(\E\big [ \big(  R^{(r)}_{e,i}\big)^p\big]\big)^{1/p}  \big(\Prob\{ Z^{(r)}_j=0\}\big)^{1/q}}{\Prob\{Z^{(r)}_j =0\}} \\
		& = r^{1-j/p}   \Big(\E\big [ \big(  R^{(r)}_{e,i}\big)^{(J+\delta_0)}]\Big)^{1/(J+\delta_0)} (\mu_j^{(r)})^{1/p}=o(1) \quad \text{ for } i, j\in \mathcal{J},
		\end{align*}
		where in obtaining the  inequality, we have used H\"older's inequality with $p=J+\delta_0$ ($\delta_0$ is given in \eq{moment}) and $1/p+1/q=1$, and the last equality follows from Lemma 6.6 of \cite{BravDaiMiya2024} and Assumption \ref{ass:moment}. Similarly, one can prove
		\begin{align*}
		& r \E\big [ R^{(r)}_{s,i}  \mid Z^{(r)}_j=0\big ]=o(1)
		\quad \text{ for } i, j\in \mathcal{J}.
		\end{align*}
	\end{proof}

To prove Lemma~\ref{lem:ssc-phi}, we first establish Lemma~\ref{lem:moment-ssc-b}.
Both Lemma~\ref{lem:ssc-phi} and Lemma~\ref{lem:moment-ssc-b} are consequences of the uniform moment bounds in \pro{zbound}.  
\begin{lemma}\label{lem:moment-ssc-b}
	Assume the assumptions in \pro{zbound}. Then
	\begin{align}
	% & \sup_{r\in (r_0,1)}\E\big[ r^{j}Z^{(r)}_j \big] <\infty,  \quad j\in \mathcal{J}\setminus \{J\} \label{eq:moment-ssc}\\
	& \lim_{r\to 0} \E\Big[ \sum_{\ell<j} r^{\ell+1}Z^{(r)}_\ell  \mid Z_j^{(r)}=0 \Big] =0
	\quad \text{ for } j \in  {\cal{J}}. \label{eq:moment-ssc-b}
	\end{align}
\end{lemma}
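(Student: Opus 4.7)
The plan is to reduce the conditional expectation to an unconditional one via Bayes' rule, then apply Hölder's inequality to decouple the factor $Z_\ell^{(r)}$ from the rare event $\{Z_j^{(r)} = 0\}$, and finally use the uniform moment bound from Proposition~\ref{pro:zbound} together with the explicit idle probability (\ref{eq:idle}).

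More concretely, first I would write, for each $\ell < j$,
\begin{equation*}
r^{\ell+1}\,\E\bigl[Z_\ell^{(r)}\mid Z_j^{(r)}=0\bigr]
= r^{\ell+1}\,\frac{\E\bigl[Z_\ell^{(r)}\,\mathbbm{1}(Z_j^{(r)}=0)\bigr]}{\Prob\{Z_j^{(r)}=0\}},
\end{equation*}
and use (\ref{eq:idle}) so that the denominator equals $r^j/\mu_j^{(r)}$. Next, I would apply Hölder's inequality with conjugate exponents $p = J+\epsilon_0$ (the same $\epsilon_0$ appearing in Proposition~\ref{pro:zbound}) and $q = p/(p-1)$ to bound
\begin{equation*}
\E\bigl[Z_\ell^{(r)}\,\mathbbm{1}(Z_j^{(r)}=0)\bigr]
\le \bigl(\E[(Z_\ell^{(r)})^{p}]\bigr)^{1/p}\,\bigl(\Prob\{Z_j^{(r)}=0\}\bigr)^{1/q}.
\end{equation*}
Proposition~\ref{pro:zbound} gives $(\E[(Z_\ell^{(r)})^{p}])^{1/p} \le C\,r^{-\ell}$ for some constant $C$ and all sufficiently small $r$, while $(\Prob\{Z_j^{(r)}=0\})^{1/q} = (r^j/\mu_j^{(r)})^{1/q}$.

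Combining these, the conditional expectation is dominated by
\begin{equation*}
r^{\ell+1}\cdot C\,r^{-\ell}\cdot (r^j/\mu_j^{(r)})^{1/q-1}
= C\,(\mu_j^{(r)})^{1/p}\,r^{\,1 - j/p}.
\end{equation*}
Because $\mu_j^{(r)}\to\lambda_j$ is bounded and $p = J+\epsilon_0 > J \ge j$, the exponent $1 - j/p$ is strictly positive, so this bound vanishes as $r\to 0$. Summing the resulting $o(1)$ terms over the finitely many $\ell < j$ yields (\ref{eq:moment-ssc-b}).

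I do not expect a serious obstacle here: the only slightly delicate point is choosing the Hölder exponent so that the power of $r$ one loses in bounding $(\E[(Z_\ell^{(r)})^{p}])^{1/p}$ by $r^{-\ell}$ is strictly less than what one gains from $r^{\ell+1}$ and from the factor $(\Prob\{Z_j^{(r)}=0\})^{1/q-1} = (r^j)^{-1/p}\mu_j^{(r)1/p}$. The choice $p = J+\epsilon_0$ exactly uses the full strength of Proposition~\ref{pro:zbound} and gives positive exponent $1 - j/p$ for every $j \le J$, so the margin $\epsilon_0$ is what makes the argument work.
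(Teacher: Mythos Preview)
Your proposal is correct and follows essentially the same route as the paper: rewrite the conditional expectation via Bayes' rule, apply H\"older's inequality, and invoke the uniform moment bound of Proposition~\ref{pro:zbound} together with the idle probability~(\ref{eq:idle}). The only cosmetic difference is the H\"older exponent: the paper takes $p=j+\epsilon_0$ (tailored to the station $j$), whereas you take $p=J+\epsilon_0$; both choices yield a strictly positive residual power of $r$ and hence the desired $o(1)$.
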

\begin{proof}
	For each $j\in \mathcal{J}$ and $\ell< j$, 
	by choosing $p=j+\epsilon_0$, $q=(j+\epsilon_0)/(j+\epsilon_0-1)$ and employing the H\"older's inequality
	\begin{align*}
	\E\Big[ r^\ell Z^{(r)}_\ell 1_{\{ Z_j^{(r)}=0\}}\Big] \le
	\Big(\E\big[ (r^\ell Z^{(r)}_\ell)^p\big]\Big)^{1/p}\bigl (  \Prob\{ Z_j^{(r)}=0\}\bigr )^{1/q},
	\end{align*}
	one can show that
	\begin{align*}
	r \E\Big[ r^\ell Z^{(r)}_\ell \mid Z_j^{(r)}=0\Big] 
	& \le r    \Big(\E\big[ (r^\ell Z^{(r)}_\ell)^p\big]\Big)^{1/p}\bigl (  \Prob\{ Z_j^{(r)}=0\}\bigr )^{1/q-1}\\
	&=  \Big(\E\big[ (r^\ell Z^{(r)}_\ell)^p\big]\Big)^{1/p} (\mu^{(r)}_j)^{1/(j+\epsilon_0)}
	r^{\frac{\epsilon_0}{j+\epsilon_0}} \to 0,
	\end{align*}
	which proves  (\ref{eq:moment-ssc-b}).
\end{proof}       
 
% \begin{lemma} \label{lem:ssc-moment}
%   Fix  a vector $\eta\in \R^J_-$ and a station 
%   $j\in \mathcal{J}$.  Let $\theta=\theta^{(r)}$ \red{and $\tilde \theta=\tilde{\theta}^{(r)}$ be defined in
%   (\ref{eq:w1})-(\ref{eq:w3}) and   (\ref{eq:wt1})-(\ref{eq:wt5}), respectively.}
%   As $r\to 0$,
%   \begin{align}
%     & \phi^{(r)}(\theta^{(r)}) -\phi^{(r)}(0,\ldots, 0, r^j\eta_j, \ldots, r^J\eta_J) =o(1), \label{eq:ssc1a}\\
%    &\red{    \phi^{(r)}(\tilde{\theta}^{(r)}) -\phi^{(r)}(0,\ldots, 0, 0, r^{j+1}\eta_{j+1}, \ldots, r^J\eta_J) =o(1),} \label{eq:ssc3a}\\
%     & \phi^{(r)}_j(\theta^{(r)}) -\phi^{(r)}_j(0,\ldots,  0, r^{j}\eta_{j}, \ldots, r^J\eta_J) =o(1), \label{eq:ssc2a} \\
%        & \red{\phi^{(r)}_j(\tilde{\theta}^{(r)}) -\phi^{(r)}_j(0,\ldots, 0,  0, r^{j+1}\eta_{j+1}, \ldots, r^J\eta_J) =o(1).} \label{eq:ssc4a}      
%   \end{align}
% \end{lemma}

\begin{proof}[Proof of Lemma~\ref{lem:ssc-phi}] In the proof, we drop the superscript $r$ from $\theta^{(r)}$.
  We first prove (\ref{eq:ssc1}). Note that $\theta_k\le 0$
          for $k< j$ and  $\theta_j-r^j\eta_j\le 0$. 
		\begin{align*}
		&\phi^{(r)}(0,\ldots, 0, r^j\eta_j, \ldots, r^J\eta_J)-\phi^{(r)}(\theta)
		= \E\Big[ \Big(1-e^{\sum_{k< j}\theta_k Z^{(r)}_k +(\theta_j-r^j\eta_j)Z^{(r)}_j}\Big)e^{\sum_{k\ge  j}\eta_k r^k Z^{(r)}_k} \Big] \\
		& \le  \E\Big[ 1-e^{\sum_{k< j}\theta_k Z^{(r)}_k +(\theta_j-r^j\eta_j)Z^{(r)}_j} \Big] \le
		\E\Big[ \sum_{k< j}\abs{\theta_k} Z^{(r)}_k +\abs{\theta_j-r^j\eta_j}Z^{(r)}_j  \Big] \\
		& = \sum_{k< j}\frac{\abs{\theta_k}}{r^{k}}     \E\Big[ r^{k} Z^{(r)}_k \Big]
		+\frac{\abs{\theta_j-r^j\eta_j}}{r^{j}} \E\Big[ r^j Z^{(r)}_j \Big] =o(1),
		\end{align*}
		where the second inequality follows from $1-e^{-x}\le x$ for $x\ge 0$ and the last equality is due to $\theta_k=o(r^{k})$ for $k<j$,
		$\theta_j-r^j\eta_j =o(r^{j})$, and \eq{Jmoment}.
		
	The proof of  (\ref{eq:ssc3}) follows from the following relationships
			\begin{align*}
				&\phi^{(r)}(0,\ldots, 0, r^{j+1}\eta_{j+1}, \ldots, r^J\eta_J)-\phi^{(r)}(\tilde{\theta})
				= \E\Big[ \Big(1-e^{\sum_{k\le j}\tilde{\theta}_k Z^{(r)}_k}\Big)e^{\sum_{k>  j}\eta_k r^k Z^{(r)}_k} \Big] \\
				& \le  \E\Big[ 1-e^{\sum_{k \le  j}\tilde{\theta}_k Z^{(r)}_k} \Big] \le
				\E\Big[ \sum_{k\le j}\abs{\tilde{\theta}_k} Z^{(r)}_k \Big] =
				\sum_{k\le j}\frac{\abs{\tilde{\theta}_k}}{r^{k}}     \E\Big[ r^{k} Z^{(r)}_k \Big] =o(1),
			\end{align*}
			where the last equality is due to $\tilde{\theta}_k=O(r^{j+1/2})=o(r^{k})$ for $k\le j$
			and \eq{Jmoment}.		
			
		We next prove  (\ref{eq:ssc2}).
		\begin{align*}
		&\phi^{(r)}_j(0,\ldots, 0, r^j\eta_j, \ldots, r^J\eta_J)-\phi^{(r)}_j(\theta)
		= \E\Big[ \Big(1-e^{\sum_{k< j}\theta_k Z^{(r)}_k}\Big)e^{\sum_{k\ge  j}\eta_k r^k Z^{(r)}_k}\mid Z^{(r)}_j=0 \Big] \\
		& \le  \E\Big[ \Big(1-e^{\sum_{k< j}\theta_k Z^{(r)}_k}\Big) \mid Z^{(r)}_j=0  \Big] \le
		\E\Big[ \sum_{k< j}\abs{\theta_k} Z^{(r)}_k \mid Z^{(r)}_j=0 \Big] \\
		& =
		\sum_{k< j}\frac{\abs{\theta_k}}{r^{k+1}}     \E\Big[ r^{k+1} Z^{(r)}_k \mid Z^{(r)}_j=0 \Big] =o(1),
		\end{align*}
		where the last equality is due to $\abs{\theta_k}\le
              c_k r^j$ for some constant $c_k>0$ as $r\to 0$ for $k<j$ and (\ref{eq:moment-ssc-b}).
              
The proof of  (\ref{eq:ssc4}) follows from the following relationships
	\begin{align*}
		&\phi^{(r)}_j(0,\ldots, 0, r^{j+1}\eta_{j+1}, \ldots, r^J\eta_J)-\phi^{(r)}_j(\tilde{\theta})
		= \E\Big[ \Big(1-e^{\sum_{k< j}\tilde{\theta}_k Z^{(r)}_k}\Big)e^{\sum_{k>  j}\eta_k r^k Z^{(r)}_k} \mid Z^{(r)}_j=0 \Big] \\
		& \le  \E\Big[ \Big(1-e^{\sum_{k< j}\tilde{\theta}_k Z^{(r)}_k} \Big) \mid Z^{(r)}_j=0 \Big] \le
		\E\Big[ \sum_{k< j}\abs{\tilde{\theta}_k} Z^{(r)}_k  \mid Z^{(r)}_j=0 \Big] \\
		& =
		\sum_{k< j}\frac{\abs{\tilde{\theta}_k}}{r^{k+1}}     \E\Big[ r^{k+1} Z^{(r)}_k  \mid Z^{(r)}_j=0\Big] =o(1),
	\end{align*}
	where the last equality is due to $\tilde{\theta}_k=O(r^{j+1/2})=o(r^{k+1})$ for $k< j$ and (\ref{eq:moment-ssc-b}).
	\end{proof}

        \section{Taylor expansions}
        \label{app:C}
        In this section, we prove Lemma~\ref{lem:taylor-psi}. The proof
        also covers the proof of Lemma~\ref{lem:taylor} by choosing
        $g^{(r)}(u)$ in \eq{gr} to be $u$, independent of $r$.
	\begin{proof}[Proof of Lemma~\ref{lem:taylor-psi}]
 For $r\in (0,1)$, define
          \begin{align}\label{eq:gr}
        g^{(r)}(u)=u \wedge r^{-1} \quad  \text{ for $u\in \R_+$}.    
          \end{align}
                    Fix $i\in \mathcal{J}$. 
	 Let
		\begin{align*}
		a^{(r)} = 1/\E\big[g^{(r)}\big(T_{e,i}(1)\big)\big], \quad        b^{(r)} = \E\big[ \big (g^{(r)}(T_{e,i}(1))\big)^2\big], \quad \sigma^2_{(r)} = b^{(r)}-(1/a^{(r)})^2.
		\end{align*}
		For $y\in \R$, define 
		\begin{align*} 
		c^{(r)}_{e,i}(y) = \gamma_i(y, r) -y a^{(r)} - \frac{1}{2} y^2 (a^{(r)})^3 \sigma^2_{(r)}.
		\end{align*}
		Lemma 4.2 of \cite{BravDaiMiya2017} proved that there exists $K>0$,
		\begin{align*}
		\lim_{r\to 0} \sup_{0<\abs{y}\le K}  \frac{c^{(r)}_{e,i}(ry)}{r^2y^2} = 0,
		\end{align*}
		which implies that for any $\theta_i(r)\in \R$ satisfying \eq{thetaor}, 
		\begin{align}
		\label{eq:ce}
		c^{(r)}_{e,i}(\theta_i(r)) = o((\theta_i(r))^2)  \quad \text{ as  }  r\to 0.
		\end{align}
		Note that 
		\begin{align*}
		& \E[T_{e,i}(1)] -       \E[g^{(r)}(T_{e,i}(1))] = \E[(T_{e,i}(1) -r^{-1}) \mathbbm{1}(T_{e,i}(1)> r^{-1})] \\
		& \le r^{J+\delta_0}  \E([T_{e,i}(1)^{J+1+\delta_0} \mathbbm{1}(T_{e,i}(1)> r^{-1})]   = o(r^{J+\delta_0})=o(r^J), \\
		& \E[(T_{e,i}(1))^2] -       \E[(g^{(r)}(T_{e,i}(1)))^2] = \E\big[(T^2_{e,i}(1)-r^{-2}) \mathbbm{1}(T_{e,i}(1)>r^{-1})\big]\\
		& \le r^{J-1+\delta_0}  \E([T_{e,i}(1)^{J+1+\delta_0} \mathbbm{1}(T_{e,i}(1)> r^{-1})]   = o(r^{J-1+\delta_0})=o(r^{J-1}).
		\end{align*}
	 where the first inequality is by applying $\frac{1}{r^{J+\delta_0}}\mathbbm{1}(T_{e,i}(1)> r^{-1})\leq \left(T_{e,i}(1)\right)^{J+\delta_0} \mathbbm{1}(T_{e,i}(1)> r^{-1})$, and similar technique applies on the second inequality. 
		Therefore, using the facts that $\E[T_{e,i}(1)]=1$ and $c^2_{e,i}={\rm Var}(T_{e,i}(1))$,
		\begin{align*}
		&  a^{(r)} = 1/  \E[g^{(r)}(T_{e,i}(1))] = 1/(1+o(r^{J})) = 1+o(r^{J}), \\
		& b^{(r)} =  \E[T_{e,i}(1)]^2 + o(r^{J-1}), \\
		& \sigma^2_{(r)} = c^2_{e,i} + o(r^{J-1}), \\
		& (a^{(r)})^3 \sigma^2_{(r)} = c^2_{e,i} + o(r^{J-1}).
		\end{align*}
		Finally, (\ref{eq:eta-t-taylor}) follows from (\ref{eq:ce}) and
		\begin{align*}
		& a^{(r)}\theta_i - \theta_i = \theta_io(r^{J})=o(r^{J}\theta_i),\\ 
		&  (a^{(r)})^3 \sigma^2_{(r)} \theta^2_i - c^2_{e,i}\theta^2_i = \theta^2_io(r^{J-1})=o(r^{J-1}\theta^2_i)=o(\theta_i^2).
		\end{align*}
		Expansion    (\ref{eq:xi-t-taylor}) can be proved similarly, again using Lemma~4.2 of \cite{BravDaiMiya2017}.
		
              \end{proof}        
	\addtocontents{toc}{\endgroup}
\end{appendix}	

\end{document}